\documentclass[12pt]{article}

\usepackage{fullpage}
\usepackage{amsmath, amsthm, amsfonts, amssymb, amstext, mathrsfs, enumerate}
\usepackage{mathtools}
\usepackage{graphicx, ragged2e, lscape, framed, xcolor}
\usepackage{subfiles}
\usepackage{booktabs}

\theoremstyle{plain}
\newtheorem{theorem}{Theorem}[section]
\newtheorem{lemma}[theorem]{Lemma}

\newtheorem{conjecture}[theorem]{Conjecture}
\newtheorem{corollary}[theorem]{Corollary}

\newtheorem*{remark}{Remark}

\newtheorem{example}{Example}[section]

\numberwithin{equation}{section}
\allowdisplaybreaks

\newcommand{\affl}[3]{%
	\noindent #1,
	\textsc{#3}\\
	Email: \texttt{#2}\\[1.5pt]
}

\usepackage[pagebackref]{hyperref}
\hypersetup{
	colorlinks=true,
	urlcolor=purple,
	linkcolor=purple,
	citecolor=purple,
}

\DeclareMathOperator{\spec}{spec}

\title{\textbf{Spectral Radius, Vertex Deletion, and Chromatic Number of Signed Graphs}}

\author{Abhay Jayarajan, M. Rajesh Kannan, Priti Prasanna Mondal, \\ Shivaramakrishna Pragada}

\begin{document}
	\maketitle{}
	
	\begin{abstract}
		A signed graph $\Sigma=(G,\sigma)$ is a graph $G$ with edges given signs $1$ or $-1$ defined by the function $\sigma$. The adjacency matrix of $\Sigma$ is defined as per these signs. The relation between the largest eigenvalue of $G$ and $G-v$ has been studied in recent years, where $G-v$ is the graph obtained from $G$ by deleting the vertex $v$. In 2020, Sun and Das proved that the difference of the squares of the largest eigenvalues of the graphs $G$ and $G-v$ is bounded above by $2d(v)-1$ where $d(v)$ is the degree of $v$. A similar result need not be true for the largest eigenvalue of signed graphs. In this paper, we prove that the result is valid for the spectral radius of signed graphs. 
		
		On the other hand, the signed graph version of Hoffman's chromatic number bound was proved by Wang et al. in 2021. They also discussed the difficulty in proving the extended version encompassing all eigenvalues of $\Sigma$ as was done for unsigned graphs by Wocjan and Elphick. We note down a consequence of Wocjan and Elphick's lower bound for the chromatic number in terms of all the eigenvalues of $\Sigma_+$; all the eigenvalues of $\Sigma$ and $\Sigma_-$, where $\Sigma_+$ (resp. $\Sigma_-$) is the spanning subgraph induced by the positive (resp. negative) edges. We give examples where the result fails even under various restrictions on the signed graph. Finally, we improve an upper bound for the $k$-th power of the largest eigenvalue given by Stani\'{c} in terms of walks in signed graphs and give lower bounds for the least eigenvalue in terms of various parameters of $\Sigma_+$ and $\Sigma_-$.
	\end{abstract}
	
	{\bf AMS Subject Classification(2010):} 05C22; 05C50.
	
	\textbf{Keywords:} Signed graph, Spectral radius, Majorization, Chromatic number.
	
	\section{Introduction}
	A signed graph is a graph $\Sigma=(G,\sigma)$, where $G=(V, E)$ is the underlying graph, and $\sigma: E\rightarrow \{-1, +1\}$ is the sign function. Let $e_{ij}$ denote the edge between the vertices $v_{i}$ and $v_{j}$ of $G.$ Let $E^+(\Sigma)=\{e \in E: \sigma(e)=+1\}$ and $E^-(\Sigma)=\{e \in E: \sigma(e)=-1\}$. Let $\Sigma_+$ be the subgraph of $\Sigma$ with $V(\Sigma_+)=V(\Sigma),~ E(\Sigma_+)=E^+(\Sigma)$ and $\Sigma_-$ be the subgraph of $\Sigma$ with $V(\Sigma_-)=V(\Sigma),~ E(\Sigma_-)=E^-(\Sigma)$. Let $C$ be a cycle of signed graph $\Sigma$. the \emph{sign of a cycle} $C$ is denoted by $\sigma(C)$ and defined by $\sigma(C)=\prod_{e_{ij}\in C}\sigma(e_{ij}).$  A cycle is said to be \emph{positive} (respectively, \emph{negative}) if $\sigma(C)=+1$ (respectively, $\sigma(C)=-1$). A signed graph is said to be \emph{balanced} if all its cycles are positive. For a vertex $v \in V(G)$ define $=\{u\in V(G):\sigma(uv)=-1\} ~~~\mbox{and}~~~ N^{+}(v)=\{u\in V(G):\sigma(uv)=+1\}$. Denote by $N(v)$ the set of all vertices of $V(G)$ that are adjacent to $v$. Thus $N(v) = N^{+}(v) \cup N^{-}(v)$. Let $d(v) = \vert N(v) \vert$.
	The \emph{adjacency matrix} of a signed graph $\Sigma=(G,\sigma)$ denoted by  $A(\Sigma)=[a_{ij}^{\sigma}],$ is an $n \times n$ symmetric matrix, defined by:
	\[a_{ij}^{\sigma}=\begin{cases} 
		\sigma(e_{ij})  & \mbox{if}~ v_{i}\sim v_{j} \\
		0  & \mbox{otherwise}. 
	\end{cases}\]
	We denote by $\lambda_{1}(\Sigma)\geq\lambda_{2}(\Sigma)\geq \cdots\geq \lambda_{n}(\Sigma)$ the eigenvalues of $A(\Sigma)$. Let $\rho(\Sigma)=\max\{\lambda_{1}(\Sigma), -\lambda_{n}(\Sigma)\}$ denote the spectral radius of $A(\Sigma)$. Let $(G,1)$ or $(G,+)$ be a signed graph with the signs of all edges being positive. Then the eigenvalues of $(G,1)$ are ordered as $\lambda_{1}(G)\geq\lambda_{2}(G)\geq \cdots\geq \lambda_{n}(G)$ and the spectral radius is denoted by $\rho(G)$. We similarly define $(G,-1)$ or $(G,-)$. The graph $-\Sigma$ is the signed graph obtained from $\Sigma$ by reversing the signs on all the edges of $\Sigma$.
	
	A function $\eta: V(G)\rightarrow \{-1, +1\}$ is called a \emph{switching function}. Two signed graphs $\Sigma_{1}=(G, \sigma_{1})$ and $\Sigma_{2}=(G, \sigma_{2})$ are \emph{switching equivalent,} written as $\Sigma_{1}\sim \Sigma_{2},$ if there is a switching function $\eta: V(G)\rightarrow \{-1, +1\}$ such that \[\sigma_{2}(e_{ij})=\eta(v_{i})^{-1} \sigma_{1}(e_{ij})\eta(v_{j}).\]
	Suppose $S=\{v\in V(G): \eta(v)=1\}$; then one can see that $\Sigma_2$ turns out to be the graph obtained from $\Sigma_1$ by changing signs of all the edges in $\Sigma_1$ with one endpoint in $S$ and the other in its complement. In this case, we say that $\Sigma_2$ is the graph obtained by switching $\Sigma_1$ with respect to $S$. It is well known that switching equivalence preserves the spectrum of $\Sigma$.

	The largest eigenvalue of graphs has been extensively studied in the last few decades. Brualdi and Hoffman \cite{brualdi1985spectral} showed that for graphs with $m=\frac{k(k-1)}{2}$, $\lambda_1(G) \leq k-1$. This bound was improved by Stanley \cite{stanley1987bound} who found that $\lambda_1(G) \leq \frac{-1+\sqrt{1+8m}}{2}$. Later, Hong \cite{yuan1988bound} improved the bound for connected graphs and showed that $\lambda_1(G) \leq \sqrt{2m-n+1}$.  In 2019, Guo et al.\cite{guo2019sharp} gave another upper bound for the largest eigenvalue. In the paper, the authors proved that if $\frac{(k-2)(k-3)}{2}\leq m-n \leq \frac{k(k-3)}{2}$ where $3 \leq k \leq n$, then $\lambda_1(G) \leq \sqrt{2m-n-k+\frac{5}{2}+\sqrt{2m-2n+\frac{9}{4}}}$ with equality if and only if $G$ is a complete graph or $K_4-e$ which is the graph obtained from $K_4$ by removing an edge. In the same paper, the authors proposed a conjecture regarding the effect on the largest eigenvalue of a graph on removing a vertex and the edges that are incident to it. The authors conjectured that for any graph $G$ and a vertex $v \in V (G)$, \[\lambda_{1}(G) \leq \sqrt{\lambda_{1}^2(G-v)+2d(v)-1}.\] In 2020, Sun and Das \cite{sun2020conjecture} confirmed the conjecture and characterised all connected graphs for which this bound is attained. A short proof of this result was given by Jin et al.\cite{jin2024upper}, Liu and Ning \cite{MR5098803}. A related work for signed graphs has been done in \cite{xie2026index}. In \cite{lan2023remarks}, Lan et al. proved an analogue of this result in the case of signed graphs with some restrictions on the choice of vertex. However, this result is not generally true for signed graphs.  One of the main objectives of this paper is to show that the above bound for the largest eigenvalue remains valid when the largest eigenvalue is replaced by the spectral radius of a signed graph. In the route to proving this result, we also establish the equality conditions for the spectral radius inequality of block matrices by Jin et al. \cite{jin2024upper} in a special case.

	A proper $k$-coloring of an unsigned graph is a mapping $t: V(G) \rightarrow \{1, \dots, k \}$ such that no two adjacent vertices are mapped to the same integer. The least $k$ for which $G$ admits a proper $k$-coloring is called the chromatic number of $G$, denoted by $\chi(G)$. For $k\geq 1,$ let $M_{k}=\{ \pm 1, \pm 2, \cdots, \pm k/2\}$ if $k$ is even and $M_{k}=\{0, \pm 1, \pm 2, \cdots, \pm (k-1)/2\}$ if $k$ is odd. A (proper) $k$-coloring of a signed graph $\Sigma$ is a mapping $c: V(\Sigma) \rightarrow M_{k}$ such that $c(u)\neq \sigma(uv)c(v)$ for every edge $uv \in E(\Sigma)$. The signed chromatic number of $\Sigma$, denoted by $\chi(\Sigma)$, is the minimum number $k$ for which $\Sigma$ admits a $k$-coloring\cite{mavcajova2014chromatic}. The \emph{Hoffman lower bound} of chromatic number \cite{hoffman2003eigenvalues} $$\chi(G) \geq 1-\dfrac{\lambda_1(G)}{\lambda_n(G)}$$ is a well-known result in spectral graph theory. A generalization of \emph{Hoffman's lower bound} for chromatic number for an unsigned graph was given by Wocjan and Elphick \cite{wocjan2013new}, which comprised all the eigenvalues of the graph.
	
	Recently, Wang et al. \cite{wang2021eigenvalues} studied the signed version of \emph{Hoffman's lower bound} for chromatic number and discussed the difficulty of Wocjan and Elphick's bound for signed graphs. In this paper, in Section \ref{section_chromatic}, we give an analogous bound for signed graphs that comprises all the eigenvalues of the $\Sigma_+$; and all eigenvalues of signed graph as well as the eigenvalues of $\Sigma_{-}$, where, as defined earlier, $\Sigma_{+}$ (resp $\Sigma_{-}$)  is the spanning subgraph induced by the positive (resp negative) edges. The counterexample for Wocjan-Elphick type bound in signed graphs given by Wang et al. has $-\lambda_n > \lambda_1$. A natural question to pose is whether this $\lambda_1$ attaining the spectral radius gives a good strength for the signed graphs for the bound to hold. It turns out that even $\lambda_1 > \vert \lambda_n \vert$ in addition to connectedness of signed graphs cannot necessarily make a signed graph to force the Wocjan-Elphick type bound hold. We give examples for these claims in this section.\\
	\newline
	In 2019, Stani\'{c} \cite{Stanic2019bounding} obtained an upper bound for the square of the largest eigenvalue. We recover the bound as a consequence of a norm inequality. Stani\'{c} also gave a bound for the $k$th power of the largest adjacency eigenvalue of a signed graph in  terms of the number of positive and negative walks of length $k$ starting at a vertex. We give an improved bound in Section \ref{Signed_Adjacency}. In \cite{Stanic2018perturbations}, Stani\'{c} gave a tool to obtain
	lower bounds for the largest eigenvalue of signed graph in terms of various parameters
	of $\Sigma_+$ and $\Sigma_-$. Using a  similar strategy, we give lower bounds for the least eigenvalue of
	signed graphs and illustrate an example where the extremal graphs can be obtained.
	
	
	\section{Spectral Radius of Block Matrices}
	
	Let $A$ be a symmetric matrix partitioned as
	\[
	A=
	\begin{pmatrix}
		0 & A_{12}\\
		A_{21} & A_{22}
	\end{pmatrix}.
	\]
	Set
	\[
	\rho_2=\rho(A_{22}),\qquad
	\mu=\rho(A_{12}A_{22}A_{21}),\qquad
	\nu=\rho(A_{12}A_{21})
	\]
	where $\rho(Z)$ stands for the spectral radius of a matrix $Z$ which is the maximum of the moduli of the eigenvalues of $Z$.
	
	Consider the polynomial
	\[
	g(x)=(x-\nu)^2(\rho_2^2+x)-\mu^2.
	\]
	Since $g'(x)>0$ for all $x>\nu$, the function $g$ is strictly increasing on $[\nu,\infty)$. Furthermore, $g(\nu)=-\mu^2\le 0.$
	Hence, if $\theta^*$ denotes the largest real root of $g$, then $\theta^*\ge \nu$. Moreover, $\theta^*=\nu$ if and only if $\mu=0.$
	
	\begin{lemma}\label{Spec_Rad_Sym1}\emph{\cite{jin2024upper}}
		Let $A$ be a symmetric matrix partitioned as
		\[
		A=
		\begin{pmatrix}
			0 & A_{12}\\
			A_{21} & A_{22}
		\end{pmatrix}.
		\]
		Then
		\[
		\rho(A)^2\le \rho_2^2+\theta^*,
		\]
		where $\rho_2=\rho(A_{22})$, $\mu=\rho(A_{12}A_{22}A_{21})$, $\nu=\rho(A_{12}A_{21})$, and $\theta^*$ is the largest real root of
		\[
		g(x)=(x-\nu)^2(\rho_2^2+x)-\mu^2.
		\]
	\end{lemma}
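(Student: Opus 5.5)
We plan to work with an eigenvector of $A$ for an eigenvalue $\lambda$ with $|\lambda|=\rho(A)$, and eliminate the first block. Since $A$ is symmetric, $A_{21}=A_{12}^T$, and there is a real unit vector $(x,y)$ with
\[
A_{12}y=\lambda x,\qquad A_{21}x+A_{22}y=\lambda y .
\]
If $\rho(A)^2\le \rho_2^2+\nu$ we are done, because $\theta^*\ge\nu$. So we may assume $\lambda^2>\rho_2^2+\nu$. The aim is then to show $g(\lambda^2-\rho_2^2)\le 0$. Because $g$ is increasing on $[\nu,\infty)$ and $\theta^*$ is its largest root, this gives $\lambda^2-\rho_2^2\le\theta^*$.

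\textbf{Step 1 (squaring).} Squaring $A$ gives
\[
A^2=\begin{pmatrix} A_{12}A_{21} & A_{12}A_{22}\\ A_{22}A_{21} & A_{21}A_{12}+A_{22}^2\end{pmatrix},
\]
and $(x,y)$ is an eigenvector of $A^2$ with eigenvalue $\lambda^2$. Write $B=A_{12}A_{21}$ (positive semidefinite, $\|B\|=\nu$) and $C=A_{12}A_{22}$. Note that $CC^T=A_{12}A_{22}^2A_{21}$. Then
\[
\lambda^2x=Bx+Cy,\qquad \lambda^2y=C^Tx+(A_{21}A_{12}+A_{22}^2)y .
\]

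\textbf{Step 2 (Rayleigh quotient with the block structure).} Let $\alpha=\|x\|^2$ and $\beta=\|y\|^2$, so $\alpha+\beta=1$. From the first equation, $(\lambda^2 I-B)x=Cy$. Since $\lambda^2>\nu\ge\|B\|$, the matrix $\lambda^2I-B$ is invertible and $x=(\lambda^2I-B)^{-1}Cy$. Substituting into the second equation, $y$ is an eigenvector with eigenvalue $\lambda^2$ of
\[
M(\lambda^2)=A_{21}A_{12}+A_{22}^2+C^T(\lambda^2I-B)^{-1}C .
\]
This reduces the problem to bounding $\|M(\lambda^2)\|$, i.e. to proving $\lambda^2\le\|M(\lambda^2)\|$ is incompatible with $g(\lambda^2-\rho_2^2)>0$.

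\textbf{Step 3 (the key estimate, main obstacle).} The difficulty is that the cross term must be controlled by $\mu=\rho(A_{12}A_{22}A_{21})$, not by the cruder $\rho(CC^T)$. So I would not bound each block separately. Instead I would use the original equations directly. Taking inner products gives $\lambda\alpha=x^TA_{12}y$ and $\lambda\beta=y^TA_{21}x+y^TA_{22}y$. Using $A_{12}y=\lambda x$ and $x^TA_{12}A_{22}A_{21}x$, we get
\[
\lambda^2 x^TBx=\|A_{21}x\|^2\lambda^2,
\]
and after substitution
\[
\lambda^3\alpha=\ldots=x^TA_{12}A_{22}A_{21}x+\ldots .
\]
Concretely, set $u=A_{21}x$ and $t=\|u\|^2\le\nu\alpha$. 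Eliminating $y$ through $(\lambda I-A_{22})y=u$ (invertible, since $|\lambda|>\rho_2$), we get
\[
\lambda^2\alpha=u^T\lambda(\lambda I-A_{22})^{-1}u=u^T\lambda^2(\lambda^2I-A_{22}^2)^{-1}u+u^T\lambda A_{22}(\lambda^2I-A_{22}^2)^{-1}u .
\]
The even part is at most $\dfrac{\lambda^2 t}{\lambda^2-\rho_2^2}$. For the odd part, I would bound it by $\mu\alpha/(\lambda^2-\rho_2^2)$-type quantities, using $u=A_{21}x$ so that the form is $x^TA_{12}[\cdot]A_{21}x$ and $\mu$ enters. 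This step, making the odd term genuinely controlled by $\mu$ rather than by $\rho_2\nu$, is where I expect the real work. One natural attempt is to expand $(\lambda^2I-A_{22}^2)^{-1}$ in powers of $A_{22}^2/\lambda^2$ and exploit the positive semidefiniteness of the even powers.

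\textbf{Step 4 (conclusion).} Combining these bounds yields an inequality of the form $(s-\nu)^2(\rho_2^2+s)\le\mu^2$ with $s=\lambda^2-\rho_2^2$, i.e. $g(s)\le0$, and hence $s\le\theta^*$. This is exactly $\rho(A)^2\le\rho_2^2+\theta^*$. Since the result is quoted from \cite{jin2024upper}, one may alternatively simply cite it; the plan above is how I would reconstruct it.
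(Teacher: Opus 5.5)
\textbf{There is a genuine gap in Step~3.} Your setup is sound up to the identity $\lambda\alpha=u^T(\lambda I-A_{22})^{-1}u$ with $u=A_{21}x$. You should first replace $A$ by $-A$ if needed so that $\lambda=\rho(A)>0$; this leaves $\rho_2,\nu,\mu$ unchanged. You should also note $\alpha>0$, since $x=0$ would make $\lambda$ an eigenvalue of $A_{22}$.

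Step~3 is not just unfinished: the even/odd split cannot be completed. Once the even part is bounded by $\lambda^2t/\theta$, with $\theta=\lambda^2-\rho_2^2$ and $t\le\nu\alpha$, you would need the odd part $\lambda u^TA_{22}(\lambda^2I-A_{22}^2)^{-1}u$ to be at most $\lambda\mu\alpha/\theta$. That fails even at a genuine top eigenpair with $\lambda^2>\rho_2^2+\nu$.

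For instance, take $A_{22}=\mathrm{diag}(2,-1)$ and $A_{12}=(1,\tfrac12)$. Then $\rho_2=2$, $\nu=\tfrac54$, $\mu=\tfrac74$ and $\rho(A)=\lambda\approx2.425$, so $\lambda^2>\rho_2^2+\nu$. Yet the odd part equals $\lambda\alpha\bigl(\tfrac{2}{\lambda^2-4}-\tfrac{1}{4(\lambda^2-1)}\bigr)\approx1.01\,\lambda\alpha$, while $\lambda\mu\alpha/\theta\approx0.93\,\lambda\alpha$. The culprit is any eigenvalue $s\in(-\rho_2,0)$ of $A_{22}$ on which $u$ has weight: it contributes $s/(\lambda^2-s^2)>s/\theta$. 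Your power-series idea fails for the same reason. Only the first odd moment $u^TA_{22}u$ is controlled by $\mu$; the higher moments $u^TA_{22}^{2k+1}u$ are not.

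\textbf{The fix: keep the resolvent whole.} For $|s|\le\rho_2<\lambda$ one has $\frac{1}{\lambda-s}\le\frac{\lambda+s}{\lambda^2-\rho_2^2}$; clearing the positive denominators, this is just $s^2\le\rho_2^2$. Hence $(\lambda I-A_{22})^{-1}\preceq\theta^{-1}(\lambda I+A_{22})$, and
\[
\lambda\alpha=u^T(\lambda I-A_{22})^{-1}u\le\frac{\lambda\|u\|^2+u^TA_{22}u}{\theta}=\frac{\lambda\,x^TA_{12}A_{21}x+x^TA_{12}A_{22}A_{21}x}{\theta}\le\frac{(\lambda\nu+\mu)\alpha}{\theta}.
\]
So $\lambda(\theta-\nu)\le\mu$. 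Squaring, using $\theta>\nu$, gives $(\theta-\nu)^2(\rho_2^2+\theta)\le\mu^2$, i.e.\ $g(\theta)\le0$, whence $\theta\le\theta^*$.

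Componentwise, your Step~3 bound on the even part discards a slack of $\lambda(\rho_2^2-s^2)/(\theta(\lambda^2-s^2))$. When $s<0$ this is exactly what pays for the odd-part excess $|s|(\rho_2^2-s^2)/(\theta(\lambda^2-s^2))$, because $\lambda>|s|$.

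The paper does not reprove this lemma; it is quoted from \cite{jin2024upper}. However, this is the same inequality the paper uses, for a vector block, in the proof of Theorem~\ref{lem:outer}. Your Step~2 detour through $M(\lambda^2)$ and the unfinished $\lambda^3\alpha$ computation are not needed.
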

	Jin et al. \cite{jin2024upper} discussed the equality case for $\theta^* > \nu$ in case of nonnegative matrices. We discuss vertex deletion in graphs in next section and hence we are particularly interested in the case when the block $A_{21}$ is a vector. Therefore, let $A$ be partitioned as follows:
	\[
	A=
	\begin{pmatrix}
		0 & a^T\\
		a & B
	\end{pmatrix}.
	\]
	Set
	\[
	\rho_{2}:=\rho(B),\qquad
	\nu:=a^{T}a,\qquad
	m:=a^{T}Ba,\qquad
	\mu:=|m|
	\]
	
	Let $\{w_i\}_{i=1}^{n-1}$ be an orthonormal basis of $\mathbb{R}^{n-1}$ consisting of eigenvectors of $B$. Then
	\[
	B=\sum_{i=1}^{n-1} \mu_i w_iw_i^T
	\]
	and write
	\[
	a=\sum_{i=1}^{n-1} c_iw_i,\qquad
	c_i=w_i^Ta,\qquad
	\nu=\sum_{i=1}^{n-1} c_i^2,\qquad
	m=\sum_{i=1}^{n-1} c_i^2\mu_i.
	\]

	We first need the following lemma. The proof of the lemma is routine, so we skip.
	
	\begin{lemma}\label{equivalence}
		$B^2a=\rho_2^2\,a$ if and only if $c_i = 0$ whenever $\mu_i^2\ne\rho_2^2$.
	\end{lemma}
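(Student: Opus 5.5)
The plan is to expand both sides of the identity $B^2a=\rho_2^2a$ in the orthonormal eigenbasis $\{w_i\}_{i=1}^{n-1}$ of $B$ and compare coefficients. Since $B=\sum_{i}\mu_i w_iw_i^T$ and the $w_i$ are orthonormal, we have $B^2=\sum_i \mu_i^2 w_iw_i^T$. Applying this to $a=\sum_i c_iw_i$ gives
\[
B^2a=\sum_{i=1}^{n-1} \mu_i^2 c_i w_i .
\]
Consequently
\[
B^2a-\rho_2^2a=\sum_{i=1}^{n-1}(\mu_i^2-\rho_2^2)c_i w_i .
\]

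Because $\{w_i\}$ is a basis, this vector vanishes exactly when every coefficient vanishes, that is, when $(\mu_i^2-\rho_2^2)c_i=0$ for all $i$. This is the step that uses linear independence (equivalently, orthonormality: take the inner product with $w_j$ to isolate the $j$-th coefficient). The condition $(\mu_i^2-\rho_2^2)c_i=0$ for all $i$ holds if and only if $c_i=0$ whenever $\mu_i^2\neq\rho_2^2$, which is precisely the claimed equivalence.

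Both directions come out of this single chain of equivalences, so no separate argument is needed for each. There is no real obstacle. The only points to check are that $B$ is symmetric, since it is a principal block of the symmetric matrix $A$, so the spectral decomposition is available, and that $B^2$ shares the eigenvectors $w_i$ with eigenvalues $\mu_i^2$.
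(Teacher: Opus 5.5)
Your proof is correct. The paper omits this proof as routine, and your argument is the natural one it presumably intends: expand $B^2a-\rho_2^2a=\sum_i(\mu_i^2-\rho_2^2)c_iw_i$ in the orthonormal eigenbasis of the symmetric block $B$, and use linear independence of the $w_i$ to get both directions at once.
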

		
		
		
		
		
	We now prove the sufficiency for the equality case of Lemma \ref{Spec_Rad_Sym1} when $A_{21}$ is restricted to a vector.
	\begin{theorem}\label{necessity}
		If 
		\begin{equation}\label{eq:condC}
			B^2a=\rho_2^2\,a,
		\end{equation} then  $\rho(A)^2=\rho_2^2+\theta^* $.
	\end{theorem}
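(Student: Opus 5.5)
The plan is to get the inequality $\rho(A)^2\le\rho_2^2+\theta^*$ from Lemma \ref{Spec_Rad_Sym1}, and to prove the reverse inequality by exhibiting an eigenvalue $\lambda_0$ of $A$ with $\lambda_0^2=\rho_2^2+\theta^*$. The condition \eqref{eq:condC} will be used, via Lemma \ref{equivalence}, to find a small $A$-invariant subspace on which everything can be computed explicitly.

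\textbf{Step 1: an invariant subspace.} By Lemma \ref{equivalence}, $c_i=0$ unless $\mu_i=\pm\rho_2$. So I split $a=a_++a_-$, where $a_+$ collects the terms with $\mu_i=\rho_2$ and $a_-$ those with $\mu_i=-\rho_2$. If $\rho_2=0$, put everything into $a_+$. Then $Ba_\pm=\pm\rho_2 a_\pm$ and $a_+^Ta_-=0$. Set $p=\|a_+\|^2$ and $q=\|a_-\|^2$, so that $\nu=p+q$ and $m=\rho_2(p-q)$. Let $e_1$ be the first coordinate vector and write $\hat a_\pm=(0,a_\pm^T)^T$. Then
\[
Ae_1=\hat a_++\hat a_-,\qquad A\hat a_+=p\,e_1+\rho_2\hat a_+,\qquad A\hat a_-=q\,e_1-\rho_2\hat a_-.
\]
Hence $\mathrm{span}\{e_1,\hat a_+,\hat a_-\}$ is $A$-invariant. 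In the orthonormal basis $e_1$, $\hat a_+/\sqrt p$, $\hat a_-/\sqrt q$ (dropping any vector that is zero), $A$ acts by the symmetric matrix
\[
M=\begin{pmatrix}0&\sqrt p&\sqrt q\\ \sqrt p&\rho_2&0\\ \sqrt q&0&-\rho_2\end{pmatrix}.
\]
Every eigenvalue of $M$ is therefore an eigenvalue of $A$.

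\textbf{Step 2: the characteristic polynomial.} Expanding the determinant gives
\[
h(\lambda)=\det(\lambda I-M)=\lambda^3-(\rho_2^2+\nu)\lambda-m .
\]
The degenerate cases $p=0$ or $q=0$ give the same polynomial up to a factor of $\lambda\mp\rho_2$, and the case $a=0$ is trivial. For a root $\lambda$ of $h$, substitute $x=\lambda^2-\rho_2^2$. Then $h(\lambda)=0$ reads $\lambda(x-\nu)=m$. Squaring gives $(\rho_2^2+x)(x-\nu)^2=m^2=\mu^2$, that is, $g(x)=0$.

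\textbf{Step 3: picking the right root.} Replacing $a$ by $-a$ does not change the hypothesis, and $h(-\lambda)=-h(\lambda)$ when $m$ is replaced by $-m$. So I may assume $m\ge 0$; otherwise I use the smallest root of $h$ instead of the largest.

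Now $h(\sqrt{\rho_2^2+\nu})=-m\le 0$, so the largest real root $\lambda_0$ of $h$ satisfies $\lambda_0\ge\sqrt{\rho_2^2+\nu}$. Hence $x_0=\lambda_0^2-\rho_2^2\ge\nu$, and $x_0$ is a root of $g$. As noted before Lemma \ref{Spec_Rad_Sym1}, $g(\nu)\le 0$ and $g$ is strictly increasing on $[\nu,\infty)$, so $g$ has exactly one root there, namely $\theta^*$. Therefore $x_0=\theta^*$.

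Consequently $\rho(A)^2\ge\lambda_0^2=\rho_2^2+\theta^*$, and together with Lemma \ref{Spec_Rad_Sym1} this gives equality.

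The main obstacle is Step 1: recognizing that \eqref{eq:condC} forces $a$ into the $\pm\rho_2$ eigenspaces of $B$, which reduces the problem to the $3\times3$ matrix $M$. After that, the only delicate point is checking that the root of $h$ we pick corresponds to the root of $g$ lying in $[\nu,\infty)$ rather than to a smaller root of $g$.
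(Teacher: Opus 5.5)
Your proof is correct. It reaches the lower bound by a different organization from the paper's.

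\textbf{How the routes differ.} The paper starts from $\theta^*$. It sets $\lambda=m/(\theta^*-\nu)$ (or $\lambda=\sqrt{\rho_2^2+\nu}$ when $\mu=0$) and notes $\lambda^2=\rho_2^2+\theta^*>\rho_2^2$, so $\lambda I-B$ is invertible. It then checks that $y=(1,((\lambda I-B)^{-1}a)^T)^T$ satisfies $Ay=\lambda y$ by verifying the secular equation $\nu_+/(\lambda-\rho_2)+\nu_-/(\lambda+\rho_2)=\lambda$. The case $\rho_2=0$ is handled separately. You go the other way: you compress $A$ to the invariant subspace $\mathrm{span}\{e_1,\hat a_+,\hat a_-\}$, compute the cubic $h$, and use the monotonicity of $g$ on $[\nu,\infty)$ to identify which root of $h$ corresponds to $\theta^*$.

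Underneath, the two computations coincide. The paper's eigenvector is $e_1+\frac{1}{\lambda-\rho_2}\hat a_++\frac{1}{\lambda+\rho_2}\hat a_-$, which lies in your subspace. Clearing denominators in its secular equation gives exactly $h(\lambda)=0$.

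Your version treats the cases $\mu=0$ and $\rho_2=0$ uniformly. It also explains where $g$ comes from: substitute $x=\lambda^2-\rho_2^2$ in $h$ and square. The paper's version fixes the correct sign of the eigenvalue automatically by defining $\lambda$ from $\theta^*$, so it needs no root-identification step.

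\textbf{Two points to tidy.}
\begin{enumerate}
\item Replacing $a$ by $-a$ does not change $m=a^TBa$, so it cannot arrange $m\ge 0$. The symmetry that flips the sign of $m$ is $B\mapsto -B$. This keeps the hypothesis, and it replaces $A$ by $-DAD$ with $D=\mathrm{diag}(-1,I)$, which is similar to $-A$. Your fallback already works without this. Since $h_m(-\lambda)=-h_{-m}(\lambda)$ and $\theta^*$ depends only on $\mu=|m|$, for $m<0$ the smallest root of $h$ is $-\lambda_0$, where $\lambda_0$ is the largest root for $|m|$. Its square is the same.
\item In the degenerate cases $p=0$ or $q=0$, say explicitly that the chosen root is not the spurious root $\pm\rho_2$ contributed by the extra linear factor. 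This is automatic: for $a\neq 0$ you have $|\lambda_0|\ge\sqrt{\rho_2^2+\nu}>\rho_2$.
\end{enumerate}
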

	\begin{proof}
		If $a=0$ the result is trivial. Let  $B^2a=\rho_2^2\,a,$ and $a \neq 0$. Suppose $\rho_2 \neq 0$. By Lemma \ref{Spec_Rad_Sym1}, we already have that,
		\begin{equation}\label{eq:JZZ}
			\rho(A)^2 \;\le\; \rho_2^2+\theta^*
		\end{equation}
		Write $\nu_+=\sum_{\mu_i=\rho_2}c_i^2$, \quad
		$\nu_-=\sum_{\mu_i=-\rho_2}c_i^2$, so by Lemma \ref{equivalence}, $\nu=\nu_++\nu_-$ and
		$m=(\nu_+-\nu_-)\rho_2$.
		
		\smallskip
		Let $\mu > 0$ (i.e.\ $\theta^*>\nu$). Define
		\[
		\lambda:=\frac{m}{\theta^*-\nu}.
		\]
		Since $g(\theta^*)=0$, we get, $$m^2=(\theta^*-\nu)^2(\rho_2^2+\theta^*)$$ So
		\begin{align*}
			\lambda^2&=\dfrac{m^2}{(\theta^*-\nu)^2}=\dfrac{(\theta^*-\nu)^2(\rho_2^2+\theta^*)}{(\theta^*-\nu)^2}\\
			\lambda^2&=\rho_2^2+\theta^*>\rho_2^2
		\end{align*}
		Therefore, $|\lambda|>\rho_2$, so
		$\lambda I-B$ is invertible. 
		\newline
		Set $y_1:=(\lambda I-B)^{-1}a$ and
		$y:=(1, y_1^T)^T$. By construction, $ay_0+By_1=\lambda y_1$ (with $y_0=1$), which is
		the bottom block of $Ay=\lambda y$. \\
		Next, note that \\
		$$
		Bw_i=\mu_iw_i \implies (\lambda I-B)^{-1}
		w_i=\dfrac{1}{\lambda-\mu_i}w_i$$
		Therefore, we get that,
		\begin{align*}
			y_1&= (\lambda I-B)^{-1}(\sum_{i=1}^{n-1}c_iw_i)\\
			&= \sum_{i=1}^{n-1}c_i(\lambda I-B)^{-1}w_i\\
			y_1&= \sum_{i=1}^{n-1}c_i\dfrac{1}{\lambda-\mu_i}w_i
		\end{align*}
		Hence we have,
		\begin{equation}
			a^Ty_1= \bigg(\sum_{j=1}^{n-1}c_jw_j^T\bigg)\bigg(\sum_{i=1}^{n-1}\dfrac{c_i}{\lambda-\mu_i}w_i\bigg)
		\end{equation}
		So,
		\[
		a^Ty_1=\sum \limits_{i=1}^{n-1} \frac{c_i^2}{\lambda-\mu_i}
		=\frac{\nu_+}{\lambda-\rho_2}+\frac{\nu_-}{\lambda+\rho_2}
		=\frac{\lambda\nu+\rho_2(\nu_+-\nu_-)}{\lambda^2-\rho_2^2}
		=\frac{\lambda\nu+m}{\theta^*}
		=\frac{\lambda\nu+\lambda(\theta^*-\nu)}{\theta^*}=\lambda,
		\]
		using $\lambda^2-\rho_2^2=\theta^*$ and $m=\lambda(\theta^*-\nu)$ . This is the top block of $Ay=\lambda y$
		\medskip
		
		Let $\mu=0$ (i.e.\ $\theta^{*}=\nu$).
		Then $m=0$. Define
		\[
		\lambda:=\sqrt{\rho_2^{\,2}+\nu}>\rho_2
		\]
		and again
		\[
		y_1:=(\lambda I-B)^{-1}a.
		\]
		
		The same computation gives $ay_0+By_1=\lambda y_1$ (with $y_0=1$) and 
		\[
		a^{T}y_1
		=
		\frac{\lambda\nu+m}{\lambda^{2}-\rho_2^{\,2}}
		=
		\frac{\lambda\nu+0}{\nu}
		=
		\lambda,
		\]
		So in both cases,
		\[
		y=(1,y_1^T)^T
		\]
		is an eigenvector for $\lambda$.
		Thus, $A$ has an eigenvalue of modulus
		\[
		\sqrt{\rho_2^{\,2}+\theta^{*}},
		\]
		so
		\[
		\rho(A)^2\ge \rho_2^{\,2}+\theta^{*};
		\]
		
		Using \eqref{eq:JZZ}, equality follows. 
		
		If $\rho_2 = 0$ then $B=0$ forcing $m= \mu =0$ and $\theta^*=\nu$.
		Since $a \neq 0$ put $ \lambda= \sqrt{\nu}$, $y=(1, a^T/\lambda)^T$. Then 
		\begin{equation*}
			Ay= \begin{pmatrix}
				a^Ta/\lambda\\
				a 
			\end{pmatrix}=\begin{pmatrix}
				\lambda\\
				a 
			\end{pmatrix}= \lambda y
		\end{equation*}
		Therefore $\rho(A)^2 \geq \nu= \rho_2^2+ \theta^*$ and the equality follows by \eqref{eq:JZZ}.
	\end{proof}

	\begin{theorem}\label{lem:outer}
		If $\rho(A)^2=\rho_2^2+\theta^* $ then 
		\begin{equation}\label{eqn2}
			B^2a=\rho_2^2\,a,
		\end{equation}
	\end{theorem}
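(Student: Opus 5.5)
The plan is to extract an eigenvector of $A$ for an eigenvalue of maximal modulus, turn the eigen-equation into a secular equation in the spectral data $(c_i,\mu_i)$ of $B$, and show that this equation is incompatible with $g(\theta^*)=0$ unless every $c_i$ with $\mu_i^2\neq\rho_2^2$ vanishes. By Lemma \ref{equivalence}, that vanishing is exactly \eqref{eqn2}.

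\textbf{Trivial cases and reductions.} If $a=0$, then \eqref{eqn2} holds trivially, so assume $a\neq 0$. Then $\nu>0$ and $\lambda^2=\rho_2^2+\theta^*\ge\rho_2^2+\nu>\rho_2^2$. Let $\lambda$ be an eigenvalue of $A$ with $|\lambda|=\rho(A)$, and let $y=(y_0,y_1^T)^T$ be a corresponding eigenvector. Replacing $A$ by $-A$ replaces $a,B$ by $-a,-B$. This leaves $\rho_2$, $\nu$, $\mu=|m|$, $\theta^*$ and the condition $B^2a=\rho_2^2a$ unchanged, so we may assume $\lambda>0$. Then $\lambda>\rho_2$, so $\lambda I-B$ is invertible.

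\textbf{The secular equation.} The bottom block gives $y_1=y_0(\lambda I-B)^{-1}a$. If $y_0=0$ this would force $y=0$, so we normalise $y_0=1$. The top block $a^Ty_1=\lambda$ then reads, exactly as in the computation in the proof of Theorem \ref{necessity},
\[
\lambda=\sum_{i=1}^{n-1}\frac{c_i^2}{\lambda-\mu_i}=\sum_{i=1}^{n-1}\frac{c_i^2(\lambda+\mu_i)}{\lambda^2-\mu_i^2}.
\]

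\textbf{The key comparison.} Since $\lambda>\rho_2\ge|\mu_i|$, each factor $\lambda+\mu_i$ is positive. Moreover $\lambda^2-\mu_i^2\ge\lambda^2-\rho_2^2=\theta^*$, with equality if and only if $\mu_i^2=\rho_2^2$. Multiplying the secular equation by $\theta^*$ therefore gives
\[
\lambda\theta^*=\sum_{i=1}^{n-1}c_i^2(\lambda+\mu_i)\frac{\theta^*}{\lambda^2-\mu_i^2}\le\sum_{i=1}^{n-1}c_i^2(\lambda+\mu_i)=\lambda\nu+m .
\]
The inequality is strict as soon as some $c_i\neq0$ has $\mu_i^2<\rho_2^2$. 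Hence $0\le\lambda(\theta^*-\nu)\le m\le\mu$, using $\theta^*\ge\nu$. Squaring yields $(\rho_2^2+\theta^*)(\theta^*-\nu)^2\le\mu^2$, that is, $g(\theta^*)\le0$. If the inequality were strict, then since both sides are nonnegative the squared inequality would also be strict, giving $g(\theta^*)<0$. This contradicts the fact that $\theta^*$ is a root of $g$.

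\textbf{Conclusion and main obstacle.} So no $c_i\neq0$ has $\mu_i^2\neq\rho_2^2$, and Lemma \ref{equivalence} gives $B^2a=\rho_2^2a$. The case $\rho_2=0$ needs no separate treatment, since then $B=0$ and \eqref{eqn2} holds trivially. The delicate point is the sign control that makes the termwise comparison legitimate. After the reduction to $\lambda>0$ via $A\mapsto -A$, we need all weights $c_i^2(\lambda+\mu_i)$ to be nonnegative. This is where $\lambda>\rho_2$ is used, and it in turn relies on $a\neq0$ forcing $\theta^*\ge\nu>0$.
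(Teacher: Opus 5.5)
Your proof is correct and follows the paper's argument. It uses the same reduction to a positive extremal eigenvalue via $A\mapsto -A$, the same secular equation, and the same termwise bound $\lambda\theta^*\le\lambda\nu+m$ (the paper's $\lambda\le R$), with equality exactly when $c_i=0$ whenever $\mu_i^2\neq\rho_2^2$. The only difference is cosmetic: the paper solves $g(\theta^*)=0$ as $m=\pm\lambda(\theta^*-\nu)$ and splits into the cases $\theta^*=\nu$ and $\theta^*>\nu$, whereas you square the chain $0\le\lambda(\theta^*-\nu)\le m\le\mu$ to get $g(\theta^*)\le 0$, strict unless equality holds, which handles both cases at once.
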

	
	\begin{proof}
		If $a=0$ the result is trivial. Let $a \neq 0$.
		
		Suppose $\rho(A)^2=\rho_2^2+\theta^*$. Replacing $(a,B)$ by $(-a,-B)$
		replaces $A$ by $-A$, since
		\[
		\begin{pmatrix}0&-a^T\\-a&-B\end{pmatrix}=-A
		\]
		exactly; this leaves $\nu,\rho_2,\mu$
		unchanged (so $\theta^*$ is unchanged), and preserves \eqref{eqn2}
		(since $(-B)^2(-a)=\rho_2^2(-a) \iff B^2a=\rho_2^2a$). Since $\rho(-A)=\rho(A)$,
		we may assume without loss of generality after performing this
		substitution if needed that $\rho(A)$ is attained at a \emph{positive}
		eigenvalue. That is, if $-\rho(A)$ was the extremal eigenvalue of $A$,
		then $+\rho(A)$ is the extremal eigenvalue of $-A$, and proving
		\eqref{eqn2} for the transformed pair proves it for the original pair. 
		
		So let $\lambda:=\rho(A)>0$ with unit eigenvector $y=(y_0,  y_1^T)^T$ where $y_1 \in \mathbb{R}^{n-1}$. Let $y_1= \sum \limits_{i=1}^{n-1}d_iw_i$ Since $\rho(A)^2=\rho_2^2+\theta^*>\rho_2^2$, $\lambda>\rho_2$, so $\lambda$ is
		not an eigenvalue of $B$.\\
		If $y_0=0$ then the lower block of the eigenvalue equation $Ay=\lambda y$ gives 
		$By_1=\lambda y_1$ which is not possible by the above argument. So $y_0\ne0$. Without loss of generality let $y_0 > 0$.
		
		The lower block of $Ay=\lambda y$ gives $ay_0+By_1=\lambda y_1$, that is, $(\lambda I-B)y_1=ay_0$. Since $\lambda$ is not an eigenvalue of $B$, we can write $y_1=(\lambda I-B)^{-1}ay_0$. Now since $Bw_i = \mu_i w_i$ we must have $(\lambda I-B)^{-1}w_i= \frac{1}{\lambda- \mu_i}w_i$. Therefore,\\
		\begin{align*}
			y_1&=(\lambda I-B)^{-1}y_0 \sum \limits_{i=1}^{n-1} c_i w_i\\
			&=y_0\sum \limits_{i=1}^{n-1}c_i(\lambda I-B)^{-1}w_i \\
			&=\sum \limits_{i=1}^{n-1}\dfrac{y_0 c_i}{\lambda- \mu_i}w_i \\
		\end{align*}
		Therefore, $d_i=\dfrac{y_0 c_i}{\lambda- \mu_i}$. Also, the first block of $A y= \lambda y$ gives $\lambda y_0=a^Ty_1$. But $a^Ty_1=\sum \limits_{i=1}^{n-1}c_id_i=\sum \limits_{i=1}^{n-1}\dfrac{y_0 c_i^2}{\lambda- \mu_i}$. Therefore, dividing by $y_0 \neq 0$, we get, 
		
		\begin{equation}\label{eq:secular2}
			\lambda=\sum \limits_{i=1}^{n-1}\frac{c_i^2}{\lambda-\mu_i}.
		\end{equation}
		Since $\lambda>\rho_2\ge|\mu_i|$, $\forall i \in \{1,\dots,n-1\}$,
		$\lambda\mp\mu_i>0$, and $(\lambda-\mu_i)(\lambda+\mu_i)=\lambda^2-\mu_i^2\ge\lambda^2-\rho_2^2>0$. Therefore,\\
		\begin{align*}
			\dfrac{1}{\lambda-\mu_i}&\leq \dfrac{\lambda+\mu_i}{\lambda^2-\rho_2^2}\\
			\dfrac{c_i^2}{\lambda-\mu_i}&\leq \dfrac{c_i^2(\lambda+\mu_i)}{\lambda^2-\rho_2^2}\\
			\sum \limits_{i=1}^{n-1}\dfrac{c_i^2}{\lambda-\mu_i}&\leq \sum \limits_{i=1}^{n-1}\dfrac{c_i^2(\lambda+\mu_i)}{\lambda^2-\rho_2^2}\\
			\lambda &\leq \sum \limits_{i=1}^{n-1}\dfrac{c_i^2(\lambda+\mu_i)}{\lambda^2-\rho_2^2}\\
		\end{align*}
		Define $R:= \dfrac{\lambda \nu+m}{\lambda^2-\rho_2^2}$. Then \\
		$\lambda \nu+m=\sum \limits_{i=1}^{n-1}\lambda c_i^2+\sum \limits_{i=1}^{n-1}c_i^2 \mu_i=\sum \limits_{i=1}^{n-1}c_i^2(\lambda+\mu_i)$. Therefore, $$R= \sum \limits_{i=1}^{n-1}\dfrac{c_i^2(\lambda+\mu_i)}{\lambda^2-\rho_2^2}$$
		Therefore, $\lambda \leq R$. Note that the equality $R =\lambda$ holds if and only if  $\mu_i^2=\rho_2^2$ whenever $c_i \neq 0$. Therefore, it is enough to show that $R= \lambda$ if $\rho(A)^2=\rho_2^2+\theta^*$.

		
		Since $g(\theta^*)=0$, $m^2=(\theta^*-\nu)^2\lambda^2$, i.e.\
		$m=\pm\lambda(\theta^*-\nu)$.
		\begin{itemize}
			\item If $\theta^*=\nu$ (i.e.\ $\mu=0$, so $m=0$), we directly get that,
			$R=(\lambda\nu+0)/\nu=\lambda$.
			\item If $\theta^*>\nu$, we already have $R\ge\lambda$; since
			$R(\lambda^2-\rho_2^2)=\lambda\nu+m$ and $\lambda^2-\rho_2^2=\theta^*$, together gives that $\lambda\nu+m\ge\lambda\theta^*$, i.e.\ $m\ge\lambda(\theta^*-\nu)$.\\
			Now suppose
			$m=-\lambda(\theta^*-\nu)$ this would force
			$-\lambda(\theta^*-\nu)\ge\lambda(\theta^*-\nu)$, i.e.\
			$2\lambda(\theta^*-\nu)\le0$, impossible as $\lambda,\theta^*-\nu>0$; so
			$m=\lambda(\theta^*-\nu)$, and substituting back, we get,
			$R=(\lambda\nu+\lambda(\theta^*-\nu))/\theta^*=\lambda\theta^*/\theta^*=\lambda$.
		\end{itemize}
		In both cases $R=\lambda$, so  $\mu_i^2=\rho_2^2$ whenever $c_i \neq 0$,
		forcing \eqref{eqn2} by the equivalence established above.
	\end{proof}
	
	We combine the two theorems above and write the following result.
	\begin{theorem}\label{matrix_eq}
		Under the notations fixed above, $\rho(A)^2=\rho_2^2+\theta^* $ if and only if 
		$B^2a=\rho_2^2\,a$.
		
	\end{theorem}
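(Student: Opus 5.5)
Theorem \ref{matrix_eq} is simply the conjunction of the two preceding results, so the plan is to cite them in the two directions. For sufficiency, assume $B^2a=\rho_2^2a$. Theorem \ref{necessity} then gives $\rho(A)^2=\rho_2^2+\theta^*$ directly. Recall how it does so: it uses the decomposition $a=\sum c_iw_i$ and Lemma \ref{equivalence} to place all the weight of $a$ on eigenvalues $\pm\rho_2$ of $B$. It then builds an explicit eigenvector $y=(1,((\lambda I-B)^{-1}a)^T)^T$ of $A$ with $\lambda^2=\rho_2^2+\theta^*$. Finally, it combines this with the upper bound of Lemma \ref{Spec_Rad_Sym1}.

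For necessity, assume $\rho(A)^2=\rho_2^2+\theta^*$. Theorem \ref{lem:outer} yields $B^2a=\rho_2^2a$. Its argument runs as follows. First reduce, via $(a,B)\mapsto(-a,-B)$, to the case where $\rho(A)$ is a positive eigenvalue $\lambda>\rho_2$. Then derive the secular equation $\lambda=\sum c_i^2/(\lambda-\mu_i)$. Bound each term to obtain $\lambda\le R=(\lambda\nu+m)/(\lambda^2-\rho_2^2)$. Use $g(\theta^*)=0$ together with the sign analysis of $m$ to force $R=\lambda$. Equality in the termwise bound then forces $c_i=0$ whenever $\mu_i^2\neq\rho_2^2$. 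By Lemma \ref{equivalence} this is exactly $B^2a=\rho_2^2a$.

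Both directions hold under the same standing notation ($\rho_2=\rho(B)$, $\nu=a^Ta$, $\mu=|a^TBa|$, $\theta^*$ the largest root of $g$), so combining them gives the equivalence. The degenerate cases $a=0$ and $\rho_2=0$ are already handled inside each theorem and need no separate treatment.

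The only substantive step is the necessity argument, specifically excluding the sign $m=-\lambda(\theta^*-\nu)$. This is already done in Theorem \ref{lem:outer}: the inequality $R\ge\lambda$ forces $m\ge\lambda(\theta^*-\nu)>-\lambda(\theta^*-\nu)$. So no new obstacle arises, and the proof is a two-line citation.
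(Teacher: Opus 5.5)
Your proposal is correct and matches the paper. The paper gives no separate proof of Theorem \ref{matrix_eq}; it is stated as the combination of Theorem \ref{necessity}, which proves the sufficiency direction with the explicit eigenvector $(1,((\lambda I-B)^{-1}a)^T)^T$, and Theorem \ref{lem:outer}, which proves necessity via the secular equation and by forcing $R=\lambda$. Your summaries of both arguments, including how the sign $m=-\lambda(\theta^*-\nu)$ is excluded, are accurate.
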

	
	\section{Signed Spectral Radius}
	Guo et al.\cite{guo2019sharp} presented the effect on the largest eigenvalue of a graph when a vertex is deleted and proposed the following conjecture. 
	\begin{conjecture}\emph{\cite{guo2019sharp}}
		Let $G$ be a connected graph and $v$ be any vertex in $G$. Then 
		$$ \lambda_{1}(G) \leq \sqrt{\lambda_{1}^2(G-v)+2d(v)-1}$$
	\end{conjecture}
	Sun and Das \cite{sun2020conjecture} gave a proof of this conjecture and also characterized the connected graphs for which the bound is attained.
	\begin{theorem}\emph{\cite{sun2020conjecture}}\label{conjec_result}
		Let $G$ be a connected graph and $v$ be any vertex in $G$. Then 
		$$ \lambda_{1}(G) \leq \sqrt{\lambda_{1}^2(G-v)+2d(v)-1}$$
		Equality holds if and only if $G$ is a star graph and $v$ is a pendant vertex, or $G$ is complete.
	\end{theorem}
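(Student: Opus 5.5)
The plan is to apply Lemma \ref{Spec_Rad_Sym1} with $A=A(G)$ ordered so that $v$ comes first, $a=\mathbf 1_{N(v)}$ the indicator vector of $N(v)$, and $B=A(G-v)$. Write $d=d(v)$ and $\lambda=\rho_2=\lambda_1(G-v)$; this is the spectral radius of $B$ by Perron--Frobenius. Then $\nu=a^Ta=d$ and $m=a^TBa=2e(N(v))\ge 0$, where $e(N(v))$ is the number of edges inside $N(v)$. Also $\rho(A)=\lambda_1(G)$. So it suffices to show $\theta^*\le 2d-1$. If $d=0$, connectedness forces $G=K_1$ and the statement is trivial, so assume $d\ge1$. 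Then $2d-1\ge d=\nu$. Since $g$ is increasing on $[\nu,\infty)$ and $g(\nu)\le 0$, we get $\theta^*\le 2d-1$ as soon as $g(2d-1)\ge 0$, that is,
\[
(d-1)^2(\lambda^2+2d-1)\ \ge\ m^2 .
\]

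I would use two bounds on $m$: the trivial bound $m\le d(d-1)$, and the Rayleigh bound $m\le \lambda\|a\|^2=\lambda d$.
\begin{itemize}
\item If $\lambda\ge d-1$, then $\lambda^2+2d-1\ge (d-1)^2+2d-1=d^2$. Hence the left side is at least $d^2(d-1)^2\ge m^2$.
\item If $\lambda<d-1$, then $m^2\le\lambda^2d^2$. The required inequality becomes $(d-1)^2(2d-1)\ge \lambda^2(d^2-(d-1)^2)=\lambda^2(2d-1)$, which holds since $\lambda<d-1$.
\end{itemize}
This proves the inequality.

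For equality we need both $\theta^*=2d-1$ and $\rho(A)^2=\rho_2^2+\theta^*$. By Theorem \ref{matrix_eq}, the second condition is equivalent to $B^2a=\lambda^2a$. I would split on $d$.

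\emph{Case $d\ge 2$.} Then $\theta^*>\nu$ would need $g(2d-1)=0$. In the branch $\lambda<d-1$ this would force $\lambda=d-1$ through the last inequality, which is impossible. If instead $\theta^*=\nu=d<2d-1$, equality fails outright. So we must be in the branch $\lambda\ge d-1$ with $m=d(d-1)$ and $\lambda=d-1$. This means $N(v)$ is a clique $K_d$ in $G-v$. The component of $G-v$ containing this clique has spectral radius at least $d-1$, with equality (by Perron--Frobenius strict monotonicity) only if the component is exactly that $K_d$. Since $\lambda=d-1$, the component is $K_d$. Connectedness of $G$ then forces $G-v=K_d$, so $G=K_{d+1}$.

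\emph{Case $d=1$.} Here $g(1)=-m^2$ and $\nu=1$, so $\theta^*=1$ exactly when $m=0$, which is automatic. The whole content is then $B^2a=\lambda^2a$ with $a=e_u$, where $u$ is the unique neighbour of $v$. The $w$-entry of $B^2e_u$ counts walks of length $2$ from $u$ to $w$. So the condition says there is no walk of length $2$ from $u$ to any vertex other than $u$, and that $\deg_{G-v}(u)=\lambda^2$. The first part means every neighbour of $u$ in $G-v$ has $u$ as its only neighbour. By connectedness, $G-v$ is then a star $K_{1,k}$ centred at $u$, and $\lambda^2=k$ holds automatically. Hence $G$ is a star and $v$ is a pendant vertex; this includes $K_2$, which is also complete.

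The converse is a direct check: for $K_n$ we have $(n-1)^2=(n-2)^2+2(n-1)-1$, and for a star with $v$ pendant we have $k+1=k+1$. The main obstacle I expect is the equality analysis for $d\ge2$, specifically ensuring that every branch except ``$N(v)$ is a clique and $\lambda_1(G-v)=d-1$'' is genuinely excluded. That exclusion depends on the strictness of the two bounds on $m$ and on the Perron--Frobenius monotonicity step.
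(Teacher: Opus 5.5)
Your proof is correct and is essentially the paper's own argument specialized to unsigned graphs. The paper's route is Lemma \ref{Spec_Rad_Sym1} plus the check $g(2d-1)\ge 0$ from the two bounds $m\le \lambda d$ and $m\le d(d-1)$, as in the proof of Theorem \ref{Spec_Rad_Bound1}, with Theorem \ref{matrix_eq} handling $d=1$; where the paper packages the two bounds as interlacing together with Lemma \ref{Spec_Rad_lower}, you split on $\lambda\ge d-1$ versus $\lambda<d-1$. Your remaining steps supply the structural conclusions that the paper leaves to the cited source, and they are sound: you read off all coordinates of $B^2e_u=\lambda^2e_u$, and you use Perron--Frobenius strict monotonicity with connectedness to upgrade ``$N(v)$ is a clique and $\lambda_1(G-v)=d-1$'' to $G=K_{d+1}$.
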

	A short proof of this result was given by Jin et al.\cite{jin2024upper}, Liu and Ning \cite{MR5098803}. In \cite{lan2023remarks}, an analogue of this result was proven with some restrictions for a signed graph $\Sigma$. The authors proved the following:
	\begin{theorem}\emph{\cite{lan2023remarks}}\label{Remarks_result}
		Let $\Sigma$ be a connected signed graph. Let $v$ be a vertex such that $\Sigma-v$ is balanced. Then $$\lambda_1(\Sigma)\leq \sqrt{\lambda_1^2(\Sigma-v)+2d(v)-1}$$
	\end{theorem}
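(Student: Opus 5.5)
The plan is to reduce the statement to the unsigned result, Theorem~\ref{conjec_result} of Sun and Das, using switching and a Perron--Frobenius comparison. Let $G$ be the underlying graph of $\Sigma$. Since $\Sigma$ is connected, $G$ is connected, so Theorem~\ref{conjec_result} applies to $G$ and $v$ and gives
\[
\lambda_1(G)\le \sqrt{\lambda_1^2(G-v)+2d(v)-1}.
\]
It then suffices to prove two facts:
\[
\lambda_1(\Sigma)\le\lambda_1(G)\qquad\text{and}\qquad \lambda_1(\Sigma-v)=\lambda_1(G-v).
\]
The degree $d(v)$ is the same in $\Sigma$ and in $G$, since it counts neighbours regardless of sign.

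\emph{First fact: $\lambda_1(\Sigma-v)=\lambda_1(G-v)$.} Since $\Sigma-v$ is balanced, it is switching equivalent to $(G-v,+)$. This is the standard characterization of balance: pick a spanning forest, choose $\eta$ so that every forest edge becomes positive, and note that balance then forces every remaining edge to be positive as well. Switching preserves the spectrum, so $\lambda_1(\Sigma-v)=\lambda_1(G-v)$. One could also extend the switching function to $v$ and work with a switched copy of $\Sigma$ in which only edges at $v$ may be negative, but this is not needed.

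\emph{Second fact: $\lambda_1(\Sigma)\le\lambda_1(G)$.} Let $x$ be a unit eigenvector of $A(\Sigma)$ for $\lambda_1(\Sigma)$, and let $|x|$ denote its entrywise absolute value. Then
\[
\lambda_1(\Sigma)=x^TA(\Sigma)x=\sum_{i,j}a^{\sigma}_{ij}x_ix_j\le\sum_{i,j}|a^{\sigma}_{ij}|\,|x_i|\,|x_j|=|x|^TA(G)|x|\le\lambda_1(G),
\]
where the last step is the Rayleigh quotient bound, using $\||x|\|=1$.

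Combining these two facts with Theorem~\ref{conjec_result} gives
\[
\lambda_1(\Sigma)\le\lambda_1(G)\le\sqrt{\lambda_1^2(G-v)+2d(v)-1}=\sqrt{\lambda_1^2(\Sigma-v)+2d(v)-1}.
\]

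None of the steps is a real obstacle. The only point to state carefully is the switching characterization of balanced signed graphs, which is standard (Harary/Zaslavsky). The balance hypothesis is essential in exactly one place, the equality $\lambda_1(\Sigma-v)=\lambda_1(G-v)$. Without it we only have $\lambda_1(\Sigma-v)\le\lambda_1(G-v)$, which points in the wrong direction, and this is why the inequality can fail for general signed graphs.
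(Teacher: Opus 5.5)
Your proof is correct, but it takes a different route from the one the paper relies on. The paper does not reprove this statement, which it quotes from Lan et al. Instead it recovers it, in the stronger form of Corollary~\ref{Remarks_result_coro}, from its signed spectral-radius inequality $\rho(\Sigma)\le\sqrt{\rho^2(\Sigma-v)+2d(v)-1}$ (Theorem~\ref{Spec_Rad_Bound1}). That inequality is proved by switching all edges at $v$ to positive and applying Jin et al.'s block-matrix bound (Lemma~\ref{Spec_Rad_Sym}), with $\theta^*\le 2d(v)-1$ coming from Lemma~\ref{Spec_Rad_lower} applied to $\Sigma'[N(v)]$. The statement then follows from two facts:
\begin{itemize}
\item $\lambda_1(\Sigma)\le\rho(\Sigma)$;
\item $\rho(\Sigma-v)=\lambda_1(\Sigma-v)$. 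This holds for balanced $\Sigma-v$ because its spectrum is that of $G-v$, whose largest eigenvalue is its spectral radius by Perron--Frobenius.
\end{itemize}

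Your reduction, via $\lambda_1(\Sigma)\le|x|^TA(G)|x|\le\lambda_1(G)$ and switching, is shorter and completely elementary once the unsigned Sun--Das theorem is granted. It does, however, use that theorem as a black box and must pass through $\lambda_1(G-v)$. Since $\lambda_1(\Sigma-v)=\lambda_1(G-v)$ essentially characterizes balance (exactly so when $G-v$ is connected), your method cannot reach beyond the balanced hypothesis.

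The paper's route does not use Sun--Das; it recovers that theorem as a corollary. It needs no connectivity, only that $v$ is non-isolated. It gives the conclusion whenever $|\lambda_{n-1}(\Sigma-v)|\le\lambda_1(\Sigma-v)$, which includes many unbalanced $\Sigma-v$. The price is the heavier block-matrix machinery.
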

	However, the result is not true in general for signed graphs \cite{lan2023remarks}. A simple example is the following: a signed graph $\Sigma$ obtained from an unbalanced triangle with one negative edge and a pendant vertex attached to the vertex in the triangle having $2$ positive neighbors; the result does not hold when $v$ is chosen to be the pendant vertex. Nevertheless, in this section, we prove that the perturbation result is true for the spectral radius of signed graphs. To this end, we recall the following general inequality between spectral radius of symmetric matrix and its principal submatrix.
	
	\begin{lemma}\label{Spec_Rad_Sym}\emph{\cite{jin2024upper}}
		Let $A$ be a symmetric matrix partitioned as
		\[
		A=
		\begin{pmatrix}
			0 & A_{12}\\
			A^T_{12} & A_{22}
		\end{pmatrix}.
		\]
		Then
		\[
		\rho(A)^2\le \rho_2^2+\theta^*,
		\]
		where $\rho_2=\rho(A_{22})$, $\mu=\rho(A_{12}A_{22}A^T_{12})$, $\nu=\rho(A_{12}A^T_{12})$, and $\theta^*$ is the largest real root of
		\[
		g(x)=(x-\nu)^2(\rho_2^2+x)-\mu^2.
		\]
	\end{lemma}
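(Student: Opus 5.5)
The plan is to work directly with an extremal eigenvector of $A$ and reduce everything to a scalar inequality in $\theta:=\rho(A)^2-\rho_2^2$, which will turn out to be exactly $g(\theta)\le 0$.

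\textbf{Reductions.} First, replacing $(A_{12},A_{22})$ by $(-A_{12},-A_{22})$ turns $A$ into $-A$. This leaves $\rho(A)$, $\rho_2$ and $\nu$ unchanged. It also leaves $\mu$ unchanged, because $A_{12}A_{22}A_{12}^T$ only changes sign and we take its spectral radius. So we may assume $\rho(A)=\lambda>0$ is an eigenvalue, with eigenvector $y=(y_0^T,y_1^T)^T$. If $\lambda\le\rho_2$ there is nothing to prove, since $\theta^*\ge\nu\ge 0$. So assume $\lambda>\rho_2$. Then $\lambda I-A_{22}$ is invertible, and $\lambda$ is not an eigenvalue of $A_{22}$. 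Hence $y_0\neq 0$, since otherwise the lower block would give $A_{22}y_1=\lambda y_1$.

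\textbf{The key estimate.} The lower block of $Ay=\lambda y$ gives $y_1=(\lambda I-A_{22})^{-1}A_{12}^Ty_0$. The upper block then gives
\[
\lambda\, y_0^Ty_0=y_0^TA_{12}(\lambda I-A_{22})^{-1}A_{12}^Ty_0 .
\]
Now compare $(\lambda I-A_{22})^{-1}$ with $(\lambda I+A_{22})/(\lambda^2-\rho_2^2)$ in the Loewner order. Diagonalize $A_{22}$ with eigenvalues $t$, $|t|\le\rho_2$. For each $t$ we have
\[
\frac{1}{\lambda-t}=\frac{\lambda+t}{\lambda^2-t^2}\le\frac{\lambda+t}{\lambda^2-\rho_2^2},
\]
because $\lambda+t>0$ and $\lambda^2-t^2\ge\lambda^2-\rho_2^2>0$. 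Hence
\[
(\lambda I-A_{22})^{-1}\preceq\frac{\lambda I+A_{22}}{\theta}.
\]
Conjugating by $A_{12}^T$ and evaluating at the unit vector $x=y_0/\|y_0\|$ gives
\[
\lambda\theta\le\lambda\,x^TA_{12}A_{12}^Tx+x^TA_{12}A_{22}A_{12}^Tx\le\lambda\nu+\mu .
\]
The last step uses the Rayleigh-quotient bounds by the spectral radii of the two symmetric matrices $A_{12}A_{12}^T$ and $A_{12}A_{22}A_{12}^T$.

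\textbf{Conclusion.} If $\theta\le\nu$, then $\theta\le\theta^*$, because $\theta^*\ge\nu$. If $\theta>\nu$, the estimate gives $0<\lambda(\theta-\nu)\le\mu$. Squaring and using $\lambda^2=\rho_2^2+\theta$ yields
\[
(\theta-\nu)^2(\rho_2^2+\theta)\le\mu^2,
\]
that is, $g(\theta)\le 0$. Since $g$ is strictly increasing on $[\nu,\infty)$ and $\theta^*$ is its largest real root, this forces $\theta\le\theta^*$. Hence $\rho(A)^2=\rho_2^2+\theta\le\rho_2^2+\theta^*$.

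The only genuinely non-routine step is the operator inequality $(\lambda I-A_{22})^{-1}\preceq(\lambda I+A_{22})/\theta$. It is what converts the resolvent identity into a bound involving only $\nu$ and $\mu$, and it needs $\lambda>\rho_2$, which is why the case $\lambda\le\rho_2$ is split off first. It is the same scalar comparison the paper uses in the proof of Theorem \ref{lem:outer}, now applied at the level of matrices.
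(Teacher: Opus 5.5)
Your proof is correct. The paper itself gives no proof of this lemma: it is quoted from \cite{jin2024upper} (it is the same statement as Lemma~\ref{Spec_Rad_Sym1}), so there is no in-paper argument to match line by line.

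The closest argument in the paper is the proof of Theorem~\ref{lem:outer}, and yours is its block-matrix version. There, with $A_{12}^T=a$ a single vector, the authors write the secular equation $\lambda=\sum_i c_i^2/(\lambda-\mu_i)$. They bound it termwise by $\sum_i c_i^2(\lambda+\mu_i)/(\lambda^2-\rho_2^2)=(\lambda\nu+m)/(\lambda^2-\rho_2^2)$. You replace the termwise bound by the Loewner inequality $(\lambda I-A_{22})^{-1}\preceq(\lambda I+A_{22})/\theta$. You also replace the scalars $\nu$ and $m$ by Rayleigh quotients of $A_{12}A_{12}^T$ and $A_{12}A_{22}A_{12}^T$ at $y_0/\|y_0\|$.

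Your route gives a short, self-contained proof for blocks of any size. It also exposes the equality structure at once. If $\rho(A)^2=\rho_2^2+\theta^*$ with $A_{12}\neq 0$, every inequality in your chain must be tight. So $y_0/\|y_0\|$ attains both Rayleigh bounds, and $w=A_{12}^Ty_0$ satisfies $A_{22}^2w=\rho_2^2w$.

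For a single row these two conditions specialise as follows. The first becomes the sign requirement $m=\mu\ge 0$; this is exactly the paper's step excluding $m=-\lambda(\theta^*-\nu)$. The second is the condition $B^2a=\rho_2^2a$ of Theorem~\ref{matrix_eq}. So your argument also yields the direction proved in Theorem~\ref{lem:outer} essentially for free.
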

	
	Finally, we also need the characterization of spectral radius of a symmetric matrix in terms of Rayleigh quotient.
	
	\begin{lemma}\label{Spec_Rad_Rayleigh}
		Let $\Sigma=(G,\sigma)$ be a signed graph and let $0\neq \mathbf{x}\in\mathbb{R}^n$. Then
		\[
		\rho(\Sigma)\ = \max_{\mathbf{x} \neq 0} \left|
		\frac{\mathbf{x}^{T}A(\Sigma)\mathbf{x}}
		{\mathbf{x}^{T}\mathbf{x}}
		\right|.
		\]
	\end{lemma}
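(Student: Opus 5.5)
The plan is to read the statement off the spectral theorem for the real symmetric matrix $A(\Sigma)$. Since $\sigma$ takes values in $\{-1,+1\}$ and $a^{\sigma}_{ij}=a^{\sigma}_{ji}$, the matrix $A=A(\Sigma)$ is real symmetric. Hence it has an orthonormal basis of eigenvectors $u_1,\dots,u_n$ with real eigenvalues $\lambda_1(\Sigma)\ge\cdots\ge\lambda_n(\Sigma)$, and $\rho(\Sigma)=\max\{\lambda_1(\Sigma),-\lambda_n(\Sigma)\}=\max_i|\lambda_i(\Sigma)|$.

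\emph{Upper bound.} For $0\neq\mathbf{x}\in\mathbb{R}^n$, expand $\mathbf{x}=\sum_i \alpha_i u_i$. Then
\[
\mathbf{x}^TA\mathbf{x}=\sum_i \lambda_i(\Sigma)\alpha_i^2 \quad\text{and}\quad \mathbf{x}^T\mathbf{x}=\sum_i\alpha_i^2 .
\]
By the triangle inequality,
\[
\left|\mathbf{x}^TA\mathbf{x}\right|\le \sum_i |\lambda_i(\Sigma)|\,\alpha_i^2\le \rho(\Sigma)\,\mathbf{x}^T\mathbf{x}.
\]
Therefore $\left|\mathbf{x}^TA\mathbf{x}/\mathbf{x}^T\mathbf{x}\right|\le\rho(\Sigma)$ for every nonzero $\mathbf{x}$.

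\emph{Attainment.} Choose an index $j$ with $|\lambda_j(\Sigma)|=\rho(\Sigma)$; concretely $j=1$ if $\lambda_1(\Sigma)\ge-\lambda_n(\Sigma)$, and $j=n$ otherwise. Taking $\mathbf{x}=u_j$ gives a Rayleigh quotient equal to $\lambda_j(\Sigma)$, whose absolute value is $\rho(\Sigma)$. So the supremum is attained and is a maximum, which proves the equality. One could equally phrase the argument via Courant--Fischer, using $\lambda_1=\max\mathbf{x}^TA\mathbf{x}/\mathbf{x}^T\mathbf{x}$ and $\lambda_n=\min\mathbf{x}^TA\mathbf{x}/\mathbf{x}^T\mathbf{x}$.

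There is no genuine obstacle here. The only point needing care is that the absolute value of a quadratic form is bounded using the moduli $|\lambda_i(\Sigma)|$ and not just $\lambda_1(\Sigma)$, since for signed graphs $-\lambda_n(\Sigma)$ may exceed $\lambda_1(\Sigma)$. This is exactly why the statement uses $\rho$ rather than $\lambda_1$, and why it will later be combined with Lemma~\ref{Spec_Rad_Sym} and the sign-flip symmetry $A\mapsto -A$ used in Theorem~\ref{lem:outer}.
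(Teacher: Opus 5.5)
Your proof is correct. The paper states this lemma without proof, as the standard Rayleigh-quotient characterization for real symmetric matrices, and your spectral-decomposition argument is exactly the justification it relies on: you bound $|\mathbf{x}^{T}A\mathbf{x}|$ by $\sum_i|\lambda_i(\Sigma)|\alpha_i^2$ and attain the bound at an eigenvector for an eigenvalue of modulus $\rho(\Sigma)$.

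One correction to your closing remark about how the lemma is used later. The paper applies it with $\mathbf{x}=\mathbf{1}$ in Lemma~\ref{Spec_Rad_lower}. In the equality case of Theorem~\ref{Spec_Rad_Bound1} it uses a slightly stronger form: equality forces $\mathbf{x}$ to be an eigenvector for $\pm\rho(\Sigma)$. Your expansion also gives this, since $\sum_i(\rho(\Sigma)\mp\lambda_i(\Sigma))\alpha_i^2=0$ kills every $\alpha_i$ outside the relevant eigenspace.
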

	
	For ease of reading the main theorem and proofs, we introduce the following notation. Let $\Sigma=(G,\sigma)$ be a signed graph and let $v\in V(G)$ be a
	vertex with degree $d(v)$. Let $q_v(\Sigma)=t_v^+(\Sigma)-t_v^-(\Sigma)$ be the difference of number of positive triangles $t_v^+(\Sigma)$ in $\Sigma$ containing $v$ and the number of negative triangles $t_v^-(\Sigma)$ in $\Sigma$ containing $v$. Define 
	\[g_v(x):=(x-d(v))^2(\rho^2(\Sigma-v)+x) -4\vert q_v(\Sigma) \vert ^2.\]
	
	Now, we are ready to state one of our main results of this section. 
	\begin{theorem}\label{Spec_Rad_Theta}
		Let $\Sigma=(G,\sigma)$ be a signed graph and let $v$ be a non-isolated vertex. Let $\theta^{*}$ denote the largest real root of $g_v(x)$.
		Then
		\[
		\rho^{2}(\Sigma)\leq \rho^{2}(\Sigma-v)+\theta^{*}.
		\]
		Furthermore, if $t_v^+=t_v^-$, then
		\[
		\rho^{2}(\Sigma)\leq \rho^{2}(\Sigma-v)+d(v).
		\]
	\end{theorem}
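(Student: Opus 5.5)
Order the vertices so that $v$ comes first and write
\[
A(\Sigma)=\begin{pmatrix} 0 & a^T\\ a & B\end{pmatrix},
\]
where $B=A(\Sigma-v)$ and $a\in\{0,\pm1\}^{n-1}$ records the signed edges at $v$. The plan is to apply Lemma~\ref{Spec_Rad_Sym} with $A_{12}=a^T$ and then identify its parameters combinatorially, so that the general polynomial $g$ turns into $g_v$.

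\textbf{Parameters.} First, $\rho_2=\rho(B)=\rho(\Sigma-v)$. Next, $\nu=\rho(a^Ta)=a^Ta=\sum_u a_u^2=d(v)$. Finally, $\mu=\rho(a^TBa)=|a^TBa|$, and
\[
a^TBa=\sum_{u,w\in N(v)}\sigma(vu)\,\sigma(uw)\,\sigma(wv).
\]
Each triangle $vuw$ appears twice in this sum, once for each ordered pair $(u,w)$, and each time it contributes its sign $\sigma(C)$. Hence $a^TBa=2\bigl(t_v^+(\Sigma)-t_v^-(\Sigma)\bigr)=2q_v(\Sigma)$ and $\mu^2=4|q_v(\Sigma)|^2$.

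\textbf{First inequality.} With these values the polynomial $g$ of Lemma~\ref{Spec_Rad_Sym} is exactly $g_v$. The lemma then gives $\rho^2(\Sigma)=\rho(A)^2\le\rho^2(\Sigma-v)+\theta^*$. Since $v$ is non-isolated we have $d(v)\ge1$, but this is needed only to make the statement meaningful; the bound itself holds regardless.

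\textbf{Second inequality.} If $t_v^+=t_v^-$, then $q_v=0$ and $g_v(x)=(x-d(v))^2\bigl(\rho^2(\Sigma-v)+x\bigr)$. Its roots are $d(v)$ (double) and $-\rho^2(\Sigma-v)\le 0<d(v)$, so the largest real root is $\theta^*=d(v)$, consistent with the earlier remark that $\theta^*=\nu$ exactly when $\mu=0$. This yields $\rho^2(\Sigma)\le\rho^2(\Sigma-v)+d(v)$.

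\textbf{Main obstacle.} The only delicate step is the triangle count. It requires checking that the diagonal terms $u=w$ vanish, which holds because $B$ has zero diagonal, and that the factor $2$ is correct because both orderings of $u,w$ appear. Everything else is direct substitution into Lemma~\ref{Spec_Rad_Sym}.
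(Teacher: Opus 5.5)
Your proposal is correct and follows the paper's proof: both apply Lemma~\ref{Spec_Rad_Sym} with $\rho_2=\rho(\Sigma-v)$, $\nu=d(v)$ and $\mu=2|q_v(\Sigma)|$. The only difference is in how $a^TBa$ is computed. The paper first switches with respect to $N^{-}(v)$, so that $a$ becomes a $0/1$ indicator vector, and then invokes switching invariance of triangle signs. You instead work directly with the signed vector $a$, which reads off each triangle's sign as $\sigma(vu)\sigma(uw)\sigma(wv)$ and makes the switching step unnecessary.
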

	
	Before giving the proof of theorem, we quickly note some properties of the $g_v(x)$ which prove useful later on. Observe that $g'_v(x)>0$ for all $x>d(v)$, the function $g_v$ is strictly increasing on $[d(v),\infty)$. Furthermore, $g(d(v))=-4\vert q_v(\Sigma)\vert^2\le 0.$
	Hence, if $\theta^*$ denotes the largest real root of $g_v$, then $\theta^*\ge d(v)$. Hence
	$\theta^{*}$ is that unique root in $[d(v),\infty)$. Moreover, $\theta^*=d(v)$ if and only if $q_v(\Sigma)=0.$ 
	
	\begin{proof}
		Let $\Sigma=(G,\sigma)$ be a signed graph and let $v\in V(G)$ be a
		vertex with $d(v)=d$. Switch $\Sigma$ with respect to
		$N^{-}(v)$ so that, in the resulting signed graph $\Sigma'$, every
		edge incident with $v$ is positive;
		\[
		\rho(\Sigma')=\rho(\Sigma)
		\quad\text{and}\quad
		\rho(\Sigma'-v)=\rho(\Sigma-v).
		\]
		Order the vertices of $\Sigma'$ with $v$ first and write
		\[
		A:=A(\Sigma')
		=
		\begin{pmatrix}
			0 & a^{T}\\
			a & B
		\end{pmatrix},
		\qquad
		B:=A(\Sigma'-v),
		\]
		where $a\in\{0,1\}^{\,n-1}$ is the indicator vector of $N(v)$.
		Note that,\\
		\[
		a^{T}Ba
		=2\bigl(|E^{+}(\Sigma'[N(v)])|-|E^{-}(\Sigma'[N(v)])|\bigr)=2(t_v^+(\Sigma')-t_v^-(\Sigma'))
		\] 
		Since the sign of every cycle is switching invariant, we must have,
		\[
		a^{T}Ba
		=2(t_v^+(\Sigma)-t_v^-(\Sigma))=2q_v(\Sigma)
		\] 
		Thus we have $\rho(a^TBa)= \vert a^TBa\vert=2\vert q_v(\Sigma) \vert$, $\rho(a^Ta)=d(v)$. So by Lemma \ref{Spec_Rad_Sym}, we have, \[
		\rho^{2}(\Sigma)\leq \rho^{2}(\Sigma-v)+\theta^{*}.
		\]
		If $t_v^+=t_v^-$, then the largest real root of
		$g_v(x)$ is $\theta^{*}=d(v)$. Hence,
		\[
		\rho^{2}(\Sigma)
		\leq \rho^{2}(\Sigma-v)+d(v). \qedhere
		\]
	\end{proof}
	
	Next, we prove a general lower bound on spectral radius of signed graph which is useful in proving the main result and its equality case. 
	
	\begin{lemma}\label{Spec_Rad_lower}
		For $n \geq 2$,
		\[\rho^2(\Sigma)\geq \dfrac{4(\vert E^+(\Sigma)\vert-\vert E^-(\Sigma)\vert)^2}{(n-1)^2}-(2n-1).\]
		Equality holds if and only if $\Sigma\cong(K_n,+)$ or $\Sigma \cong(K_n,-)$.
	\end{lemma}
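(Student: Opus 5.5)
The plan is to apply the Rayleigh quotient characterization in Lemma \ref{Spec_Rad_Rayleigh} to the all-ones vector, and then weaken the resulting bound to the stated form using the trivial estimate on the number of edges.

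Take $\mathbf{x}=\mathbf{1}\in\mathbb{R}^n$. Then $\mathbf{1}^TA(\Sigma)\mathbf{1}=\sum_{i,j}a^\sigma_{ij}=2(|E^+(\Sigma)|-|E^-(\Sigma)|)$. Write $e:=|E^+(\Sigma)|-|E^-(\Sigma)|$. Lemma \ref{Spec_Rad_Rayleigh} gives
\[
\rho(\Sigma)\ \ge\ \frac{2|e|}{n},\qquad\text{so}\qquad \rho^2(\Sigma)\ge \frac{4e^2}{n^2}.
\]
It then suffices to show that $\frac{4e^2}{n^2}\ge \frac{4e^2}{(n-1)^2}-(2n-1)$, which is equivalent to
\[
4e^2\Bigl(\frac{1}{(n-1)^2}-\frac1{n^2}\Bigr)=\frac{4e^2(2n-1)}{n^2(n-1)^2}\ \le\ 2n-1 .
\]
This holds because $|e|\le |E(G)|\le \binom n2=\frac{n(n-1)}{2}$, so $4e^2\le n^2(n-1)^2$. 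This proves the inequality.

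For the equality case, equality in the stated bound forces equality in both steps. First, it forces $|e|=\binom n2$. This means $G=K_n$ and all edges carry the same sign, so $\Sigma\cong(K_n,+)$ or $(K_n,-)$. Second, it forces $\rho(\Sigma)=2|e|/n=n-1$, which is then automatically consistent. Conversely, $(K_n,+)$ has spectrum $\{n-1,(-1)^{(n-1)}\}$ and $(K_n,-)$ has spectrum $\{1-n,1^{(n-1)}\}$, so $\rho=n-1$ in both cases. The right-hand side equals $\frac{4\cdot n^2(n-1)^2/4}{(n-1)^2}-(2n-1)=n^2-2n+1=(n-1)^2$, so equality holds. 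The hypothesis $n\ge2$ ensures that $n-1\neq0$.

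No step is a genuine obstacle. The only point requiring care is to note that equality in the weakening step already pins down $\Sigma$, so no analysis of the Rayleigh equality condition is needed for the necessity direction.
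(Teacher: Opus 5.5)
Your proposal is correct and follows essentially the same route as the paper: the Rayleigh quotient at $\mathbf{1}$ gives $\rho^2(\Sigma)\ge 4e^2/n^2$, and the weakening step reduces to $4e^2\le n^2(n-1)^2$. As in the paper, equality in that last estimate already forces one edge class to be empty and $G=K_n$, so $\Sigma$ is $(K_n,+)$ or $(K_n,-)$. Your explicit spectral check of the converse fills in what the paper leaves as ``clearly''.
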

	\begin{proof}
		Applying Lemma \ref{Spec_Rad_Rayleigh} on $\mathbf{x}=\mathbf{1}$, where $\mathbf{1}$ is the vector of all-ones,  we get,      \begin{equation}\label{spec_all_one_bound} 
			\rho^2(\Sigma)\geq \bigg(\dfrac{\textbf{1}^TA(\Sigma)\textbf{1}}{\textbf{1}^{T}\textbf{1}}\bigg)^2=\dfrac{4(\vert E^+(\Sigma)\vert-\vert E^-(\Sigma)\vert)^2}{n^2}.
		\end{equation} 
		It is easy to see that 
		\begin{equation}\label{max_edges}
			2\left\vert \vert E^+(\Sigma)\vert-\vert E^-(\Sigma)\vert\right\vert\leq 2(\vert E^+(\Sigma)\vert+\vert E^-(\Sigma)\vert)\leq n(n-1).
		\end{equation}
		Hence,
		\begin{align*}
			&\dfrac{4(\vert E^+(\Sigma)\vert-\vert E^-(\Sigma)\vert)^2}{n^2}-\Bigg(\dfrac{4(\vert E^+(\Sigma)\vert-\vert E^-(\Sigma)\vert)^2}{(n-1)^2}-(2n-1)\Bigg)\\
			&=(2n-1)\left(1-\dfrac{4\left(\vert E^+(\Sigma)\vert-\vert E^-(\Sigma)\vert\right)^2}{n^2(n-1)^2}\right)\\
			&\geq 0
		\end{align*}
		Substituting the above inequality in Equation (\ref{spec_all_one_bound}) gives the desired result.
		
		Clearly for $\Sigma\cong(K_n,+)$ and $\Sigma\cong(K_n,-)$ the equality holds. Conversely for the equality to hold the last equality must hold, which in turn forces the equality to hold at both the places in inequality (\ref{max_edges}). That is true when $\vert E^+(\Sigma)\vert=0$ or $\vert E^-(\Sigma)\vert=0$ which means $\Sigma\cong(K_n,+)$ or $\Sigma \cong(K_n,-)$.
	\end{proof}
	
	Now, we are ready to state our main theorem.
	
	\begin{theorem}\label{Spec_Rad_Bound1}
		Let $\Sigma=(G,\sigma) $ be a signed graph and $v$ be a non-isolated vertex.  Then \[\rho(\Sigma) \leq \sqrt{\rho^2(\Sigma-v)+2d(v)-1}.\]
		Let $\Sigma^\prime$ be the signed graph obtained from  $\Sigma$ by switching with respect to $N^{-}(v)$. Let $d=d(v) \ge 2$, then the equality holds if and only if
		\begin{itemize}
			\item[(a)] $\Sigma^\prime[N(v)]\cong (K_d,+)$ or
			$\Sigma^\prime[N(v)]\cong (K_d,-)$; and
			\item[(b)] $\rho(\Sigma-v)=d-1$.
		\end{itemize}
	\end{theorem}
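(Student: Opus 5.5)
Write $d=d(v)$, $q=q_v(\Sigma)$ and $r=\rho(\Sigma-v)$. By Theorem \ref{Spec_Rad_Theta} we have $\rho^2(\Sigma)\le r^2+\theta^*$, where $\theta^*$ is the unique root of $g_v$ in $[d,\infty)$. Since $g_v$ is strictly increasing on $[d,\infty)$, it suffices to show $g_v(2d-1)\ge 0$, i.e.
\[(d-1)^2\bigl(r^2+2d-1\bigr)\ge 4q^2 .\]
For $d=1$ there are no triangles at $v$, so $q=0$ and $\theta^*=d=1=2d-1$, which gives the claim. For $d\ge 2$, work in $\Sigma'$ as in the proof of Theorem \ref{Spec_Rad_Theta}: $2q=a^TBa=2(|E^+(H)|-|E^-(H)|)$ with $H=\Sigma'[N(v)]$. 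Since $H$ is an induced subgraph of $\Sigma'-v$, interlacing gives $r\ge\rho(H)$. Apply Lemma \ref{Spec_Rad_lower} to $H$, which has $d\ge 2$ vertices:
\[\rho^2(H)\ge \frac{4q^2}{(d-1)^2}-(2d-1).\]
This is exactly $(d-1)^2(\rho^2(H)+2d-1)\ge 4q^2$, so $g_v(2d-1)\ge 0$ and hence $\theta^*\le 2d-1$. Combined with the first inequality, this proves the bound.

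For the equality case with $d\ge 2$, equality forces both $\rho^2(\Sigma)=r^2+\theta^*$ and $\theta^*=2d-1$. The second gives $g_v(2d-1)=0$. Hence $r=\rho(H)$ and equality holds in Lemma \ref{Spec_Rad_lower}, so $H\cong(K_d,\pm)$; this is condition (a). Then $q=\pm\binom d2$ and $\rho(H)=d-1$, so $r=d-1$, which is condition (b).

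Conversely, assume (a) and (b). Then $4q^2=d^2(d-1)^2$, and $g_v(2d-1)=(d-1)^2((d-1)^2+2d-1)-d^2(d-1)^2=0$, so $\theta^*=2d-1$. It remains to show $\rho^2(\Sigma)=r^2+\theta^*$, and for this I will use Theorem \ref{matrix_eq}, i.e.\ verify $B^2a=(d-1)^2a$. Suppose $H\cong(K_d,+)$; the case $(K_d,-)$ is symmetric after replacing $B$ by $-B$. Then $x=a/\sqrt d$ satisfies $x^TBx=d-1=\rho(B)$. Since the Rayleigh quotient of the symmetric matrix $B$ attains its maximum modulus at $x$, the vector $x$ is an eigenvector of $B$ for $\pm(d-1)$. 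Thus $Ba=(d-1)a$ and $B^2a=(d-1)^2a$. Theorem \ref{matrix_eq} then gives $\rho^2(\Sigma)=(d-1)^2+2d-1$, which is the desired equality.

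I expect the main obstacle to be the sufficiency direction. The key point there is realizing that the maximal Rayleigh quotient forces $a$ to be an eigenvector of $B$. A further step to check is that the converse uses Theorem \ref{matrix_eq} with $A$ written in the switched form of $\Sigma'$.
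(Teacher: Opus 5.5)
Your proof is correct and follows the paper's argument essentially step for step for the inequality and for the necessity of (a) and (b). The only difference is in the converse. After the same Rayleigh-quotient observation $Ba=\pm(d-1)a$, the paper directly checks that $(1,a^T)^T$ (resp.\ $(-1,a^T)^T$) is an eigenvector of $A(\Sigma')$ for $d$ (resp.\ $-d$) and closes with the already-proved upper bound, without computing $\theta^*$; you instead compute $\theta^*=2d-1$ and invoke Theorem~\ref{matrix_eq}, whose proof here ($\lambda=m/(\theta^*-\nu)=\pm d$) yields the same eigenvector up to sign.
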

	\begin{proof}
		Let $\Sigma^\prime$ be the signed graph obtained from $\Sigma$ by switching with respect to the set $N^-(v)$. Then the adjacency matrix of $\Sigma^\prime$ can be written in the block form
		\[
		A(\Sigma^\prime)=
		\begin{pmatrix}
			0 & a^T\\
			a & B
		\end{pmatrix},
		\]
		where $B=A(\Sigma^\prime-v)$ and $a$ is the column vector whose entries indicate the neighbours of $v$ in $\Sigma^\prime-v$.
		
		Let $\theta^*$ be the largest real root of $g_v(x)$.
		If $d(v)=1$, then $t_v^+=t_v^-=0$, and hence we must have
		\[
		g_v(x)=(x-d(v))^2\bigl(\rho^{2}(\Sigma-v)+x\bigr).
		\]
		Since $g_v(d(v))=0$, $\theta^*=d(v)$, and the result follows immediately.
		
		So assume that $d(v)\ge 2$. Since $2d(v)-1>d(v)$, Theorem~\ref{Spec_Rad_Theta} implies that it suffices to show that $g_v(2d(v)-1)\ge 0.$
		We have
		\begin{align}
			\rho^2(\Sigma-v)
			&=\rho^2(\Sigma^\prime-v) \notag\\
			&\ge \rho^2(\Sigma^\prime[N(v)]) \label{Nbr_Ineq_1} \\
			&\ge
			\frac{4\bigl(|E^{+}(\Sigma^\prime[N(v)])|-|E^{-}(\Sigma^\prime[N(v)])|\bigr)^2}
			{(d(v)-1)^2}
			-(2d(v)-1) \label{Nbr_Ineq_2}\\
			&=
			\frac{4\vert q_v(\Sigma)\vert^2}{(d(v)-1)^2}
			-(2d(v)-1)\notag,
		\end{align}
		where the second inequality follows from Lemma~\ref{Spec_Rad_lower}. The last equality follows from the fact that 
		\[
		|E^{+}(\Sigma^\prime[N(v)])|-|E^{-}(\Sigma^\prime[N(v)])|
		= q_v(\Sigma')=q_v(\Sigma).
		\]
		
		Multiplying the above inequality by $(d(v)-1)^2$ yields
		\[
		(d(v)-1)^2\bigl(\rho^2(\Sigma-v)+2d(v)-1\bigr)
		\ge 4\vert q_v(\Sigma)\vert^2.
		\]
		Since
		\[
		g_v(2d(v)-1)
		=(d(v)-1)^2\bigl(\rho^2(\Sigma-v)+2d(v)-1\bigr)
		-4\vert q_v(\Sigma) \vert^2,
		\]
		and $d(v)-1\ge 1$, it follows that
		\[g_v(2d(v)-1)\ge 0.\]
		Therefore, $\theta^*\le 2d(v)-1$. Combining this with Theorem~\ref{Spec_Rad_Theta} completes the proof of the inequality. 
		
		For the equality, let $d(v) \geq 2$. Suppose first that
		\[ \rho^2(\Sigma)=\rho^2(\Sigma-v)+2d(v)-1. \]
		From the proof of the inequality,
		\[ \rho^2(\Sigma) \leq \rho^2(\Sigma-v)+\theta^* \leq \rho^2(\Sigma-v)+2d(v)-1. \]
		Hence $\theta^*=2d(v)-1$, and therefore
		\[ g_v(2d(v)-1)=0. \]
		It follows that
		\[ \rho^2(\Sigma-v) = \frac{4\vert q_v(\Sigma)\vert^2}{(d(v)-1)^2} -(2d(v)-1).\]
		Together with \eqref{Nbr_Ineq_1} and \eqref{Nbr_Ineq_2}, this gives
		\[ \rho^2(\Sigma-v) =\rho^2(\Sigma^\prime-v) \geq \rho^2(\Sigma^\prime[N(v)])
		\geq \frac{4\vert q_v(\Sigma)\vert^2}{(d(v)-1)^2} -(2d(v)-1) =\rho^2(\Sigma-v).\]
		Thus equality holds in both \eqref{Nbr_Ineq_1} and \eqref{Nbr_Ineq_2}. By Lemma~\ref{Spec_Rad_lower}, condition (a) follows. Equality in \eqref{Nbr_Ineq_1} then gives
		\[ \rho^2(\Sigma-v) =\rho^2(\Sigma^\prime[N(v)]) =(d(v)-1)^2, \]
		and hence condition (b) follows.
		
		Conversely, suppose that conditions (a) and (b) hold. By condition (b) and switching invariance,
		\[ \rho(B)=\rho(\Sigma^\prime-v)=\rho(\Sigma-v)=d(v)-1. \]
		
		First suppose that $\Sigma^\prime[N(v)]\cong(K_d,+)$. Then
		\[ a^TBa=d(v)(d(v)-1) \quad\text{and}\quad a^Ta=d(v). \]
		
		Thus the Rayleigh quotient of $a$ for $B$ is $d(v)-1=\rho(B)$. Since $B$ is symmetric, equality in the Rayleigh quotient gives
		\[ Ba=(d(v)-1)a. \]
		Consequently,
		\[ 
		A(\Sigma^\prime)
		\begin{pmatrix}
			1\\
			a
		\end{pmatrix}
		=
		\begin{pmatrix}
			a^Ta\\
			a+Ba
		\end{pmatrix}
		=d(v)
		\begin{pmatrix}
			1\\
			a
		\end{pmatrix}. \]
		Hence $d(v)$ is an eigenvalue of $A(\Sigma^\prime)$.
		
		Now suppose that $\Sigma^\prime[N(v)]\cong(K_d,-)$. Then
		\[ a^TBa=-d(v)(d(v)-1) \quad\text{and}\quad a^Ta=d(v). \]
		Thus the Rayleigh quotient of $a$ for $B$ is $-(d(v)-1)$, whose absolute value is $\rho(B)$. Since $B$ is symmetric, equality in the Rayleigh quotient gives
		\[ Ba=-(d(v)-1)a. \]
		Consequently,
		\[
		A(\Sigma^\prime)
		\begin{pmatrix}
			-1\\
			a
		\end{pmatrix}
		=
		\begin{pmatrix}
			a^Ta\\
			-a+Ba
		\end{pmatrix}
		=-d(v)
		\begin{pmatrix}
			-1\\
			a
		\end{pmatrix}.\]
		Hence $-d(v)$ is an eigenvalue of $A(\Sigma^\prime)$.
		
		Therefore, in either case,
		\[ \rho(\Sigma)=\rho(\Sigma^\prime)\geq d(v). \]
		On the other hand, the inequality already proved and condition (b) give
		\[ \rho(\Sigma) \leq \sqrt{(d(v)-1)^2+2d(v)-1} =d(v).\]
		Thus
		\[\rho^2(\Sigma)=\rho^2(\Sigma-v)+2d(v)-1,\]
		which completes the equality case.
	\end{proof}
	
	\begin{theorem}
		Let $d(v)=1$. Then equality in Theorem \ref{Spec_Rad_Bound1} holds if and only if $\rho(\Sigma-v) = \sqrt{d(u)-1}$ where $u$ is the unique neighbor of $v$ in $\Sigma$.
	\end{theorem}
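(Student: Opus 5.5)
The plan is to read the $d(v)=1$ case as the special case of Theorem~\ref{matrix_eq} in which $a$ is a standard basis vector. The condition $B^2a=\rho_2^2a$ then reduces to a statement about the $u$-th column of $B^2$, whose diagonal entry is the degree of $u$ in $\Sigma-v$.

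\textbf{Setup.} As in the proof of Theorem~\ref{Spec_Rad_Bound1}, switch $\Sigma$ with respect to $N^-(v)$ to get $\Sigma'$, and write
\[
A(\Sigma')=\begin{pmatrix}0&a^T\\ a&B\end{pmatrix},\qquad B=A(\Sigma'-v).
\]
Since $d(v)=1$, we have $a=e_u$, the standard basis vector at the unique neighbour $u$. Switching preserves the spectrum, so $\rho(\Sigma)=\rho(A(\Sigma'))$ and $\rho(\Sigma-v)=\rho(B)=\rho_2$. Next compute the quantities of Lemma~\ref{Spec_Rad_Sym1}. We have $\nu=a^Ta=1$ and $m=a^TBa=B_{uu}=0$, so $\mu=0$. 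The proof of Theorem~\ref{Spec_Rad_Bound1} already shows that in this case $\theta^*=d(v)=1=2d(v)-1$. Hence equality in Theorem~\ref{Spec_Rad_Bound1} means exactly $\rho(A)^2=\rho_2^2+\theta^*$. By Theorem~\ref{matrix_eq}, this is equivalent to
\[
B^2e_u=\rho_2^2\,e_u .
\]

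\textbf{Key observation.} It remains to show that $B^2e_u=\rho_2^2e_u$ holds if and only if $\rho_2^2=d(u)-1$. The diagonal entry is
\[
e_u^TB^2e_u=\sum_w B_{uw}^2=d_{\Sigma-v}(u)=d(u)-1.
\]
\emph{Forward direction.} If $B^2e_u=\rho_2^2e_u$, take the $u$-th coordinate: this gives $\rho_2^2=d(u)-1$, that is, $\rho(\Sigma-v)=\sqrt{d(u)-1}$.

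\emph{Reverse direction.} Suppose $\rho_2^2=d(u)-1$. The matrix $B^2$ is symmetric positive semidefinite with largest eigenvalue $\rho_2^2$. The unit vector $e_u$ attains this maximum of the Rayleigh quotient, since $e_u^TB^2e_u=d(u)-1=\rho_2^2$. Expanding $e_u$ in an orthonormal eigenbasis of $B^2$, equality in the Rayleigh quotient forces all weight onto eigenvectors for $\rho_2^2$. Therefore $B^2e_u=\rho_2^2e_u$, which is the same argument the paper uses for $Ba=(d-1)a$ in Theorem~\ref{Spec_Rad_Bound1}.

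\textbf{Degenerate case.} If $d(u)=1$, then $\rho_2=0$ corresponds to $B=0$. In that case $B^2e_u=0$ holds trivially. The $\rho_2=0$ branch of Theorem~\ref{necessity} covers this, so no separate treatment is needed.

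The only real obstacle is recognising that $\theta^*=1$, so that equality in Theorem~\ref{Spec_Rad_Bound1} is exactly equality in Lemma~\ref{Spec_Rad_Sym1}. After that, everything is bookkeeping plus the Rayleigh-quotient equality argument for the positive semidefinite matrix $B^2$.
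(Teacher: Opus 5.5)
Your proposal is correct, and the forward direction is the paper's: $\nu=1$ and $m=B_{uu}=0$ give $\theta^*=1=2d(v)-1$. The paper uses this silently; you verify it. Theorem~\ref{matrix_eq} then gives $B^2e_u=\rho_2^2e_u$, and the $u$-th coordinate gives $\rho_2^2=(B^2)_{uu}=d(u)-1$.

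The converse is where you take a different route. You use Rayleigh-quotient equality for $B^2$ to upgrade $(B^2)_{uu}=\rho_2^2=\lambda_{\max}(B^2)$ to $B^2e_u=\rho_2^2e_u$. You then invoke the sufficiency half of Theorem~\ref{matrix_eq}, namely Theorem~\ref{necessity}. Here its construction produces the explicit eigenvector $(1,(\lambda e_u+Be_u)^T)^T$ with $\lambda=\sqrt{\rho_2^2+1}$.

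The paper never uses Theorem~\ref{necessity} for this converse. It notes that the lower-right block $e_1e_1^T+B^2$ of $A(\Sigma')^2$ has $(u,u)$ entry $1+d_{\Sigma-v}(u)=d(u)$. Interlacing and the Rayleigh quotient then give $\rho^2(\Sigma)\ge d(u)$ for \emph{every} signed graph. The hypothesis $\rho_2^2=d(u)-1$ turns this into $\rho^2(\Sigma)\ge\rho_2^2+1$, which the upper bound of Theorem~\ref{Spec_Rad_Bound1} matches.

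What each route buys:
\begin{itemize}
\item The paper's converse is shorter and relies only on the inequality, not on the eigenvector construction. It also makes visible that the condition is one-sided: $\rho_2^2\ge(B^2)_{uu}=d(u)-1$ always holds, so the hypothesis amounts to $\rho_2^2\le d(u)-1$.
\item Your version is more uniform, running both directions through the single equivalence $B^2e_u=\rho_2^2e_u \iff (B^2)_{uu}=\rho_2^2$. It also exhibits an explicit eigenvector attaining $\rho(\Sigma)$ in the equality case.
\end{itemize}

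Your separate degenerate-case remark is unnecessary, since the Rayleigh argument already covers $\rho_2=0$.
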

	\begin{proof}
		Since $d(v)=1$, order the unique neighbor $u$ of $v$ first so that the vector $a$ can be chosen to be the vector $e_1$ which is the vector with 1 in the first coordinate and zero elsewhere. Denote by $\rho$, $\rho_2$ the spectral radii of $A(\Sigma')$, $A(\Sigma'-v)$ respectively. \\
		Suppose that the equality $\rho^2(\Sigma) = \rho^2(\Sigma-v) +1$ holds. Then by Theorem \ref{matrix_eq}, $B^2 a = \rho_2 ^2 a$. The first coordinates of L.H.S. and R.H.S. of this equation yield the required condition.

		Conversely, suppose $\rho_2^2 = d(u)-1$. Then
		\begin{equation*}
			A^2(\Sigma^\prime)=
			\begin{pmatrix}
				e_1^Te_1 & e_1^TB\\
				Be_1 & e_1 e_1^T+B^2
			\end{pmatrix}
		\end{equation*}
		By Cauchy Interlacing Theorem,
		$\rho^2 \geq \lambda_1(e_1 e_1^T+B^2)$. The Rayleigh Quotient inequality gives,
		\begin{align*}
			\lambda_1(e_1 e_1^T+B^2) &\geq \dfrac{e_1^T(e_1 e_1^T+B^2)e_1}{e_1^T e_1}\\
			&=\dfrac{e_1^Te_1 e_1^Te_1+e_1^TB^2e_1}{1}\\
			&=1+d_{\Sigma-v}(u)=d(u)
		\end{align*}
		Hence we obtain $\rho^2 \geq d(u)= \rho_2^2 +1$. We already have $\rho^2 \leq d(u)= \rho_2^2 +1$ from Theorem \ref{Spec_Rad_Bound1}. Hence the equality holds.
	\end{proof}
	
	The following result can be derived from the above Theorem~\ref {Spec_Rad_Bound1}, which improves Theorem~\ref {Remarks_result} in (\cite{lan2023remarks}, Theorem 1.10).
	\begin{corollary}\label{Remarks_result_coro}
		Let $\Sigma$ be a connected signed graph. Let $v$ be a vertex such that $|\lambda_{n-1}(\Sigma-v)|\leq \lambda_{1}(\Sigma-v) $. Then $$\lambda_1(\Sigma)\leq \sqrt{\lambda_1^2(\Sigma-v)+2d(v)-1}.$$
	\end{corollary}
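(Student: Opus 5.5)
The plan is to derive the corollary from Theorem~\ref{Spec_Rad_Bound1} by squeezing $\lambda_1(\Sigma)$ between two spectral radii.

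First, since $\lambda_1(\Sigma)\le\rho(\Sigma)$ always holds, Theorem~\ref{Spec_Rad_Bound1} immediately gives
\[
\lambda_1(\Sigma)\le\rho(\Sigma)\le\sqrt{\rho^2(\Sigma-v)+2d(v)-1}.
\]

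Second, I identify $\rho(\Sigma-v)$ with $\lambda_1(\Sigma-v)$. The graph $\Sigma-v$ has $n-1$ vertices, so its least eigenvalue is $\lambda_{n-1}(\Sigma-v)$. By definition, $\rho(\Sigma-v)=\max\{\lambda_1(\Sigma-v),-\lambda_{n-1}(\Sigma-v)\}$, and the hypothesis $|\lambda_{n-1}(\Sigma-v)|\le\lambda_1(\Sigma-v)$ says exactly that this maximum is $\lambda_1(\Sigma-v)$. Here $\lambda_1(\Sigma-v)\ge 0$ is automatic, since the trace of the adjacency matrix is $0$. Substituting $\rho(\Sigma-v)=\lambda_1(\Sigma-v)$ into the display yields the claimed bound.

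Third, I handle the hypothesis of Theorem~\ref{Spec_Rad_Bound1} that $v$ is non-isolated. If $\Sigma$ is connected with $n\ge 2$, every vertex is non-isolated, so the theorem applies directly. If $n=1$, then $\Sigma-v$ is empty; I will either remark that the statement is vacuous or exclude this trivial case.

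I do not expect a genuine obstacle. The only points needing care are the indexing of the smallest eigenvalue of the $(n-1)$-vertex graph $\Sigma-v$ and the trivial non-isolated check. To justify that this improves Theorem~\ref{Remarks_result}, I would note that if $\Sigma-v$ is balanced, then it is switching equivalent to an unsigned graph. By Perron--Frobenius, $|\lambda_{n-1}|\le\lambda_1$ in that case, so the hypothesis here is weaker than balance.
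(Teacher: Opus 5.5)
Your proof is correct and is the derivation the paper intends; the paper states the corollary without proof, noting only that it follows from Theorem~\ref{Spec_Rad_Bound1}. You chain $\lambda_1(\Sigma)\le\rho(\Sigma)$ with that theorem, use the hypothesis to get $\rho(\Sigma-v)=\lambda_1(\Sigma-v)$, and use connectedness (with $n\ge 2$) only to ensure $v$ is non-isolated.
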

	
	From the above Corollary \ref{Remarks_result_coro}, one can get the Theorem \ref{conjec_result} in \cite{sun2020conjecture}.
	\begin{corollary}
		Let $G$ be a connected graph and $v$ be any vertex in $G$. Then 
		$$ \lambda_{1}(G) \leq \sqrt{\lambda_{1}^2(G-v)+2d(v)-1}.$$
	\end{corollary}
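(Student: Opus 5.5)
The plan is to derive this as a direct specialization of Corollary~\ref{Remarks_result_coro} to the all-positive signing. Let $G$ be connected with a vertex $v$, and put $\Sigma=(G,+)$. Then $\Sigma$ is a connected signed graph. Deleting $v$ gives $\Sigma-v=(G-v,+)$, so $\lambda_1(\Sigma)=\lambda_1(G)$ and $\lambda_1(\Sigma-v)=\lambda_1(G-v)$. Both adjacency matrices are nonnegative, and this is the only structure we will use.

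The key step is to check the hypothesis of Corollary~\ref{Remarks_result_coro}. We need $|\lambda_{n-1}(\Sigma-v)|\le\lambda_1(\Sigma-v)$, where $\Sigma-v$ has $n-1$ vertices, so $\lambda_{n-1}(\Sigma-v)$ is its least eigenvalue. For a nonnegative symmetric matrix $M$, the Perron--Frobenius theorem gives $\rho(M)=\lambda_1(M)$. Hence every eigenvalue $\lambda$ of $M$ satisfies $|\lambda|\le\lambda_1(M)$, and in particular $|\lambda_{\min}(M)|\le\lambda_1(M)$.

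A self-contained route avoids Perron--Frobenius and uses Lemma~\ref{Spec_Rad_Rayleigh}. Take a unit eigenvector $x$ for the least eigenvalue and let $|x|$ be its entrywise absolute value. Nonnegativity of the entries gives
\[
|\lambda_{n-1}|=|x^TMx|\le |x|^TM|x|\le\lambda_1(M).
\]
This settles the hypothesis. The edge case where $G-v$ is empty or edgeless is harmless: then every eigenvalue is $0$ and the hypothesis holds trivially.

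Applying Corollary~\ref{Remarks_result_coro} to $\Sigma=(G,+)$ and $v$ then gives
\[
\lambda_1(G)=\lambda_1(\Sigma)\le\sqrt{\lambda_1^2(\Sigma-v)+2d(v)-1}=\sqrt{\lambda_1^2(G-v)+2d(v)-1}.
\]
Two degenerate cases need a remark. If $v$ is isolated, then $G$ is a single vertex because it is connected. If $G=K_1$, the right-hand side is formally not real, and one notes the statement implicitly assumes $d(v)\ge1$. This is the same non-isolated assumption used in Theorem~\ref{Spec_Rad_Bound1}, on which the corollary rests.

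I do not expect a genuine obstacle. The only point needing care is justifying $\rho=\lambda_1$ for unsigned graphs, which the Perron--Frobenius or Rayleigh-quotient argument above handles. Alternatively, one can bypass the previous corollary entirely: since $\rho(\Sigma)=\lambda_1(G)$ and $\rho(\Sigma-v)=\lambda_1(G-v)$ for nonnegative matrices, the claim follows immediately from Theorem~\ref{Spec_Rad_Bound1}.
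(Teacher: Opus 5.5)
Your proposal is correct and follows the paper's route: the paper obtains this corollary by specializing Corollary~\ref{Remarks_result_coro} to the all-positive signing $(G,+)$. Your Perron--Frobenius (or $|x|$-Rayleigh) argument supplies the hypothesis $|\lambda_{n-1}(G-v)|\le\lambda_1(G-v)$, which the paper leaves unstated, and your alternative of applying Theorem~\ref{Spec_Rad_Bound1} directly with $\rho=\lambda_1$ for nonnegative matrices is just the one-step version of the same chain.
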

	
	\begin{remark}\emph{
			The following are a few examples where the conditions in Theorem \ref{Spec_Rad_Bound1} are satisfied and hence the equality in the bound is attained.
			\begin{enumerate}
				\item Let $\Sigma$ be a signed graph switching equivalent to $(K_n,+)$ that is $\Sigma$ is a complete balanced graph on $n$ vertices. Then $\rho(\Sigma)=n-1$ and for any $v \in V(K_n)$, we have $\rho(\Sigma-v)=n-2$. 
				\item Let $\Sigma$ be a signed graph switching equivalent to $(K_n,-)$-the complete graph on $n$ vertices with all edges given negative signs. Here, again $\rho(\Sigma)=n-1$ and $\rho(\Sigma-v)=n-2$ for any vertex $v \in V(K_n)$.
				\item Let $n \geq 4$ and  $\Sigma_n$ denote a signed graph switching equivalent to the signed graph obtained from $(K_{n-1},+)$ and vertex $u$ by joining $u$ with  $v_1$ by a positive edge and with $v_2$ by a negative edge, where $v_1, v_2$ are two arbitrary vertices of the complete graph $(K_{n-1},+)$. Choose $v$ to be a vertex in the clique different from $v_1, v_2$. It can be seen that $\spec(\Sigma_n)=\{(n-2)^{(1)},1^{(1)},-1^{(n-3)},-2^{(1)}\}$. Clearly for $n \geq 5$, $\Sigma_n-v$ is switching equivalent to $\Sigma_{n-1}$. Thus, $\sqrt{\rho^2(\Sigma_n-v)+2d(v)-1}=\sqrt{(n-3)^2+2(n-2)-1}=n-2=\rho(\Sigma_n)$
		\end{enumerate}}
	\end{remark}
	
	Finally, we give another upper bound for $\rho(\Sigma)$ which might, at times, beat the above bound.
	\begin{theorem}\label{Spec_Rad_Bound2}
		Let $\Sigma$ be a signed graph and $v$ be a non-isolated vertex. Then $$ \rho(\Sigma) \leq \sqrt{\rho^2(\Sigma-v)+d(v)+\dfrac{2\vert q_v(\Sigma) \vert}{\sqrt{\rho^2(\Sigma-v)+d(v)}}}$$
	\end{theorem}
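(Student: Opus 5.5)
Write $r:=\rho^2(\Sigma-v)$, $d:=d(v)$ and $q:=|q_v(\Sigma)|$. By Theorem \ref{Spec_Rad_Theta} we already know $\rho^2(\Sigma)\le r+\theta^*$, where $\theta^*$ is the unique root of $g_v$ in $[d,\infty)$ and $g_v$ is strictly increasing there. So it suffices to prove
\[
\theta^*\le d+\frac{2q}{\sqrt{r+d}}.
\]
Because $g_v$ is increasing on $[d,\infty)$ and $\theta^*$ is its only root there, this reduces to checking $g_v(x_0)\ge 0$ at $x_0:=d+2q/\sqrt{r+d}$, which satisfies $x_0\ge d$.

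The computation is short. We have $x_0-d=2q/\sqrt{r+d}$, so
\[
g_v(x_0)=\frac{4q^2}{r+d}\,(r+x_0)-4q^2=\frac{4q^2}{r+d}\,(x_0-d)\ge 0 .
\]
Hence $\theta^*\le x_0$, and substituting into Theorem \ref{Spec_Rad_Theta} gives the stated bound. The case $q=0$ is consistent with this: then $x_0=d=\theta^*$, and we recover the second part of Theorem \ref{Spec_Rad_Theta}.

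The only point needing care is that $r+d>0$, so that the expression in the statement is defined. This holds because $v$ is non-isolated, so $d\ge 1$. There is no real obstacle here.

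An alternative route is to bound $\theta^*$ implicitly: from $g_v(\theta^*)=0$ we get $\theta^*-d=2q/\sqrt{r+\theta^*}\le 2q/\sqrt{r+d}$, using $\theta^*\ge d$. This gives the same result directly.
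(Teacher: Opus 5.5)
Your proof is correct and is essentially the paper's argument: the paper also evaluates $g_v$ at $x_0=d(v)+2|q_v(\Sigma)|/\sqrt{\rho^2(\Sigma-v)+d(v)}$, finds $g_v(x_0)=8|q_v(\Sigma)|^3/(\rho^2(\Sigma-v)+d(v))^{3/2}\ge 0$ (exactly your $\frac{4q^2}{r+d}(x_0-d)$), and then uses monotonicity of $g_v$ on $[d(v),\infty)$ to get $\theta^*\le x_0$ before invoking Theorem~\ref{Spec_Rad_Theta}. Your alternative route, $\theta^*-d=2q/\sqrt{r+\theta^*}\le 2q/\sqrt{r+d}$, is also valid and slightly more direct.
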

	\begin{proof} Let us consider the function $g_v(x)$ at $x_0= d(v)+\dfrac{2\vert q_v(\Sigma) \vert}{\sqrt{\rho^2(\Sigma-v)+d(v)}}$, we get
		\begin{align*}
			&~~~~~~~~ g\Bigg(d(v)+\dfrac{2\vert q_v(\Sigma) \vert}{\sqrt{\rho^2(\Sigma-v)+d(v)}}\Bigg)\\
			&=\Bigg(\dfrac{2\vert q_v(\Sigma) \vert}{\sqrt{\rho^2(\Sigma-v)+d(v)}}\Bigg)^2\Bigg(\rho^2(\Sigma-v)+d(v)+\dfrac{\vert 2q_v (\Sigma) \vert}{\sqrt{\rho^2(\Sigma-v)+d(v)}}\Bigg) -4(\vert q_v(\Sigma) \vert)^2\\
			&=\dfrac{8\vert q_v(\Sigma) \vert^3}{(\rho^2(\Sigma-v)+d(v))^{\frac{3}{2}}}\geq 0
		\end{align*}
		Since $x_0 \geq d(v)$ and $g(x)$ is increasing on $[d(v), \infty)$, we must have $\theta^* \leq x_0$.
		Applying Theorem \ref{Spec_Rad_Theta} gives the required inequality.
	\end{proof}
	
	\begin{remark}
		Sometimes Theorem \ref{Spec_Rad_Bound2} gives a better bound for the spectral radius compared to Theorem \ref{Spec_Rad_Bound1}. For instance, with a vertex $v$ with $d(v) \geq 2$ and $t_v^+=t_v^-$, we have $ \rho^2(\Sigma) \leq \rho^2(\Sigma-v)+d(v)+\dfrac{2\vert q_v(\Sigma) \vert}{\sqrt{\rho^2(\Sigma-v)+d(v)}}<\rho^2(\Sigma-v)+2d(v)-1$. 
	\end{remark}
	
	\section{Signed Chromatic Number }\label{section_chromatic}
	The well-known Hoffman's lower bound on the chromatic number $\chi(G)$ of a graph is stated as follows:
	\begin{theorem}\emph{\cite{hoffman2003eigenvalues}}\label{Hoffman_bound}
		For a non-empty graph $G$ on $n$ vertices $$\chi(G) \geq 1 +\dfrac{\lambda_1(G)}{-\lambda_n(G)}.$$
	\end{theorem}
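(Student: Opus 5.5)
The plan is the classical Hoffman interlacing argument for $\chi(G)\ge 1-\lambda_1/\lambda_n$, organized through a weighted quotient matrix. We may assume $G$ has an edge, so $\lambda_n<0$ (the trace is zero and $A\neq 0$). Put $k=\chi(G)$, fix a proper $k$-colouring with colour classes $V_1,\dots,V_k$, and let $x$ be a unit Perron eigenvector of $A=A(G)$ for $\lambda_1$. We may take $x\ge 0$ entrywise, and if $G$ is disconnected we work with the component realizing $\lambda_1$. Let $x_i$ be the restriction of $x$ to $V_i$, padded with zeros, and define the $n\times k$ matrix $S$ whose $i$-th column is $x_i/\|x_i\|$, discarding indices with $x_i=0$; this only lowers $k$ and helps. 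The columns of $S$ are orthonormal because the classes are disjoint.

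Next consider the $k\times k$ matrix $C=S^TAS$, with entries $C_{ij}=x_i^TAx_j/(\|x_i\|\|x_j\|)$. Each $V_i$ is independent, so $x_i^TAx_i=0$ and $C$ has zero diagonal, hence $\operatorname{tr} C=0$. The vector $u=(\|x_1\|,\dots,\|x_k\|)^T$ satisfies $Su=x$, so $Cu=S^TAx=\lambda_1S^Tx=\lambda_1u$. Thus $\lambda_1$ is an eigenvalue of $C$.

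Since $S$ has orthonormal columns, Cauchy interlacing (via the Rayleigh quotient, applied to the compression $S^TAS$) gives $\lambda_i(C)\ge\lambda_n(A)$ for all $i$. Then
\[
0=\operatorname{tr}C=\lambda_1+\sum_{i\ge 2}\lambda_i(C)\ge \lambda_1+(k-1)\lambda_n ,
\]
and since $-\lambda_n>0$ this rearranges to $k\ge 1+\lambda_1/(-\lambda_n)$.

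The main obstacle is ensuring that $\lambda_1$ genuinely appears as an eigenvalue of $C$ and that the remaining $k-1$ eigenvalues are each bounded below by $\lambda_n$. The first point is handled by the explicit vector $u$. The second follows from interlacing for compressions, which is standard since $\lambda_{\min}(S^TAS)\ge\lambda_{\min}(A)$ for $S^TS=I$. Taking $x$ nonnegative is not actually required; only the orthonormality bookkeeping after dropping zero blocks matters.
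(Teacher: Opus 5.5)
Your argument is correct. It is the standard Haemers-style interlacing proof:
\begin{itemize}
\item the columns of $S$ are orthonormal because the colour classes are disjoint;
\item $C=S^TAS$ has zero diagonal because the colour classes are independent;
\item $Su=x$ and $S^Tx=u$ give $Cu=\lambda_1 u$;
\item $\lambda_{\min}(S^TAS)\ge\lambda_n(A)$, and the trace count finishes the proof.
\end{itemize}
As you note, the Perron--Frobenius step and the passage to a component are unnecessary once you discard the classes on which $x$ vanishes.

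The paper itself gives no proof of this statement. It quotes Hoffman's bound and only remarks later that it is the case $m=1$ of Wocjan and Elphick's Theorem~\ref{encompassing}. That route differs from yours. Take $W$ to be the all-ones matrix in Corollary~\ref{unitary_corollary}. Then $A=\sum_{s=1}^{c-1}U_s(-A)U_s^{*}$ is a sum of $c-1$ unitary conjugates of $-A$, and Weyl's inequality gives $\lambda_1(A)\le (c-1)\lambda_1(-A)=-(c-1)\lambda_n(A)$ immediately.

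Your compression argument is more elementary: it uses only Rayleigh quotients and a trace, with no diagonal unitaries. However, it does not by itself yield the partial-sum version. The unitary-averaging identity, combined with the majorization of Lemma~\ref{majorization_+}, is what gives the stronger bound in Theorem~\ref{encompassing}, which involves the $m$ largest and $m$ smallest eigenvalues. That stronger bound is the one the paper then tries to transfer to signed graphs.
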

	Let $x=(x_{1}, x_{2},\cdots, x_{n} )$ be an element of $\mathbb{R}^{n}.$ Let $x^{\downarrow}$ be the vector obtained by rearranging the coordinates of $x$ in the non-increasing order. Thus, if $x^{\downarrow}=(x_{1}^{\downarrow}, x_{2}^{\downarrow},\cdots, x_{n}^{\downarrow}),$ then  $x_{1}^{\downarrow}\geq x_{2}^{\downarrow}\geq\cdots\geq x_{n}^{\downarrow}.$ 
	
	Let $x, y \in \mathbb{R}^{n}$. The vector $x$ is majorized by $y,$ in symbols $x \prec y,$ if \[ \sum_{i=1}^{m}x_{i}^{\downarrow} \leq \sum_{i=1}^{m}y_{i}^{\downarrow} ~~~~\mbox{for}~~ m=1, 2, \cdots, n-1; ~~\mbox{and}~~ \sum_{i=1}^{n}x_{i}^{\downarrow} = \sum_{i=1}^{n}y_{i}^{\downarrow}.\]

	For a matrix \(B\), let \(B^{*}\) denote the conjugate transpose of $B$. Let $A$ be a Hermitian matrix of order $n$, and  $\lambda_{1}\geq \lambda_{2}\geq \cdots\geq \lambda_{n}$ be  the eigenvalues of $A$. Consider the vector $\lambda^{\downarrow}(A)=(\lambda_{1}, \lambda_{2}, \cdots, \lambda_{n})$ and $\lambda^{\uparrow}(A)=(\lambda_{n}, \lambda_{n-1}, \cdots, \lambda_{1})$. Then the eigenvalues of any two Hermitian matrices satisfy the following majorization.
	\begin{lemma}\emph{\cite{bhatia2013matrix}}\label{majorization_+}
		If $A,B$ are Hermitian matrices, then the eigenvalues of $A, B$ and $A + B$ satisfy the following majorization relation:
		\[\lambda^{\downarrow}(A)+\lambda^{\uparrow}(B)\prec \lambda^{\downarrow}(A+B)\prec \lambda^{\downarrow}(A)+\lambda^{\downarrow}(B) \]
	\end{lemma}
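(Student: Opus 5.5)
The plan is to derive both majorizations from Ky Fan's maximum principle: for a Hermitian matrix $M$ and $1\le k\le n$,
\[
\sum_{i=1}^{k}\lambda_i(M)=\max\{\operatorname{tr}(U^*MU): U\in\mathbb{C}^{n\times k},\ U^*U=I_k\}.
\]
The trace conditions are immediate, since $\operatorname{tr}(A+B)=\operatorname{tr}A+\operatorname{tr}B$. The total sum of each of the three vectors $\lambda^{\downarrow}(A)+\lambda^{\uparrow}(B)$, $\lambda^{\downarrow}(A+B)$ and $\lambda^{\downarrow}(A)+\lambda^{\downarrow}(B)$ is therefore the same. It remains to compare partial sums of the sorted entries.

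For the right-hand majorization $\lambda^{\downarrow}(A+B)\prec\lambda^{\downarrow}(A)+\lambda^{\downarrow}(B)$, note that the vector $\lambda^{\downarrow}(A)+\lambda^{\downarrow}(B)$ is already in non-increasing order. Its $k$-th partial sum is therefore $\sum_{i\le k}\lambda_i(A)+\sum_{i\le k}\lambda_i(B)$. Choose $U$ attaining the Ky Fan maximum for $A+B$. Then
\[
\sum_{i\le k}\lambda_i(A+B)=\operatorname{tr}(U^*AU)+\operatorname{tr}(U^*BU),
\]
and each term on the right is bounded by the corresponding Ky Fan maximum for $A$ or for $B$. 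This gives the required partial-sum inequalities.

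For the left-hand majorization $\lambda^{\downarrow}(A)+\lambda^{\uparrow}(B)\prec\lambda^{\downarrow}(A+B)$, apply the result just proved to the pair $(A+B,-B)$:
\[
\lambda^{\downarrow}(A)\prec\lambda^{\downarrow}(A+B)+\lambda^{\downarrow}(-B)=\lambda^{\downarrow}(A+B)-\lambda^{\uparrow}(B).
\]
This is not yet the desired statement. We need to move $\lambda^{\uparrow}(B)$ to the other side, and majorization does not simply commute with adding vectors. So I would instead argue directly, as follows.

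Let $x=\lambda^{\downarrow}(A)+\lambda^{\uparrow}(B)$, and let $S$ be any index set of size $k$. Take $U$ to be orthonormal eigenvectors of $A$ for the indices in $S$. Then $\operatorname{tr}(U^*AU)=\sum_{i\in S}\lambda_i(A)$. Next, compare $\operatorname{tr}(U^*BU)$ with $\sum_{i\in S}\lambda_{n+1-i}(B)$. Bounding this directly for an arbitrary $S$ is where I expect the main difficulty. The clean route is Lidskii's theorem:
\[
\sum_{i\in S}\lambda_i(A+B)\le\sum_{i\in S}\lambda_i(A)+\sum_{i\le k}\lambda_i(B)
\]
for every index set $S$ of size $k$.

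Apply Lidskii to the pair $(A+B,-B)$. For every $S$ of size $k$ this gives
\[
\sum_{i\in S}\lambda_i(A)-\sum_{i\le k}\lambda_i(-B)\le\sum_{i\in S}\lambda_i(A+B)\le\sum_{i\le k}\lambda_i(A+B),
\]
and $\sum_{i\le k}\lambda_i(-B)=-\sum_{i\le k}\lambda_{n+1-i}(B)$. The $k$ largest entries of $x$ are $\lambda_i(A)+\lambda_{n+1-i}(B)$ for some set $S$. Since the smallest eigenvalues of $B$ sum to at most any $k$ of them, we get
\[
\sum_{i\in S}\lambda_{n+1-i}(B)\ge\sum_{i\le k}\lambda_{n+1-i}(B),
\]
and hence
\[
\sum_{i\in S}x_i\le\sum_{i\in S}\lambda_i(A)+\sum_{i\in S}\lambda_{n+1-i}(B).
\]
Combining these bounds with the Lidskii estimate gives the partial-sum inequalities.

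Citing Lidskii's theorem, as in \cite{bhatia2013matrix}, is the intended shortcut. The bookkeeping of the index sets is the delicate part, so I would double-check that step carefully.
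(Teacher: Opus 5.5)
The paper gives no proof of this lemma; it only cites Bhatia. Your trace argument and your Ky Fan proof of the right-hand majorization are correct. The left-hand half, however, has a genuine gap in its final step.

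Applying Lidskii to the ordered pair $(A+B,-B)$ gives, for every $S$ with $|S|=k$,
\[
\sum_{i\in S}\lambda_i(A)+\sum_{i\le k}\lambda_{n+1-i}(B)\le\sum_{i\le k}\lambda_i(A+B).
\]
This involves the sum of the $k$ \emph{smallest} eigenvalues of $B$. Your observation $\sum_{i\in S}\lambda_{n+1-i}(B)\ge\sum_{i\le k}\lambda_{n+1-i}(B)$ shows that the left side above is at most $\sum_{i\in S}x_i$. That is the wrong direction, since you need an upper bound on $\sum_{i\in S}x_i$. The line you then deduce, $\sum_{i\in S}x_i\le\sum_{i\in S}\lambda_i(A)+\sum_{i\in S}\lambda_{n+1-i}(B)$, is just the definition of $x$.

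So your chain only covers $S=\{1,\ldots,k\}$, but the $k$ largest entries of $x$ can sit elsewhere. Take $A=\operatorname{diag}(1,0)$ and $B=\operatorname{diag}(10,0)$, so $x=(1,10)$. For $k=1$ the relevant set is $S=\{2\}$. Your estimate yields only $\lambda_2(A)+\lambda_2(B)=0\le\lambda_1(A+B)$, not the needed $x_2=10\le\lambda_1(A+B)$.

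The repair is to let $-B$ carry the arbitrary index set and put the top-$k$ sum on $A+B$. Since $X+Y=Y+X$, Lidskii's inequality $\sum_{i\in S}\lambda_i(X+Y)\le\sum_{i\in S}\lambda_i(X)+\sum_{i\le k}\lambda_i(Y)$ can be used with $X=-B$ and $Y=A+B$:
\[
\sum_{i\in S}\lambda_i(A)\le\sum_{i\in S}\lambda_i(-B)+\sum_{i\le k}\lambda_i(A+B)=-\sum_{i\in S}\lambda_{n+1-i}(B)+\sum_{i\le k}\lambda_i(A+B).
\]
That is, $\sum_{i\in S}x_i\le\sum_{i\le k}\lambda_i(A+B)$ for every $S$ of size $k$, in particular for the positions of the $k$ largest entries of $x$. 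Together with the trace identity, this is the left-hand majorization.

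Equivalently, set $C=A+B$ and $D=-B$. The left-hand relation then reads $\lambda^{\downarrow}(D+C)-\lambda^{\downarrow}(D)\prec\lambda^{\downarrow}(C)$, which is exactly Lidskii's theorem in vector form. So the left half is Lidskii's theorem in disguise, and the index-set bookkeeping you were worried about disappears once the roles are assigned this way.
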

	Wocjan and Elphick \cite{wocjan2013new} proved the following results and used the majorization technique to obtain a bound on the chromatic number.
	\begin{theorem}\emph{\cite{wocjan2013new}}\label{diag_unitary} Let $G$ be a simple unsigned graph, and there exists a coloring of $G$ with $c$ colors. Then, there exist $c$ diagonal unitary matrices $U_{1}, U_{2},\cdots, U_{c} $ whose entries are $c^{th}$ roots of unity such that \[\sum_{s=1}^{c}U_{s}(W\odot A)U_{s}^{*}=0,\]
		where $W$ is an arbitrary self-adjoint matrix and $W\odot A$ denotes the entry-wise product of $W$ and $A.$
	\end{theorem}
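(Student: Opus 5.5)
We use the standard construction from Wocjan and Elphick. Fix a proper coloring $t:V(G)\to\{1,\dots,c\}$ and let $\omega=e^{2\pi i/c}$ be a primitive $c$-th root of unity. For $s=1,\dots,c$, define the diagonal matrix
\[
U_s=\mathrm{diag}\bigl(\omega^{s\,t(v_1)},\dots,\omega^{s\,t(v_n)}\bigr).
\]
Each $U_s$ is unitary, because its diagonal entries have modulus one, and every entry is a $c$-th root of unity, as the statement requires.

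Next we compute the sum entrywise. Conjugating by a diagonal unitary multiplies each entry, so for any matrix $M$,
\[
\bigl(U_s M U_s^{*}\bigr)_{jk}=\omega^{s\,t(v_j)}\,M_{jk}\,\overline{\omega^{s\,t(v_k)}}=\omega^{s(t(v_j)-t(v_k))}M_{jk}.
\]
Take $M=W\odot A$. Summing over $s$ gives
\[
\Bigl(\sum_{s=1}^{c}U_s(W\odot A)U_s^{*}\Bigr)_{jk}=W_{jk}A_{jk}\sum_{s=1}^{c}\omega^{s(t(v_j)-t(v_k))}.
\]
Since $W$ is only multiplied entrywise by fixed scalars, it plays no role in the argument; this is why $W$ can be an arbitrary self-adjoint matrix.

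We split into two cases according to $A_{jk}$.
\begin{itemize}
\item If $A_{jk}=0$, which covers non-adjacent pairs and the diagonal (no loops), the entry vanishes trivially.
\item If $A_{jk}\neq 0$, then $v_j\sim v_k$. Properness of the coloring gives $t(v_j)\neq t(v_k)$, so $r:=t(v_j)-t(v_k)$ satisfies $r\not\equiv 0 \pmod c$, because $|r|\le c-1$. Then $\omega^r\ne 1$ and $(\omega^r)^c=1$, so the geometric sum is
\[
\sum_{s=1}^{c}(\omega^{r})^{s}=\omega^r\,\frac{\omega^{rc}-1}{\omega^r-1}=0.
\]
\end{itemize}
Hence every entry of the sum vanishes, and the sum is the zero matrix.

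The only real point is the root-of-unity cancellation in the second case, and it needs nothing beyond $\omega^r\neq1$. If $c=1$, the graph has no edges, so $A=0$ and the claim is trivial.
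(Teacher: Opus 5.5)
Your proof is correct and matches the standard Wocjan–Elphick argument. The paper gives no proof of its own and only quotes the result from \cite{wocjan2013new}, which uses this same construction: the powers $U_s=U^s$ of the diagonal coloring unitary $U=\mathrm{diag}(\omega^{t(v_1)},\dots,\omega^{t(v_n)})$, together with the vanishing of $\sum_{s=1}^{c}\omega^{sr}$ for $r\not\equiv 0 \pmod c$ on edges. Note also that your choice gives $U_c=I$, and moving the $s=c$ term to the other side is exactly what yields Corollary~\ref{unitary_corollary}.
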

	\begin{corollary}\emph{\cite{wocjan2013new}}\label{unitary_corollary}
		Assume that there exists a coloring of $G$ with $c$ colors. Then, there exist $c-1$ diagonal unitary matrices $U_{1}, U_{2},\cdots, U_{c-1} $ whose entries are $c^{th}$ roots of unity such that \[\sum_{s=1}^{c-1}U_{s}(-W\odot A)U_{s}^{*}=W\odot A.\]
	\end{corollary}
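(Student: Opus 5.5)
The plan is to derive the corollary directly from Theorem~\ref{diag_unitary}, in three steps.

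First, apply Theorem~\ref{diag_unitary} to the given $c$-coloring. This yields $c$ diagonal unitaries $U_1,\dots,U_c$, with entries that are $c$th roots of unity, such that $\sum_{s=1}^{c}U_s(W\odot A)U_s^{*}=0$.

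Second, normalise one of these unitaries to the identity. For each $s$, set $V_s:=U_c^{*}U_s$. Each $V_s$ is again diagonal and unitary. Its entries are products of a $c$th root of unity with the conjugate of another $c$th root of unity, so they are still $c$th roots of unity. Conjugating the identity by $U_c^{*}$ gives
\[
\sum_{s=1}^{c}V_s(W\odot A)V_s^{*}=U_c^{*}\Big(\sum_{s=1}^{c}U_s(W\odot A)U_s^{*}\Big)U_c=0 .
\]
Here we use that diagonal unitaries commute and that $(U_c^*U_s)^*=U_s^*U_c$. After this step, $V_c=I$.

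Third, isolate the $s=c$ term. It equals $W\odot A$, so
\[
\sum_{s=1}^{c-1}V_s(W\odot A)V_s^{*}=-(W\odot A).
\]
Multiply by $-1$ and use linearity of $X\mapsto V_sXV_s^{*}$ to move the sign inside. This gives $\sum_{s=1}^{c-1}V_s(-W\odot A)V_s^{*}=W\odot A$, which is the claim with $U_s:=V_s$ for $s=1,\dots,c-1$.

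There is essentially no obstacle. The only point to check is that the normalisation $U_c^{*}U_s$ keeps the entries as $c$th roots of unity, and this holds because they form a group under multiplication. If Theorem~\ref{diag_unitary} is stated in a form where one unitary is already the identity (as in the Wocjan–Elphick construction, where $U_s=\mathrm{diag}(\omega^{s\,t(i)})$ and $U_c=I$), the normalisation step can be skipped.
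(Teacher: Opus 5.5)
Your argument is correct and is essentially the standard derivation in the cited source; the paper itself gives no proof. In Wocjan--Elphick's construction, $U_s=\mathrm{diag}(\omega^{s\,t(i)})$, one has $U_c=I$, and the corollary follows by moving that term to the other side and absorbing the sign, exactly as in your third step. Your normalisation $V_s=U_c^{*}U_s$ is a valid extra step that lets the deduction work directly from Theorem~\ref{diag_unitary} as stated here, which does not assert that any $U_s$ is the identity.
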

	\begin{theorem}\emph{\cite{wocjan2013new}}\label{encompassing}
		Let $G$ be a simple non-empty graph with $n$ vertices. Let $\lambda_{1}(G)\geq\lambda_{2}(G)\geq \cdots \geq \lambda_{n}(G) $ be the eigenvalues of $A(G).$ Then
		\[\chi(G)\geq 1+  \max_{m=1, 2, \cdots, n-1} \frac{\sum_{i=1}^{m}\lambda_{i}(G)}{-\sum_{i=1}^{m}\lambda_{n-i+1}(G)}, \] where $\chi(G)$ is the chromatic number of $G$.
	\end{theorem}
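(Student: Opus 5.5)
Let $c=\chi(G)$ and set $A=A(G)$. By Corollary~\ref{unitary_corollary}, applied with $W=J$ (the all-ones matrix, so that $W\odot A=A$), there exist $c-1$ diagonal unitary matrices $U_1,\dots,U_{c-1}$ with
\[
A=\sum_{s=1}^{c-1}U_s(-A)U_s^{*}.
\]
Each summand $U_s(-A)U_s^*$ is Hermitian and unitarily similar to $-A$. Hence its eigenvalues are $-\lambda_n(G)\ge -\lambda_{n-1}(G)\ge\cdots\ge-\lambda_1(G)$ in non-increasing order. The plan is to compare the eigenvalues of both sides via the right-hand majorization in Lemma~\ref{majorization_+}.

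First I extend the upper majorization $\lambda^{\downarrow}(X+Y)\prec\lambda^{\downarrow}(X)+\lambda^{\downarrow}(Y)$ to finitely many Hermitian summands by induction on the number of summands. This uses the fact that if $x\prec y$, then the partial sums of $x^{\downarrow}$ are bounded by those of $y^{\downarrow}$. The extension yields, for every $m=1,\dots,n-1$,
\[
\sum_{i=1}^{m}\lambda_i(G)\;\le\;\sum_{s=1}^{c-1}\sum_{i=1}^{m}\lambda_i\bigl(U_s(-A)U_s^*\bigr)=(c-1)\sum_{i=1}^{m}\bigl(-\lambda_{n-i+1}(G)\bigr).
\]
In the induction I only need the weaker statement that partial sums of the largest eigenvalues are subadditive, i.e.\ Ky Fan's inequality. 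This follows from Lemma~\ref{majorization_+} directly, since the vector $\lambda^{\downarrow}(X)+\lambda^{\downarrow}(Y)$ is already sorted, and so no reordering issue arises.

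Next I divide by the denominator. Since $\operatorname{tr}A=0$ and $G$ is non-empty, $\lambda_n(G)<0$. Moreover, for each $m\le n-1$ the sum $\sum_{i=1}^m\lambda_{n-i+1}(G)$ of the $m$ smallest eigenvalues is strictly negative. If it were $\ge 0$, all remaining eigenvalues would be at least the average of these and hence nonnegative, which forces all eigenvalues to be $\ge 0$. With zero trace this gives $A=0$, a contradiction. Dividing then gives
\[
c-1\ge \frac{\sum_{i=1}^m\lambda_i(G)}{-\sum_{i=1}^m\lambda_{n-i+1}(G)}
\]
for each $m$, and taking the maximum over $m$ proves the theorem.

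The main obstacle is the careful justification of the multi-summand Ky Fan step and the sign of the denominator. Both are routine, but they are where rigor is needed: for the former, the induction must be organized so that sorted vectors are compared; for the latter, the strict negativity argument above handles it.
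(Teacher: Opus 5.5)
Your argument is correct and is the Wocjan--Elphick proof that the paper cites without reproducing: take $W=J$ in Corollary~\ref{unitary_corollary}, then apply the Ky Fan half of Lemma~\ref{majorization_+}, extended by induction, to the $c-1$ unitary conjugates of $-A$. One step in your sign check needs reordering, because the trace must be used before, not after, concluding that the eigenvalues are nonnegative: if $\sum_{i=1}^m\lambda_{n-i+1}(G)\ge 0$, then the nonnegative eigenvalues $\lambda_1,\dots,\lambda_{n-m}$ sum to $-\sum_{i=1}^m\lambda_{n-i+1}(G)\le 0$ by zero trace, so they all vanish, whence $\lambda_1(G)=0$ and therefore $A=0$.
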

	
	In 2021, Wang et al. \cite{wang2021eigenvalues} proved Hoffman's lower bound version for signed graphs. 
	\begin{theorem}\emph{\cite{wang2021eigenvalues}}\label{Hoffman_signed}
		For a non-empty signed graph $\Sigma$ on $n$ vertices, $$\chi(\Sigma)\geq 1+ \dfrac{\lambda_1(\Sigma)}{-\lambda_n(\Sigma)}.$$
		where $\chi(\Sigma)$ denotes the signed chromatic number of $\Sigma$.
	\end{theorem}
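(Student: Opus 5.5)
The plan is to mimic the classical Hoffman argument, using a decomposition of $A=A(\Sigma)$ adapted to a signed coloring. Let $k=\chi(\Sigma)$ and fix a proper $k$-coloring $c:V\to M_k$. For each color value $s$, let $V_s=c^{-1}(s)$. Pair the classes $V_s$ and $V_{-s}$ for $s>0$ into $P_s=V_s\cup V_{-s}$, and let $P_0=V_0$ when $k$ is odd. The coloring condition then gives structural information inside each piece:
\begin{itemize}
\item $V_0$ is independent, since $0\ne \pm 0$ fails for both signs.
\item Inside $V_s$ there are no positive edges, since $c(u)=c(v)$ is forbidden when $\sigma=+1$.
\item Between $V_s$ and $V_{-s}$ there are no negative edges.
\end{itemize}
Switch $\Sigma$ at $\bigcup_{s>0}V_{-s}$, which is allowed because the spectrum is switching-invariant. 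After this switch, every edge inside a pair $P_s$ is negative. So for $k$ odd the vertex set splits into at most $\lceil k/2\rceil$ parts, and each part induces a graph with only negative edges (or no edges, for $P_0$).

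The key step I would aim for is an eigenvector-based inequality. Let $x$ be a unit eigenvector for $\lambda_1=\lambda_1(\Sigma)$ and let $x_j$ be its restriction to part $j$. Then
\[
\lambda_1=x^TAx=\sum_j x_j^TA_{jj}x_j+\sum_{i\ne j}x_i^TA_{ij}x_j .
\]
I would instead use Hoffman's original device: write $A=\sum_j D_j$-type decompositions, and consider the matrix $\sum$ obtained by flipping signs of $x$ on parts, i.e.\ the vectors $y^{(\epsilon)}=\sum_j \epsilon_j x_j$ with $\epsilon\in\{\pm1\}$, or, with roots of unity, $\omega^{jt}x_j$ for $t=0,\dots,p-1$. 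Averaging the quadratic forms of these vectors over $t$ kills the off-diagonal blocks and leaves
\[
\sum_{t=1}^{p-1}\bar y^{(t)*}Ay^{(t)}=-\lambda_1+p\sum_j x_j^TA_{jj}x_j .
\]
Each $y^{(t)}$ has norm $1$, so the left side is at least $(p-1)\lambda_n$. For the diagonal blocks, the genuine coloring classes $V_s$ contribute the bound in Hoffman's proof through independence. The negative-edge blocks need an extra argument: a block with only negative edges inside $P_s$, which is bipartite-like between $V_s$ and $V_{-s}$, should be handled by splitting $P_s$ back into $V_s$ and $V_{-s}$ with relative sign. After the switching, the edges inside $V_s$ become negative and the cross edges $V_s$–$V_{-s}$ become negative as well.

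Rather than pairing, the cleaner route is to work directly with the $k$ classes of $c$. Use diagonal unitaries as in Theorem \ref{diag_unitary}, modified so that the sign on class $V_{-s}$ relates to $V_s$. The target identity is $\sum_{t}U_tAU_t^*=$ something with controlled sign, so that $-\lambda_1\ge (k-1)\lambda_n$ follows. This gives $k\ge 1+\lambda_1/(-\lambda_n)$.

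The main obstacle is exactly this step. In signed graphs, edges inside a color class $V_s$ may exist, namely the negative ones, so the averaged matrix is not zero as in the unsigned case. I expect to show that the residual diagonal-block contribution has the right sign, i.e.\ $x_j^TA_{jj}x_j\le$ a quantity absorbed by $\lambda_n$. Concretely, I would try to bound $p\sum_j x_j^TA_{jj}x_j\ge p\lambda_n\sum_j\|x_j\|^2=p\lambda_n$ using $A_{jj}\succeq\lambda_n(A_{jj})I\succeq \lambda_n I$ by interlacing. Combined with the displayed identity, this yields $(p-1)\lambda_n\ge-\lambda_1+p\lambda_n$ only in the wrong direction, so the sign bookkeeping there is what I would have to get right first. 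The fallback is to take $x$ as the $\lambda_n$-eigenvector and the $U_t$-rotated vectors as test vectors for $\lambda_1$.
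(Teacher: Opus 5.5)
Your setup and the roots-of-unity identity $\sum_{t=1}^{p-1}y^{(t)*}Ay^{(t)}=-\lambda_1+p\sum_j x_j^TA_{jj}x_j$ are correct. However, the argument stops exactly where the signed case differs from Hoffman's, so as written there is a genuine gap, and neither of your repairs closes it. To conclude you need the \emph{upper} bound $\sum_j x_j^TA_{jj}x_j\le 0$. Interlacing ($A_{jj}\succeq\lambda_n I$) only gives a lower bound. For an arbitrary $\lambda_1$-eigenvector the diagonal term can be positive: a negative edge $uv$ inside a block with $x_ux_v<0$ contributes $-2x_ux_v>0$. The pairing $P_s=V_s\cup V_{-s}$ cannot be rescued at all. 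If its diagonal term could always be made $\le 0$, you would get $\lceil k/2\rceil\ge 1+\lambda_1/(-\lambda_n)$, which fails for $(K_n,+)$, where $\chi=n$, $\lambda_1=n-1$ and $\lambda_n=-1$. The fallback, seeding with a $\lambda_n$-eigenvector, only bounds $(k-1)\lambda_1+\lambda_n$ from below, which is not Hoffman's inequality.

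The missing idea is to switch according to the eigenvector, not according to the colouring. Take a unit $\lambda_1$-eigenvector $x$, let $\eta(v)=\operatorname{sign}(x_v)$ (with $\eta(v)=1$ if $x_v=0$), and set $D=\mathrm{diag}(\eta)$.
\begin{itemize}
\item The switched graph $\Sigma''$ has $A(\Sigma'')=DAD$, the same spectrum, and the entrywise nonnegative eigenvector $Dx=|x|$ for $\lambda_1$.
\item Since $M_k=-M_k$, the map $c''(v)=\eta(v)c(v)$ is still a proper $k$-colouring of $\Sigma''$. Indeed, $\eta(u)c(u)\neq\eta(u)\sigma(uv)\eta(v)\cdot\eta(v)c(v)$ if and only if $c(u)\neq\sigma(uv)c(v)$.
\end{itemize}
Now run your averaging on the $k$ unpaired colour classes of $c''$. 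Take $p=k$ and $\omega$ a primitive $k$-th root of unity, after indexing $M_k$ by $\{0,\dots,k-1\}$. Every class induces only negative edges (none for colour $0$), so each $A''_{jj}$ is entrywise nonpositive while $|x|_j\ge 0$. This gives $\sum_j |x|_j^TA''_{jj}|x|_j\le 0$, and hence $(k-1)\lambda_n\le-\lambda_1$. Since $\lambda_n<0$ for a non-empty signed graph, this is $\chi(\Sigma)\ge 1+\lambda_1/(-\lambda_n)$. Equivalently, in Haemers' interlacing form, the $k\times k$ quotient matrix built from $|x|$ on these classes has eigenvalue $\lambda_1$ and trace $\le 0$. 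The paper itself gives no proof of this statement; it only quotes it from Wang et al.
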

	
	One can observe that restricting $m=1$ in Theorem \ref{encompassing} gives Hoffman's bound. Wang et al., in their work, noted that such an improved bound is not directly possible in the case of signed graphs. Let $\Sigma=(K_n,-1)$ be the complete graph on $n \geq 3$ vertices with each of the edges given negative signs. In this case, for $m=n-1$, 
	$$1+   \frac{\sum_{i=1}^{m}\lambda_{i}(\Sigma)}{-\sum_{i=1}^{m}\lambda_{n-i+1}(\Sigma)} = 1+\frac{-\lambda_{n}(\Sigma)}{\lambda_{1}(\Sigma)}=1+\dfrac{n-1}{1}=n > \chi(\Sigma) = 2$$

	For an unsigned graph $G$ and a subgraph $H,$ the chromatic number satisfies the inequality $\chi(G)\geq \chi (H).$ The analogous result for signed graph is as follows:
	\begin{lemma}\label{Chromatic_lemmma}
		Let $\Sigma$ be a signed graph and $\Sigma_{1}$ be a subgraph of $\Sigma.$ Then $\chi(\Sigma)\geq \chi(\Sigma_{1}).$
	\end{lemma}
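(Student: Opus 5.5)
The plan is to restrict a proper coloring of $\Sigma$ to the subgraph. The key observation is that a $k$-coloring of a signed graph is defined only through the vertex map $c:V(\Sigma)\to M_k$ and a local condition on each edge. Passing to a subgraph keeps the vertices and edges of $\Sigma_1$ with the same signs, so every constraint in $\Sigma_1$ is already a constraint in $\Sigma$.

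Concretely, let $k=\chi(\Sigma)$ and let $c:V(\Sigma)\to M_k$ be a proper $k$-coloring of $\Sigma$. For a subgraph $\Sigma_1$, with $V(\Sigma_1)\subseteq V(\Sigma)$, $E(\Sigma_1)\subseteq E(\Sigma)$ and sign function $\sigma|_{E(\Sigma_1)}$, define $c_1:=c|_{V(\Sigma_1)}$. Take any edge $uv\in E(\Sigma_1)$. Then $uv\in E(\Sigma)$ with the same sign $\sigma(uv)$, so $c_1(u)=c(u)\neq\sigma(uv)c(v)=\sigma(uv)c_1(v)$. Hence $c_1$ is a proper $k$-coloring of $\Sigma_1$, and minimality in the definition gives $\chi(\Sigma_1)\le k=\chi(\Sigma)$.

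The only point needing care is the definition of the signed chromatic number. A proper coloring must use the specific palette $M_k$, not merely at most $k$ colors. So one should confirm that the restricted map is a valid $k$-coloring in the sense of the definition, namely a map into $M_k$ satisfying the edge condition, even if $c_1$ does not use all of $M_k$. The definition only requires the codomain to be $M_k$, so this holds automatically. Apart from this check, and from making explicit that \emph{subgraph} means the signs are inherited from $\Sigma$, I expect no real obstacle.
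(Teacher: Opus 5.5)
Your restriction argument is correct: a proper $k$-coloring $c$ of $\Sigma$ with $k=\chi(\Sigma)$ restricts to a map $V(\Sigma_1)\to M_k$. This map satisfies $c(u)\neq\sigma(uv)c(v)$ on every edge of $\Sigma_1$, since signs are inherited, so minimality gives $\chi(\Sigma_1)\le\chi(\Sigma)$. The paper states this lemma without proof as the evident analogue of the unsigned fact, and your argument is exactly the one implicitly intended; applying minimality directly at $k=\chi(\Sigma)$ also means you never need $M_k\subseteq M_{k+1}$, which fails for odd $k$ since $0\in M_k\setminus M_{k+1}$.
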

	The following result gives a lower bound on the chromatic number of signed graph.
	
	\begin{theorem}\label{Sigma+_Bound}
		Let $\Sigma$ be a signed graph with $n$ vertices and at least one positive edge. Let $\lambda_{1}(\Sigma_+)\geq\lambda_{2}(\Sigma_+)\geq \cdots \geq \lambda_{n}(\Sigma_+) $ be the eigenvalues of $A(\Sigma_+)$. Then \begin{equation}
			\chi(\Sigma)\geq 1+ \max_{m=1, 2, \cdots, n-1} \frac{\sum_{i=1}^{m}\lambda_{i}(\Sigma_+)}{-\sum_{i=1}^{m}\lambda_{n-i+1}(\Sigma_+)}.
		\end{equation} 
	\end{theorem}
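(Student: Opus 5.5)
The plan is to reduce the statement to the unsigned Wocjan--Elphick bound (Theorem~\ref{encompassing}) applied to the underlying unsigned graph of $\Sigma_+$, together with a comparison between $\chi(\Sigma)$ and the ordinary chromatic number of that graph. Let $G_+$ denote the unsigned graph with vertex set $V(\Sigma)$ and edge set $E^+(\Sigma)$. Then $A(\Sigma_+)=A(G_+)$, so the eigenvalues $\lambda_i(\Sigma_+)$ are exactly the eigenvalues of the simple graph $G_+$. Since $\Sigma$ has at least one positive edge, $G_+$ is non-empty and Theorem~\ref{encompassing} applies:
\[
\chi(G_+)\geq 1+\max_{m=1,\dots,n-1}\frac{\sum_{i=1}^{m}\lambda_i(\Sigma_+)}{-\sum_{i=1}^{m}\lambda_{n-i+1}(\Sigma_+)}.
\]
It therefore suffices to prove $\chi(\Sigma)\geq\chi(G_+)$.

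The first step is to pass to a subgraph. By Lemma~\ref{Chromatic_lemmma}, applied to the signed subgraph $\Sigma_+$ of $\Sigma$ (same vertices, only the positive edges), we have $\chi(\Sigma)\geq\chi(\Sigma_+)$.

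The second step is to compare $\chi(\Sigma_+)$ with $\chi(G_+)$. Let $c:V\to M_k$ be a proper $k$-coloring of the signed graph $\Sigma_+$. Every edge $uv$ of $\Sigma_+$ is positive, so the condition $c(u)\neq\sigma(uv)c(v)$ reads $c(u)\neq c(v)$. Hence $c$ is an ordinary proper coloring of $G_+$ using at most $|M_k|=k$ distinct values. Relabeling those values by $\{1,\dots,k\}$ gives a proper $k$-coloring of $G_+$, so $\chi(G_+)\leq\chi(\Sigma_+)$.

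Chaining the two steps gives $\chi(\Sigma)\geq\chi(\Sigma_+)\geq\chi(G_+)$, and combining this with the Wocjan--Elphick bound above yields the theorem. I expect no serious obstacle. The only points needing care are checking that Lemma~\ref{Chromatic_lemmma} applies to spanning subgraphs, which is immediate since restricting a coloring to fewer edges keeps it proper, and checking that $|M_k|=k$ in both parities, which holds by the definition of $M_k$. The hypothesis of at least one positive edge is used only to guarantee that $G_+$ is non-empty, so that $\lambda_n(\Sigma_+)<0$ and the denominators in the bound are nonzero.
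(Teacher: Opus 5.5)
Your proof is correct and follows the paper's argument: $\chi(\Sigma)\ge\chi(\Sigma_+)$ by Lemma~\ref{Chromatic_lemmma}, followed by the Wocjan--Elphick bound (Theorem~\ref{encompassing}) applied to the all-positive graph $\Sigma_+$. The paper leaves implicit one step that you make explicit, namely that a signed coloring of the all-positive $\Sigma_+$ is just an ordinary proper coloring of its underlying graph, so $\chi(\Sigma_+)\ge\chi(G_+)$; spelling it out is worthwhile.
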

	\begin{proof}
		The signed graph $\Sigma_{+}$ is a subgraph with all positive edges. By Lemma \ref{Chromatic_lemmma} and Theorem \ref{encompassing} we get,\\
		\begin{equation*}
			\chi(\Sigma) \geq \chi(\Sigma_+)\geq 1+ \max_{m=1, 2, \cdots, n-1} \frac{\sum_{i=1}^{m}\lambda_{i}(\Sigma_+)}{-\sum_{i=1}^{m}\lambda_{n-i+1}(\Sigma_+)}.
		\end{equation*}
	\end{proof}
	
	The following result is a consequence of Theorem \ref{Sigma+_Bound}. In cases where the eigenvalues of $\Sigma_-$ are easy to compute, the bound becomes handy. 
	
	\begin{corollary}\label{Chroma_Treom}
		Let $\Sigma$ be a non-empty signed graph with $n$ vertices and at least one positive edge. Let $B=-A(\Sigma_-)$ and let $\lambda_{1}(\Sigma)\geq\lambda_{2}(\Sigma)\geq \cdots \geq \lambda_{n}(\Sigma) $ be the eigenvalues of $A(\Sigma)$ and $\lambda_{1}(B)\geq\lambda_{2}(B)\geq \cdots \geq \lambda_{n}(B) $ be the eigenvalues of $B.$ Then \begin{equation}\label{signed_hoff_gene}
			\chi(\Sigma)\geq 1+ \max_{m=1, 2, \cdots, n-1} \frac{\sum_{i=1}^{m}\lambda_{i}(\Sigma)+\sum_{i=1}^{m}\lambda_{n-i+1}(B)}{-\sum_{i=1}^{m}\lambda_{n-i+1}(\Sigma)-\sum_{i=1}^{m}\lambda_{n-i+1}(B)}.
		\end{equation} 
	\end{corollary}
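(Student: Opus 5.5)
The plan is to reduce the statement to Theorem~\ref{Sigma+_Bound} by bounding the partial sums of the spectrum of $\Sigma_+$ through the eigenvalues of $A(\Sigma)$ and $B$, via the majorization relations of Lemma~\ref{majorization_+}. Since $A(\Sigma)=A(\Sigma_+)-A(\Sigma_-)=A(\Sigma_+)+B$, we have $A(\Sigma_+)=A(\Sigma)+(-B)$. Fix $m\in\{1,\dots,n-1\}$ and write
\[
S_m^{\uparrow}(X)=\sum_{i=1}^{m}\lambda_i(X),\qquad S_m^{\downarrow}(X)=\sum_{i=1}^{m}\lambda_{n-i+1}(X).
\]
The aim is to show
\[
S_m^{\uparrow}(\Sigma_+)\ \ge\ S_m^{\uparrow}(\Sigma)+S_m^{\downarrow}(B)=:N_m,
\qquad
-S_m^{\downarrow}(\Sigma_+)\ \le\ -S_m^{\downarrow}(\Sigma)-S_m^{\downarrow}(B)=:D_m .
\]
If $D_m>0$ and $N_m\ge 0$, these give $S_m^{\uparrow}(\Sigma_+)/(-S_m^{\downarrow}(\Sigma_+))\ge N_m/D_m$. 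The claim then follows by taking the maximum over $m$ and applying Theorem~\ref{Sigma+_Bound}. When $N_m<0$ the $m$-th term is below $1$, and when $D_m\le 0$ it must be read with the same positivity convention as in Theorem~\ref{Sigma+_Bound}; I would dispose of these degenerate cases first. Throughout, $-S_m^{\downarrow}(\Sigma_+)>0$ because $\Sigma_+$ has an edge.

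For the estimates themselves, apply the left half of Lemma~\ref{majorization_+} with the pair $(A(\Sigma),-B)$, and, for the bottom sums, with the pair reversed. Ky Fan's principle then gives
\[
S_m^{\uparrow}(\Sigma_+)\ge S_m^{\uparrow}(\Sigma)-S_m^{\uparrow}(B),
\qquad
S_m^{\downarrow}(\Sigma_+)\ge S_m^{\downarrow}(\Sigma)-S_m^{\uparrow}(B).
\]
Comparing with the target, both inequalities reduce to the single spectral condition
\[
x_m:=S_m^{\uparrow}(B)+S_m^{\downarrow}(B)\ \le\ 0 .
\]
Indeed, $S_m^{\uparrow}(\Sigma)-S_m^{\uparrow}(B)=N_m-x_m$, and the second bound gives $-S_m^{\downarrow}(\Sigma_+)\le D_m+x_m$. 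Because the ratio bound needs both a larger numerator and a smaller denominator, $x_m\le 0$ is exactly what is required.

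Since $B=-A(\Sigma_-)$, we have $x_m=-\bigl(S_m^{\uparrow}(\Sigma_-)+S_m^{\downarrow}(\Sigma_-)\bigr)$, where $\Sigma_-$ is now read as the unsigned graph on the negative edges. So I must show that, for the nonnegative matrix $A(\Sigma_-)$, the sum of its $m$ largest and $m$ smallest eigenvalues is nonnegative. For $m=1$ this is Perron--Frobenius: $\lambda_1(\Sigma_-)\ge|\lambda_n(\Sigma_-)|$. For larger $m$ I would try a Ky Fan argument, realizing the top-$m$ and bottom-$m$ sums as traces over orthonormal $m$-frames and pairing each frame vector with its entrywise absolute value, mirroring the Perron argument.

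This last step is the main obstacle, and I am not confident it holds in general. Already for $\Sigma_-=K_3$ and $m=2$ the eigenvalues $2,-1,-1$ give $(2-1)+(-1-1)=-1<0$. So I would first recheck the statement in such small cases to decide whether the maximum over $m$ absorbs the failure, for instance because the maximizing $m$ always satisfies $x_m\le0$, before committing to the general frame argument.
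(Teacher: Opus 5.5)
There is a genuine gap, and it comes from a sign error in your first line, not from the strategy. Your strategy is the same as the paper's: Ky Fan/majorization bounds on partial sums of the spectrum of $\Sigma_+$, then a ratio comparison and Theorem~\ref{Sigma+_Bound}.

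In the paper, $\Sigma_-$ is the signed subgraph carrying the negative edges. So $A(\Sigma_-)$ has entries $-1$, and $A(\Sigma)=A(\Sigma_+)+A(\Sigma_-)$. Hence $B=-A(\Sigma_-)$ is the nonnegative adjacency matrix of the underlying graph of $\Sigma_-$ (the paper also writes it as $A(-\Sigma_-)$), and $A(\Sigma_+)=A(\Sigma)+B$, not $A(\Sigma)-B$.

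By reading $A(\Sigma_-)$ as the unsigned matrix, you replaced $B$ by $-B$. That is exactly what produces your side condition $x_m\le 0$: that the $m$ largest plus the $m$ smallest eigenvalues of a nonnegative adjacency matrix have nonnegative sum. Your own $K_3$, $m=2$ computation shows this is false. So no frame/absolute-value argument can establish it, and hoping the maximizing $m$ avoids the bad cases is not a proof. As written, the proposal does not prove the statement.

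With the correct identity the difficulty disappears. Apply the two halves of Lemma~\ref{majorization_+} to the pair $(A(\Sigma),B)$:
\[
S_m^{\uparrow}(\Sigma_+)\ \ge\ S_m^{\uparrow}(\Sigma)+S_m^{\downarrow}(B)=N_m,
\qquad
S_m^{\downarrow}(\Sigma_+)\ \ge\ S_m^{\downarrow}(\Sigma)+S_m^{\downarrow}(B).
\]
The second inequality says $0<-S_m^{\downarrow}(\Sigma_+)\le D_m$. So both of your target inequalities hold with no condition on the spectrum of $B$, and $D_m>0$ is automatic, which removes your degenerate case $D_m\le 0$.

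It remains only to split on the sign of $N_m$:
\begin{itemize}
\item If $N_m\ge 0$, then $S_m^{\uparrow}(\Sigma_+)\ge N_m\ge 0$ and $0<-S_m^{\downarrow}(\Sigma_+)\le D_m$ give $N_m/D_m\le S_m^{\uparrow}(\Sigma_+)/\bigl(-S_m^{\downarrow}(\Sigma_+)\bigr)$.
\item If $N_m<0$, the $m$-th term is negative while the $\Sigma_+$ ratio is positive.
\end{itemize}
Taking the maximum over $m$ and invoking Theorem~\ref{Sigma+_Bound} finishes the proof, exactly as in the paper's argument with $P_m$ and $Q_m$.
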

	\begin{proof}
		Let $A= A(\Sigma)$, $B=-A(\Sigma_-)$, $C=A(\Sigma_+)$. Then clearly $C=A+B$.

		For a symmetric matrix $X$ of order $n$ define:
		\[
		P_m(X)=\sum_{i=1}^{m}\lambda_i(X),
		\qquad
		Q_m(X)=
		-\sum_{i=1}^{m}\lambda_{n-i+1}(X),
		\]
		where $1 \leq m < n.$ If $C\neq 0$, both $P_m(C)$ and $Q_m(C)$ are positive. The quotient in
		(\ref{signed_hoff_gene}) is
		\[
		R_m=
		\frac{P_m(A)-Q_m(B)}
		{Q_m(A)+Q_m(B)}.
		\]
		
		By Lemma \ref{majorization_+}, we get
		\begin{equation}\label{P,Q_Ineq}
			P_m(C)\geq P_m(A)-Q_m(B),
			\qquad
			Q_m(C)\leq Q_m(A)+Q_m(B).
		\end{equation}
		
		If $P_m(A)-Q_m(B)$ is nonnegative, then (\ref{P,Q_Ineq}) implies
		\[
		R_m\leq
		\frac{P_m(C)}
		{Q_m(C)}.
		\]
		
		If $P_m(A)-Q_m(B)$ is negative, the same conclusion is immediate because
		the quotient on the right is positive. Therefore
		\[
		1+\max_m R_m
		\leq
		1+\max_m
		\frac{P_m(C)}
		{Q_m(C)}.
		\]
		By Theorem \ref{Sigma+_Bound},
		$$\chi(\Sigma) \geq \chi(\Sigma_+) \geq 1+\max_m
		\frac{P_m(C)}
		{Q_m(C)} \geq 1+ \max_m R_m
		.
		$$
		
	\end{proof}

	\begin{remark}\label{Chrom_Num_Gen}
		The signed graph $\Sigma$ in Theorem \ref{Sigma+_Bound} and Corollary \ref{Chroma_Treom} can be replaced by any signed graph that is switching equivalent to it. Hence for a signed graph if $T(\Sigma)= \{ \Sigma^{'} \sim \Sigma: \Sigma^{'} \text{has at least one positive edge} \}$
		$$\chi(\Sigma)\geq  1+ \max_{\Sigma^\prime \in T(\Sigma)}~\max_{m=1, 2, \cdots, n-1} \frac{\sum_{i=1}^{m}\lambda_{i}(\Sigma^{'}_+)}{-\sum_{i=1}^{m}\lambda_{n-i+1}(\Sigma^\prime_+)}$$
		and $$\chi(\Sigma)\geq  1+ \max_{\Sigma^\prime \in T(\Sigma)}~\max_{m=1, 2, \cdots, n-1} \frac{\sum_{i=1}^{m}\lambda_{i}(\Sigma)+\sum_{i=1}^{m}\lambda_{n-i+1}(-\Sigma^\prime_-)}{-\sum_{i=1}^{m}\lambda_{n-i+1}(\Sigma)-\sum_{i=1}^{m}\lambda_{n-i+1}(-\Sigma^\prime_-)}.$$
	\end{remark}

	We construct a few examples of signed graphs for which the bound in Corollary \ref{Chroma_Treom} is better than the bound in Theorem \ref{Hoffman_signed}. 
	\begin{example}
		Take two copies of $K_{15}$ with all positive edges. Let $u_1$ be a fixed vertex in one of the copies and $v_1, v_2$ be two fixed vertices in another copy. Let $\Sigma$ be the graph obtained from these two copies of $K_{15}$ by joining $u_1$ and $v_1$ by a positive edge and $u_1$ and $v_2$ by a negative edge. Then in the notation of Corollary \ref{Chroma_Treom}, $\Sigma_- \cong (K_2,-) \cup 28K_1$. Hence $\spec(B) =\spec(-A(\Sigma_-))=\{1^{(1)},0^{(28)},-1^{(1)}\}$. Also, it can be shown that $-1$ is an eigenvalue of $\Sigma$ with multiplicity $26$, and the remaining eigenvalues are the roots of the equation $(x-14)(x^{3}-12x^{2}-29x+12)=0$. Also one can check that $14.008< \lambda_1(\Sigma) < 14.00896$, $\lambda_2(\Sigma)=14$, $0.36138< \lambda_3(\Sigma) < 0.37$, $-2.37034< \lambda_{30}(\Sigma) < -2.37033$. This gives that, $\dfrac{\lambda_{1}}{-\lambda_{30}}< \dfrac{14.00896}{2.37033}<6$. Hence, Wang et al.'s bound cannot be above $7$. On the other hand, $\dfrac{\lambda_{1}(\Sigma)+\lambda_{2}(\Sigma)+\lambda_{30}(B)+\lambda_{29}(B)}{-\lambda_{30}(\Sigma)-\lambda_{29}(\Sigma)-\lambda_{30}(B)-\lambda_{29}(B)}=\dfrac{\lambda_{1}(\Sigma)+13}{-\lambda_{30}(\Sigma)+2}> \dfrac{14.008+13}{2.37034+2}>6$. Hence, the bound in Corollary \ref{Chroma_Treom} gives a bound which is above $7$.
	\end{example}
	\begin{example}\label{circulant_exam}
		A Circulant graph $C_{n}\{a_{1},a_{2},\dots,a_{k}\}$ is a graph with $V(G)=\mathbb{Z}_{n}$. For $v_{i}, v_{j} \in V(G)$, $v_{i} \sim v_{j}$ if and only if $ v_{i} - v_{j} \in \{a_{1},a_{2},\dots, a_{k},-a_1,-a_{2},\dots, -a_{k}\} ( \mod n )$. Consider the Circulant graph $G\cong C_{10}\{2,3,5\}$. Define $\sigma: E(G) \rightarrow\{1,-1\}$ as\\
		\[\sigma(e_{ij})=\begin{cases} 
			1  & \mbox{if}~ v_i-v_j\in \{2,-2,3,-3\} (\mod 10) \\
			-1  & \mbox{if}~ v_i-v_j\in \{5,-5\} (\mod 10) \\  
		\end{cases}\]
		Let $\Sigma=(G, \sigma)$ in Figure \ref{fig:circulant}. Then $\spec(\Sigma) =\{3^{(1)},1^{(5)},(\sqrt{5}-2)^{(2)},(-\sqrt{5}-2)^{(2)}\}$. Wang et al.'s bound gives $\chi(\Sigma) \geq 1.708$, whereas for $m=8$, Corollary \ref{Chroma_Treom} gives $\chi(\Sigma) \geq 2.07$. The chromatic number $\chi(\Sigma)$ actually equals $4$. 
	\end{example}
	\begin{figure}
		\centering
		\includegraphics[width=0.5\linewidth]{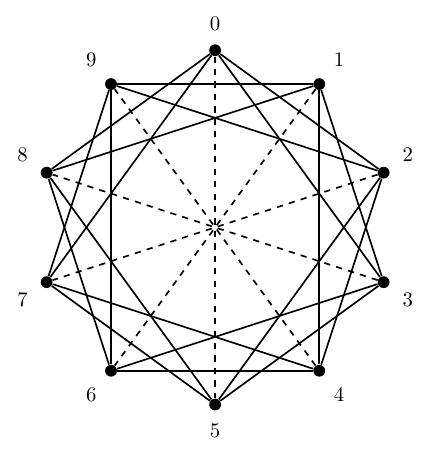}
		\caption{$\Sigma=(C_{10}\{2,3,5\}, \sigma)$, $\sigma$ as defined in Example \ref{circulant_exam}.} The dashed lines are negative edges.
		\label{fig:circulant}
	\end{figure}
	A natural question to ask is whether a result analogous to Theorem \ref{encompassing} can be proved under certain restrictions. Since the result is true in case of unsigned graphs, an obvious guess is to restrict to signed graphs with $\lambda_1 \geq \vert \lambda_n \vert$. However, the following example rules this out.
	
	\begin{example}
		Consider
		\[
		\Sigma=C_4^+\mathbin{\dot\cup}K_3^-,
		\]
		where $C_4^+$ is an all-positive $4$-cycle and $K_3^-$ is an all-negative triangle.  Both components have signed chromatic number $2$, so $\chi(\Sigma)=2$.  Their spectra are
		\[
		\spec(C_4^+)=\{2,0,0,-2\},
		\qquad
		\spec(K_3^-)=\{1,1,-2\}.
		\]
		Thus
		\[
		\spec(\Sigma)=\{2,1,1,0,0,-2,-2\},
		\]
		and the following holds 
		\[
		\lambda_1(\Sigma)=2=-\lambda_7(\Sigma).
		\]
		For $m=5$,
		\[
		\sum_{i=1}^5\lambda_i=4,
		\qquad
		-\sum_{i=1}^5\lambda_{7-i+1}=3.
		\]
		The expected lower bound is therefore
		\[
		\chi(\Sigma)\geq1+\frac43=\frac73,
		\]
		contradicting $\chi(\Sigma)=2$.
	\end{example}
	
	The disconnected example above might suggest adding connectedness.  That repair also fails, even if the spectral hypothesis is strengthened to $\lambda_1>-\lambda_n$.
	
	Construct a connected signed graph $\Omega$ as follows.
	
	\begin{enumerate}
		\item Start with an all-positive complete tripartite graph $K_{1,4,4}$.  Denote its singleton part by $\{x\}$ and its two $4$-vertex parts by $Y$ and $Z$.
		\item Take three disjoint all-negative copies $H_1,H_2,H_3$ of $K_6$.
		\item In each $H_i$, choose a vertex $h_i$, and add the positive bridge $xh_i$.
	\end{enumerate}
	
	The graph is connected.  It has a positive triangle, so $\chi(\Omega)\geq3$.  It also has a signed $3$-coloring with color set $\{-1,0,1\}$: assign
	\[
	c(x)=0,
	\qquad c(y)=1\ (y\in Y),
	\qquad c(z)=-1\ (z\in Z),
	\]
	and assign color $1$ to every vertex in each $H_i$.  Positive edges in $K_{1,4,4}$ join distinct colors, each positive bridge joins $0$ to $1$, and every negative edge in an $H_i$ joins two vertices colored $1$, which is allowed because $1\neq-1$.  Hence
	\[
	\chi(\Omega)=3.
	\]
	
	For the spectrum, use the equitable partition
	\[
	\{x\},\quad Y,\quad Z,\quad \{h_1,h_2,h_3\},
	\quad \bigcup_{i=1}^3\bigl(V(H_i)\setminus\{h_i\}\bigr).
	\]
	Its quotient matrix is
	\[
	Q=
	\begin{pmatrix}
		0&4&4&3&0\\
		1&0&4&0&0\\
		1&4&0&0&0\\
		1&0&0&0&-5\\
		0&0&0&-1&-4
	\end{pmatrix},
	\]
	with characteristic polynomial
	\[
	\det(tI-Q)=(t+2)(t+4)(t^3-2t^2-28t+44).
	\]
	The remaining eigenvalues can be accounted for explicitly.  Differences within $Y$ and within $Z$ give $0$ with multiplicity $3+3=6$.  For each $i$, vectors supported on $V(H_i)\setminus\{h_i\}$ and having coordinate sum zero give the eigenvalue $1$ with multiplicity $4$, hence multiplicity $12$ in total.  Let $(c_1,c_2,c_3)$ range over the two-dimensional subspace $c_1+c_2+c_3=0$.  Assigning value $-5c_i$ at $h_i$ and value $c_i$ at each of the other five vertices of $H_i$ gives two further independent $1$-eigenvectors.  Assigning the constant value $c_i$ on all six vertices of $H_i$ gives two independent $(-5)$-eigenvectors.  Therefore the remaining eigenvalues are
	\[
	0^{(6)},\qquad 1^{(14)},\qquad (-5)^{(2)}.
	\]
	The dimensions $6+14+2+5=27$ show that the quotient roots and these difference-space eigenvalues exhaust the spectrum.  Thus
	\[
	\chi_{A(\Omega)}(t)=
	t^6(t-1)^{14}(t+5)^2(t+2)(t+4)
	(t^3-2t^2-28t+44).
	\]
	Let the roots of
	\[
	f(t)=t^3-2t^2-28t+44
	\]
	be $\varepsilon<\beta<\alpha$.  The sign changes
	\[
	f(-6)<0<f(-5),\qquad
	f(1)>0>f(2),\qquad
	f(5)<0<f(6)
	\]
	show that
	\[
	-6<\varepsilon<-5,
	\qquad 1<\beta<2,
	\qquad 5<\alpha<6.
	\]
	These are all three roots of the cubic.  Therefore
	\[
	\lambda_1(\Omega)=\alpha,
	\qquad
	\lambda_{27}(\Omega)=\varepsilon.
	\]
	Since the root sum is
	\[
	\alpha+\beta+\varepsilon=2,
	\]
	one has
	\[
	\alpha+\varepsilon=2-\beta>0,
	\]
	so
	\[
	\lambda_1(\Omega)>-\lambda_{27}(\Omega).
	\]
	
	Take $m=23=27-4$.  The four largest eigenvalues are
	\[
	\alpha,\ \beta,\ 1,\ 1,
	\]
	and the four smallest are
	\[
	\varepsilon,\ -5,\ -5,\ -4.
	\]
	Using trace zero,
	\[
	P_{23}=14-\varepsilon,
	\qquad
	Q_{23}=\alpha+\beta+2.
	\]
	Writing $S=\alpha+\beta=2-\varepsilon$, one obtains
	\[
	\frac{P_{23}}{Q_{23}}
	=\frac{S+12}{S+2}.
	\]
	Because $\varepsilon>-6$, one has $S<8$, and hence
	\[
	\frac{S+12}{S+2}>2.
	\]
	Therefore Wocjan-Elphick type bound, if true, would give
	\[
	\chi(\Omega)>
	1+2=3,
	\]
	contradicting $\chi(\Omega)=3$.
	
	Numerically,
	\[
	\alpha\approx5.59780056,
	\quad \beta\approx1.53220648,
	\quad \varepsilon\approx-5.13000704,
	\]
	and the expected right side is approximately $3.09528941$.
	
	Thus
	connectedness does not rescue the proposed bound.
	
	The example also shows that replacing $\lambda_1\geq-\lambda_n$ by the strict inequality $\lambda_1>-\lambda_n$ is insufficient.

	\section{Signed Adjacency Eigenvalues}\label{Signed_Adjacency}
	
	Let $w_{k}^{+}(i,j)$ denote the number of positive walks of length $k$ from $i$ to $j$ and $w_{k}^{+}(i)$ denote the number of positive walks of length $k$ originating from $i$, and denote $w_{k}^{-}(i,j)$ and $w_{k}^{-}(i)$ analogously for the negative walks. Note that the $i,j$-th entry of $A^k$ is given by $w_{k}^{+}(i,j)-w_{k}^{-}(i,j)$ and hence $s_k(i):=(A^k\mathbf{1})_i= w_{k}^{+}(i)-w_{k}^{-}(i)$. Further, for $i \in V(G)$, let $N^1_i= N^+_i= \{j \in V(G) : \sigma(ij)=1\}, \quad N^{-1}_i=N^{-}_i= \{j \in V(G) : \sigma(ij)=-1\}$ and $\vert N^+_i \vert = d^+_i, \quad \vert N^{-}_i \vert = d^-_i$. Also define for each $i \in V(G)$, \[
	n_i=\sum_{j\sim i}
	\left(
	\vert N_i^{\sigma(ij)}\cap N_j\vert
	-
	\left | \vert N_i^{\sigma(ij)}\cap N_j^{\sigma(ij)}\vert
	-
	\vert N_i^{\sigma(ij)}\cap N_j^{-\sigma(ij)}\vert \right |
	\right).
	\]We also define two more quantities as follows: For $i \in V(G)$ with $d_i > 0$, let \\
	$$ m_i^+= \dfrac{1}{d_i}\bigg(\sum \limits_{j \in N_i^+}d_j^+ + \sum \limits_{j \in N_i^-}d_j^-\bigg)$$
	$$ m_i^-= \dfrac{1}{d_i}\bigg(\sum \limits_{j \in N_i^+}d_j^- + \sum \limits_{j \in N_i^-}d_j^+\bigg)$$ Define $m_i=m_i^++m_i^-$. It is easy to see that $m_i$ is just the average 2-degree of vertex $i$, that is, $m_i= \dfrac{\sum \limits_{j \sim i} d(j)}{d(i)}.$ In \cite{Stanic2019bounding}, Stani{\'c} gave an upper bound for the square of the largest eigenvalue of a signed graph in terms of $n_i,d_i,m_i$. We note down a simple proof for a strengthened bound below. For this, we define $\vert\vert \hspace{0.5mm} .\hspace{0.5mm}  \vert \vert_{\infty} : M_n(\mathbb{C) \rightarrow \mathbb{R}} $ by $\vert\vert  A  \vert \vert_{\infty}= \max \limits_{1\leq i \leq n}\quad\sum \limits_{j=1}^{n}\vert a_{ij}\vert$. It is well known (see \cite{MR2978290} Example 5.6.5 and Theorem 5.6.9) that $\vert\vert \hspace{0.5mm} .\hspace{0.5mm}  \vert \vert_{\infty}$ is a matrix norm and $\rho(A) \leq \vert \vert A  \vert \vert_{\infty}$.
	\begin{lemma}\label{norm_bound}
		Let $\Sigma$ be a signed graph with largest eigenvalue $\lambda_1$. Let $k$ be a positive integer. Then 
		$$\lambda_1^k \leq \vert \vert A^k \vert \vert_{\infty}= \max \limits_{1\leq i \leq n}\quad\sum \limits_{j=1}^{n}\vert w_k^+(i,j)-w_k^-(i,j)\vert$$
	\end{lemma}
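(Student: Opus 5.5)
The plan is to combine two standard facts. The first is the spectral mapping property for powers of a symmetric matrix. The second is the bound $\rho(X)\le \|X\|_\infty$ recalled just before the statement.

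Let $A=A(\Sigma)$. Since $A$ is real symmetric, its eigenvalues are real, and the eigenvalues of $A^k$ are exactly $\lambda_1^k,\dots,\lambda_n^k$ (diagonalize $A=U\Lambda U^T$, so $A^k=U\Lambda^kU^T$). Hence $|\lambda_1|^k$ is the modulus of an eigenvalue of $A^k$, which gives $|\lambda_1|^k\le \rho(A^k)$. Applying $\rho(A^k)\le \|A^k\|_\infty$ from the cited result in \cite{MR2978290}, we get
\[
\lambda_1^k\le |\lambda_1|^k\le \rho(A^k)\le \|A^k\|_\infty .
\]
This covers every sign of $\lambda_1$ and every parity of $k$. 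When $\lambda_1<0$ and $k$ is odd, the left side is negative and the inequality is trivial, so no case split is really needed.

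The equality with the walk expression then follows from the observation at the start of this section: the $(i,j)$ entry of $A^k$ equals $w_k^+(i,j)-w_k^-(i,j)$. This is proved by induction on $k$. Expanding $(A^k)_{ij}=\sum_{\ell}(A^{k-1})_{i\ell}a_{\ell j}$ writes it as a sum over walks $i=u_0,u_1,\dots,u_k=j$ of the products $\prod_t \sigma(u_{t-1}u_t)$, and each such product is $+1$ for a positive walk and $-1$ for a negative walk. Substituting into the definition of $\|\cdot\|_\infty$ as the maximum absolute row sum gives the displayed formula.

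None of these steps is a real obstacle. The only point needing care is that $\lambda_1$ itself (not $\rho$) appears on the left, and that is handled by the chain of inequalities above, which passes through $|\lambda_1|^k$.
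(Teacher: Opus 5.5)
Your proof is correct and follows the paper's argument: the paper uses the same chain $\lambda_1^k\le\rho(A)^k=\rho(A^k)\le\|A^k\|_\infty$ and reads off the walk expression from the entries of $A^k$. Your remark about $\lambda_1<0$ is harmless but never needed, because $A$ has zero trace and so $\lambda_1\ge 0$ always.
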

	\begin{proof}
		$$\lambda_1^k \leq \rho(A)^k= \rho(A^k) \leq \vert \vert A^k \vert \vert_{\infty}=\max \limits_{1\leq i \leq n}\quad\sum \limits_{j=1}^{n}\vert w_k^+(i,j)-w_k^-(i,j)\vert$$
	\end{proof}
	Now define, \[
	p_{ij}
	=
	\left|N_i^{\sigma(ij)}\cap N_j^{\sigma(ij)}\right|,
	\qquad
	q_{ij}
	=
	\left|N_i^{\sigma(ij)}\cap N_j^{-\sigma(ij)}\right|.
	\]
	Note that $\left|N_i^{\sigma(ij)}\cap N_j\right|= p_{ij}+q_{ij}$. \\
	Stani\'{c}'s bound for the square of largest eigenvalue can be easily recovered from Lemma \ref{norm_bound}.
	\begin{theorem}
		Let $\Sigma$ be a signed graph with largest eigenvalue $\lambda_1$. 
		$$\lambda_1^2 \leq \max \{d_im_i-n_i : 1 \leq i \leq n \}$$
	\end{theorem}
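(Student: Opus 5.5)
Apply Lemma \ref{norm_bound} with $k=2$: this gives $\lambda_1^2\le \|A^2\|_\infty=\max_i\sum_{j}|(A^2)_{ij}|$. So it suffices to show, for every vertex $i$,
\[
\sum_{j=1}^n |(A^2)_{ij}| \le d_i m_i - n_i .
\]
We evaluate row $i$ of $A^2$ directly. The diagonal entry is $(A^2)_{ii}=d_i$. For $j\ne i$ we have $(A^2)_{ij}=\sum_{k\sim i,\,k\sim j}\sigma(ik)\sigma(kj)$, the signed count of common neighbours.

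Next, organise the off-diagonal sum by the middle vertex. Note that $d_im_i=\sum_{k\sim i}d_k$. This counts all walks $i\to k\to j$ of length two. Among them, $d_i$ return to $i$; the remaining ones go to some $j\neq i$. For $j\ne i$, the triangle inequality gives $|(A^2)_{ij}|\le$ the number of length-2 walks from $i$ to $j$. Summing over $j$ gives the crude bound $\sum_j|(A^2)_{ij}|\le d_i m_i$. We then refine it by splitting on whether $j$ is adjacent to $i$.

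\textbf{Case $j\not\sim i$.} Keep the trivial bound.

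\textbf{Case $j\sim i$.} Fix the edge $ij$ with sign $s=\sigma(ij)$. The common neighbours $k$ of $i$ and $j$ fall into four classes according to $(\sigma(ik),\sigma(kj))$. Regroup these relative to $s$, or after symmetrising in $i,j$ relative to $N_i^{s}$, so that the walk count through $j$'s common neighbourhood becomes $|N_i^{s}\cap N_j|=p_{ij}+q_{ij}$. The signed contribution becomes $p_{ij}-q_{ij}$. The saving from replacing a count by the absolute value of a signed count is then
\[
\bigl(p_{ij}+q_{ij}\bigr)-\bigl|p_{ij}-q_{ij}\bigr|.
\]
Summing over $j\sim i$ gives exactly $n_i$.

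The main obstacle is this bookkeeping. The quantity $n_i$ is phrased with $N_i^{\sigma(ij)}$ rather than with the full common neighbourhood $N_i\cap N_j$. So we must show that the savings $|N_i\cap N_j|-|(A^2)_{ij}|$, summed over neighbours $j$, dominate $\sum_{j\sim i}\bigl(p_{ij}+q_{ij}-|p_{ij}-q_{ij}|\bigr)$. Two ingredients should handle this. First, each common neighbour $k$ of $i,j$ appears in both the $j$-term and the $k$-term, since each triangle $ijk$ is seen twice. Second, $\sigma(ik)=\sigma(ij)$ exactly when $k\in N_i^{\sigma(ij)}$. We will attempt a pairing argument showing that
\[
\sum_{j\sim i}|(A^2)_{ij}| \le \sum_{j\sim i}\Bigl(|N_i\cap N_j|-\bigl(p_{ij}+q_{ij}-|p_{ij}-q_{ij}|\bigr)\Bigr),
\]
using $|x+y|\le |x|+|y|$ applied to the split of $(A^2)_{ij}$ into the parts with $k\in N_i^{s}$ and $k\in N_i^{-s}$. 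If the exact form of $n_i$ resists this, we fall back on treating the two parts separately and checking that the second part's contribution is absorbed by the symmetric triangle counting.

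Adding the three contributions (diagonal, adjacent $j$, non-adjacent $j$) then yields $\sum_j|(A^2)_{ij}|\le d_im_i-n_i$ for each $i$. Taking the maximum over $i$ completes the proof.
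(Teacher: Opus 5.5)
Your proposal follows the paper's route: apply Lemma \ref{norm_bound} with $k=2$, then bound each row sum of $|A^2|$ by the total number $d_im_i$ of $2$-walks from $i$, minus the cancellation at the neighbours $j\sim i$. It is correct. The step you flag as the main obstacle closes term by term with no pairing across triangles or symmetrisation. For $j\sim i$ with $s=\sigma(ij)$, the part of $(A^2)_{ij}$ over $k\in N_i^{s}$ is exactly $p_{ij}-q_{ij}$, and the part over $k\in N_i^{-s}\cap N_j$ has modulus at most $|N_i^{-s}\cap N_j|$. Your triangle inequality then gives $|N_i\cap N_j|-|(A^2)_{ij}|\ge p_{ij}+q_{ij}-|p_{ij}-q_{ij}|$; this is a lower bound on the saving, not an equality as you wrote. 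It is the paper's observation $\min\{w_2^+(i,j),w_2^-(i,j)\}\ge\min\{p_{ij},q_{ij}\}$ in different words.
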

	\begin{proof}
		Since $n_i= \sum \limits_{j \sim i}\left(p_{ij}+q_{ij}-\vert p_{ij}-q_{ij} \vert\right) $ and $a+b -\vert a - b \vert=2\min\{a,b\}$, we must have that, $n_i= 2\sum \limits_{j \sim i}\min\{p_{ij},q_{ij}\}$.
		Also,
		\begin{align*}
			\|A^2\|_{\infty}
			&=\max_{1\le i\le n}\left(\sum_{j=1}^{n}
			\left|w_2^+(i,j)-w_2^-(i,j)\right|\right)\\
			&=\max_{1\le i\le n}\left(\sum_{j=1}^{n}
			\left(
			w_2^+(i,j)+w_2^-(i,j)\right)
			-2 \sum_{j=1}^{n}\min\{w_2^+(i,j),\,w_2^-(i,j)\}\right)
			\\
		\end{align*}
		The quantities $p_{ij} \text{ and }q_{ij}$ count specific types of positive and negative $2$ walks from $i$ to $j$ respectively. Therefore $p_{ij} \leq w_2^+(i,j)$ and $q_{ij} \leq w_2^-(i,j)$. Therefore,\\
		\begin{align*}
			\|A^2\|_{\infty}
			&\leq \max_{1\le i\le n}\left(\sum_{j=1}^{n}
			\left(
			w_2^+(i,j)+w_2^-(i,j)\right)
			-2 \sum_{j \sim i}\min\{p_{ij},q_{ij}\}\right)
			\\
			&=\max_{1\le i\le n} (d_im_i-n_i)  
		\end{align*}
		The result now follows from Lemma \ref{norm_bound} with $k=2$.
	\end{proof}
	By a non-negative (positive) vector we mean the entries of that vector are non-negative (positive). In the same paper \cite{Stanic2019bounding}, Stani{\'c} gave an upper bound for the largest eigenvalue of a signed graph having a non-negative eigenvector associated with $\lambda_1$, involving the number of positive walks and negative walks. Stani{\'c} proved the following bound in terms of the absolute value of difference of positive and negative $k$ walks originating from a vertex called the net $k$ walks of that vertex. 
	
	\begin{theorem}\emph{(\cite{Stanic2019bounding}, Theorem 3.4)}\label{Stanic_result}
		If a non-negative eigenvector is associated with the largest eigenvalue $\lambda_{1}(\Sigma)$ of a signed graph $\Sigma$, then for every positive integer $k$,
		\[\lambda_{1}^{k}(\Sigma)\leq \max\{|w_{k}^{+}(i)-w_{k}^{-}(i)| : 1\leq i \leq n\}=\max\{\vert s_k(i) \vert : 1 \leq i \leq n \}.\]
	\end{theorem}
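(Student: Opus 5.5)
Let $\mathbf{x}\ge 0$ be a nonnegative eigenvector for $\lambda_1=\lambda_1(\Sigma)$, so that $A^k\mathbf{x}=\lambda_1^k\mathbf{x}$ for every positive integer $k$, and normalize it so that $\sum_i x_i=1$; this is possible because $\mathbf{x}\neq 0$. The plan is to pair this eigenvector with the all-ones vector $\mathbf{1}$, using the symmetry of $A^k$ to turn the eigenvalue equation into a statement about the row sums $s_k(i)=(A^k\mathbf{1})_i$.

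\emph{First step: a convex-combination identity.} Since $A$ is symmetric,
\[
\lambda_1^k=\lambda_1^k\,\mathbf{1}^T\mathbf{x}=\mathbf{1}^TA^k\mathbf{x}=(A^k\mathbf{1})^T\mathbf{x}=\sum_{i=1}^n s_k(i)\,x_i .
\]
Because the $x_i$ are nonnegative and sum to $1$, the right-hand side is a convex combination of the numbers $s_k(i)$.

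\emph{Second step: bound the combination.} It follows that
\[
\lambda_1^k\le \max_i s_k(i)\le \max_i |s_k(i)|=\max_i|w_k^+(i)-w_k^-(i)|,
\]
using the identity $s_k(i)=w_k^+(i)-w_k^-(i)$ noted earlier in the paper. This is exactly the statement. When $\lambda_1^k<0$ (possible for odd $k$), the inequality is trivial because the right-hand side is nonnegative. No absolute values on $\lambda_1$ are required.

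\emph{Main obstacle.} The only delicate point is the normalization: it needs $\sum_i x_i>0$, which holds for any nonzero nonnegative vector. I would also note that nonnegativity of $\mathbf{x}$ is essential, since it is what makes the sum a convex combination; this is precisely where the hypothesis on the eigenvector enters. The argument in fact yields the sharper bound $\lambda_1^k\le \max_i s_k(i)$, which could then serve as the improvement mentioned in the introduction.
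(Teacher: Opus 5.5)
Your proof is correct and is essentially the paper's argument: the paper proves Theorem~\ref{weighted_bound} by equating $y^TA^kx$ with $x^TA^ky$ for a positive test vector $y$. Taking $y=\mathbf{1}$ there gives exactly your convex-combination identity and yields Theorem~\ref{improve_Stanic_result}, from which the stated bound follows. Two small refinements: your weights $x_i$ vanish off $S=\mathrm{supp}(x)$, so your argument already gives the paper's sharper bound $\lambda_1^k\le\max_{i\in S}s_k(i)$; and the case $\lambda_1^k<0$ you set aside never occurs, since $\operatorname{tr}A=0$ forces $\lambda_1\ge 0$.
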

	
	We establish a general weighted bound on the net walks which can be used to strengthen Stani{\'c}'s absolute value bound.\\
	Define  for an arbitrary positive vector $y$, $R^{k}_i(y)= \dfrac{(A^ky)_i}{y_i}=\dfrac{\sum \limits_{j=1}^{n}(w_k^+(i,j)-w_k^-(i,j))y_j}{y_i}$
	
	\begin{theorem}\label{weighted_bound}
		Let $\lambda_{1}(\Sigma)$ be the largest eigenvalue of a signed graph $\Sigma$ with a non-negative eigenvector $x$ associated with it. Let $S=supp(x)=\{u \in V(G) : x_u >0\}$. Then for $k \geq 1$ and an arbitrary positive vector $y$,
		\[\min\{R^{k}_i(y) : i \in S\}\leq \lambda_{1}^{k}(\Sigma)\leq \max\{R^{k}_i(y) : i \in S\}.\] 
	\end{theorem}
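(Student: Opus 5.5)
The plan is a weighted-average (Collatz--Wielandt type) argument that uses the symmetry of $A=A(\Sigma)$ to move $A^k$ from $y$ onto the eigenvector $x$. No positivity of $A$ is needed. The only positivity used is that of the weights $x_iy_i$ for $i\in S$.

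First, since $Ax=\lambda_1 x$ with $\lambda_1=\lambda_1(\Sigma)$, iterating gives $A^kx=\lambda_1^k x$ for every $k\ge 1$. Next, compute the scalar $x^TA^ky$ in two ways. Because $A$ is symmetric, so is $A^k$, and hence
\[
x^TA^ky=(A^kx)^Ty=\lambda_1^k\,x^Ty=\lambda_1^k\sum_{i\in S}x_iy_i ,
\]
where only indices in $S=\operatorname{supp}(x)$ contribute, since $x_i=0$ off $S$. On the other hand, expanding coordinatewise and using the definition of $R_i^k(y)$ (legitimate since every $y_i>0$) gives
\[
x^TA^ky=\sum_{i=1}^n x_i\,(A^ky)_i=\sum_{i\in S}x_iy_i\,R^k_i(y).
\]

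Equating the two expressions yields
\[
\lambda_1^k=\frac{\sum_{i\in S}x_iy_i\,R^k_i(y)}{\sum_{i\in S}x_iy_i}.
\]
This expresses $\lambda_1^k$ as a convex combination of the numbers $R_i^k(y)$, $i\in S$, with strictly positive weights $x_iy_i/\sum_{j\in S}x_jy_j$. The denominator is positive because $x\neq 0$ is non-negative, so $S\neq\emptyset$ and $x_iy_i>0$ on $S$. A convex combination lies between the minimum and the maximum of the averaged quantities, which gives both inequalities at once.

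There is essentially no obstacle. The only points needing care are the following.
\begin{itemize}
\item $S$ is nonempty, which holds because $x$ is a nonzero non-negative vector.
\item The restriction to $S$ is genuinely needed. Indices outside $S$ carry zero weight, so their $R_i^k(y)$ values do not constrain $\lambda_1^k$.
\item The argument does not require $\lambda_1^k\ge 0$. For odd $k$ the identity above holds verbatim with possibly negative $R_i^k(y)$.
\end{itemize}
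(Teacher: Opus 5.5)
Your proposal is correct and follows essentially the same route as the paper. Both compute $x^TA^ky$ in two ways using the symmetry of $A^k$, obtaining $\lambda_1^k\sum_i x_iy_i=\sum_i x_iy_iR_i^k(y)$, and then bound this positively weighted average over $S$ between the minimum and maximum of the $R_i^k(y)$, $i\in S$.
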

	\begin{proof}
		Let $x=({x_{1},x_{2}, \cdots, x_{n}})$ be a non-negative eigenvector corresponding to $\lambda_{1}(\Sigma)$. Let $y=(y_{1}, y_{2}, \cdots, y_{n})$ be an arbitrary positive vector. Define $z=(R^{k}_1(y),R^{k}_2(y),\cdots, R^{k}_n(y))$.
		
		It is easy to see that  $$ y^{T}A^{k}(\Sigma)x =\lambda_{1}^{k}(\Sigma)\sum_{i=1}^{n}x_{i}y_{i}.$$ 
		Note that $$x^{T}A^{k}(\Sigma)y=\sum_{i=1}^{n}x_{i}\Big(\sum_{j=1}^{n}(w_{k}^{+}(i,j)-w_{k}^{-}(i,j))y_{j}\Big)=\sum_{i=1}^{n}x_{i}y_{i}z_{i}.$$

		Since~ $y^{T}A^{k}(\Sigma)x=x^{T}A^{k}(\Sigma)y$, we get
		$$\lambda_{1}^{k}(\Sigma)\sum_{i=1}^{n}x_{i}y_{i}=\sum_{i=1}^{n}x_{i}y_{i}z_{i}.$$
		
		Let $m=\min_{i \in S}\{z_{i}\}$ and $ M=\max_{i \in S}\{z_{i}\}.$ Then, $$m\sum_{i=1}^{n}x_{i}y_{i} \leq \lambda_{1}^{k}(\Sigma)\sum_{i=1}^{n}x_{i}y_{i}\leq M\sum_{i=1}^{n}x_{i}y_{i}.$$ Thus, $$m\leq \lambda_{1}^{k}(\Sigma)\leq M.$$
		
	\end{proof}
	
	When the eigenvector corresponding to the largest eigenvalue is positive, Theorem \ref{weighted_bound} can be optimized exactly as follows:\\
	\begin{corollary}
		If $x $ is an eigenvector corresponding to $\lambda_1$ such that all the entries of $x$ are positive and $y$ is an arbitrary positive vector, then for any positive integer $k$, $$\lambda_1^k= \min \limits_{y > 0} \max \limits_{1 \leq i \leq n} R_i^{k}(y)= \max \limits_{y > 0} \min \limits_{1 \leq i \leq n} R_i^{k}(y)$$
	\end{corollary}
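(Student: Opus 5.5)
The plan is to derive both equalities from Theorem~\ref{weighted_bound} applied with $S=\{1,\dots,n\}$, which is allowed because $x$ is positive. For every positive vector $y$ that theorem gives
\[
\min_{1\le i\le n} R_i^k(y)\;\le\;\lambda_1^k\;\le\;\max_{1\le i\le n} R_i^k(y).
\]
Taking the infimum over $y>0$ on the right yields $\lambda_1^k\le \inf_{y>0}\max_i R_i^k(y)$. Taking the supremum over $y>0$ on the left yields $\sup_{y>0}\min_i R_i^k(y)\le \lambda_1^k$.

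It then suffices to show that both extrema are attained at one particular $y$, with common value $\lambda_1^k$. The natural choice is $y=x$ itself, which is admissible since $x>0$. Because $A^k x=\lambda_1^k x$, we get for every $i$
\[
R_i^k(x)=\frac{(A^kx)_i}{x_i}=\frac{\lambda_1^k x_i}{x_i}=\lambda_1^k .
\]
Hence $\max_i R_i^k(x)=\min_i R_i^k(x)=\lambda_1^k$.

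Combining the two steps, we have $\inf_{y>0}\max_i R_i^k(y)\le \lambda_1^k$, and together with the reverse inequality the infimum equals $\lambda_1^k$ and is attained at $y=x$; so it is a genuine minimum. Symmetrically, $\sup_{y>0}\min_i R_i^k(y)\ge\lambda_1^k$, so the supremum equals $\lambda_1^k$, attains its value at $x$, and is a genuine maximum. This establishes $\lambda_1^k=\min_{y>0}\max_i R_i^k(y)=\max_{y>0}\min_i R_i^k(y)$.

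No step here is a real obstacle. The only point needing care is that the positivity of $x$ does double duty: it makes $S$ the whole vertex set, so the min and max in Theorem~\ref{weighted_bound} run over all $i$, and it makes $x$ itself an admissible choice of $y$, which is what turns the infimum and supremum into an attained minimum and maximum.
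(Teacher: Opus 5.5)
Your proposal is correct and follows the same route as the paper: apply Theorem~\ref{weighted_bound} with $S=\{1,\dots,n\}$, which gives $\min_i R_i^k(y)\le\lambda_1^k\le\max_i R_i^k(y)$ for every $y>0$. Then take $y=x$, so that $R_i^k(x)=\lambda_1^k$ for all $i$ and both extrema are attained. Your write-up is more explicit than the paper's two-line proof about why the infimum and supremum are genuine minimum and maximum, but the argument is the same.
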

	\begin{proof}
		Let $y$ be an arbitrary positive vector. Then $\lambda_1^k$ lies between the maximum and minimum of  $\{R^{k}_i(y) : i \in [n]\}$. Further for $y=x$ these extrema are attained.
	\end{proof}
	We get a strengthening of Theorem \ref{Stanic_result} as a simple consequence of Theorem \ref{weighted_bound}.
	\begin{theorem}\label{improve_Stanic_result}
		Let $\lambda_{1}(\Sigma)$ be the largest eigenvalue of a signed graph $\Sigma$ with a non-negative eigenvector $x$ associated with it. Let $S=supp(x) = \{ u \in V(G): x_u > 0\}$. Then for $k \geq 1$
		\[\lambda_{1}^{k}(\Sigma)\leq \max\{s_k(i) : i \in S\}.\]
	\end{theorem}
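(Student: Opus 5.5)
The plan is to apply Theorem \ref{weighted_bound} with the specific positive vector $y=\mathbf{1}$, the all-ones vector. With this choice, for each vertex $i$,
\[
R_i^k(\mathbf{1})=\frac{(A^k\mathbf{1})_i}{1}=\sum_{j=1}^{n}\bigl(w_k^+(i,j)-w_k^-(i,j)\bigr)=w_k^+(i)-w_k^-(i)=s_k(i),
\]
using the identity $s_k(i)=(A^k\mathbf{1})_i$ recorded at the start of Section \ref{Signed_Adjacency}. Since $\mathbf{1}$ is positive, the hypotheses of Theorem \ref{weighted_bound} hold for the non-negative eigenvector $x$ associated with $\lambda_1(\Sigma)$. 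The upper inequality of that theorem then reads directly
\[
\lambda_1^k(\Sigma)\le \max\{R_i^k(\mathbf{1}) : i\in S\}=\max\{s_k(i): i\in S\}.
\]

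The key steps, in order, are as follows. First, note that $x\neq 0$, so $S$ is nonempty and the maximum is over a nonempty set; this also guarantees $\sum_i x_i y_i>0$, which Theorem \ref{weighted_bound} uses when it divides. Second, substitute $y=\mathbf{1}$ and identify $R_i^k(\mathbf{1})$ with $s_k(i)$ as above. Third, read off the upper bound.

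Finally, I would remark why this strengthens Theorem \ref{Stanic_result}. Clearly $\max\{s_k(i): i\in S\}\le \max\{|s_k(i)| : 1\le i\le n\}$, because we drop the absolute values and restrict to the subset $S$. So the new bound is never worse than Stani\'{c}'s.

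There is no real obstacle. The only point needing care is that the maximum is taken over $S$ rather than over all vertices, and that no absolute value is needed. Both come for free from Theorem \ref{weighted_bound}, since the averaging identity $\lambda_1^k\sum x_i y_i=\sum x_i y_i z_i$ only involves indices with $x_i>0$.
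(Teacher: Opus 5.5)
Your proposal is correct and is the paper's own argument: take $y=\mathbf{1}$ in Theorem \ref{weighted_bound}, note that $R_i^k(\mathbf{1})=(A^k\mathbf{1})_i=s_k(i)$, and read off the upper inequality over $S$. Your remarks that $S\neq\emptyset$ and that the bound is never weaker than Theorem \ref{Stanic_result} are also correct.
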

	\begin{proof}
		Choose $y=(1,1, \cdots, 1)$, then \[R^{k}_i(y)=\frac{1}{y_{i}}\sum_{j=1}^{n}(w_{k}^{+}(i,j)-w_{k}^{-}(i,j))y_{j}=\sum_{j=1}^{n}(w_{k}^{+}(i,j)-w_{k}^{-}(i,j))=w_{k}^{+}(i)-w_{k}^{-}(i).\]
		Therefore, by Theorem \ref{weighted_bound}, we get $\lambda_{1}^{k}(\Sigma)\leq \max\{w_{k}^{+}(i)-w_{k}^{-}(i) : i \in S\}=\max\{s_k(i) : i \in S\}.$ 
	\end{proof}
	
	The \emph{net degree} of a vertex, denoted by $d_{i}^{\pm}$ is defined as the difference of positive and negative neighbors of $i$ that is, $d_{i}^{\pm}=d_{i}^{+}-d_{i}^{-}$. 
	From the above result, we get an upper bound for the largest eigenvalue and its square as follows.
	\begin{corollary}
		If a non-negative eigenvector $x$ is associated with the largest eigenvalue $\lambda_{1}(\Sigma)$ of a signed graph  $\Sigma$ with support $S$, then 
		\[\lambda_{1}(\Sigma)\leq \max\{d_{i}^{+}-d_{i}^{-} : i \in S\}.\]
		\[\lambda_{1}^2(\Sigma)\leq \max\{d_{i}(m_i^+-m_i^-) : i \in S \quad \text{and } i \text{ is non-isolated}\}.\]
	\end{corollary}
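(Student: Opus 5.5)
The plan is to apply Theorem~\ref{improve_Stanic_result} with $k=1$ and $k=2$, and to identify $s_1(i)$ and $s_2(i)$ combinatorially.

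For $k=1$, the quantity $s_1(i)=(A\mathbf{1})_i=\sum_j a^{\sigma}_{ij}$ is the number of positive edges at $i$ minus the number of negative edges at $i$. Hence $s_1(i)=d_i^+-d_i^-$, and the first inequality follows directly from $\lambda_1(\Sigma)\le\max\{s_1(i): i\in S\}$.

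For $k=2$, I compute
\[
s_2(i)=(A^2\mathbf{1})_i=\sum_{j} a^{\sigma}_{ij}(A\mathbf{1})_j=\sum_{j\sim i}\sigma(ij)\,(d_j^+-d_j^-).
\]
I then split the sum over $j\in N_i^+$ and $j\in N_i^-$. This gives
\[
s_2(i)=\sum_{j\in N_i^+}d_j^+ +\sum_{j\in N_i^-}d_j^- -\Bigl(\sum_{j\in N_i^+}d_j^- +\sum_{j\in N_i^-}d_j^+\Bigr)=d_i(m_i^+-m_i^-).
\]
This identity requires $d_i>0$ so that $m_i^\pm$ are defined. Theorem~\ref{improve_Stanic_result} with $k=2$ then gives $\lambda_1^2\le\max\{s_2(i): i\in S\}$.

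The only subtle point is isolated vertices in $S$. If $i$ is isolated, then $s_2(i)=0$. I would handle this as follows. Isolated vertices in $S$ would force $\lambda_1 x_i=(Ax)_i=0$, so $\lambda_1=0$ whenever such a vertex lies in the support. If all of $S$ consists of isolated vertices, then $\lambda_1=0$ and the bound is trivial, or else the set in the statement is empty and the claim must be read with that convention. Otherwise $\lambda_1=0$ again, and the inequality $0\le\max\{s_2(i)\}$ over the non-isolated $i\in S$ holds as long as some such value is nonnegative. For example, whenever $\lambda_1^2\le\max$ over the full support, restricting to non-isolated vertices changes nothing except dropping zeros.

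To make the $\lambda_1=0$ case airtight, I would note the following. If some $i\in S$ is isolated, then $\lambda_1=0$, and we need $\max_{i\in S,\ d_i>0}s_2(i)\ge 0$. Since $A^2$ is positive semidefinite and $x\ge0$ is a $0$-eigenvector with $A x=0$, applying the $k=2$ weighted identity $\sum_i x_i s_2(i)=\mathbf 1^TA^2x=0$ shows that some $i\in S$ has $s_2(i)\ge0$. The remaining care is to ensure that this $i$ can be taken to be non-isolated. I expect this bookkeeping on isolated vertices to be the only real obstacle; the main identities are routine.
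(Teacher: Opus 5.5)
Your main argument is the paper's own proof. You apply Theorem~\ref{improve_Stanic_result} with $k=1$ and $k=2$, using $s_1(i)=d_i^+-d_i^-$ and $s_2(i)=\sum_{j\in N_i^+}s_1(j)-\sum_{j\in N_i^-}s_1(j)=d_i(m_i^+-m_i^-)$, and that part is correct.

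The paper says nothing about isolated vertices in $S$. The loose end you leave open there (making sure ``this $i$ can be taken to be non-isolated'') closes in one line, and it does not need your positive-semidefinite / weighted-identity detour. That detour would not close it anyway, because it cannot tell an isolated vertex with $s_2(i)=0$ from a non-isolated one.

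Here is the one-line fix. If some $i\in S$ is isolated, then $\lambda_1 x_i=(Ax)_i=0$ forces $\lambda_1=0$. All eigenvalues are then $\le 0$ and sum to $\mathrm{tr}\,A=0$, so they all vanish. Hence $A=0$ and $\Sigma$ has no edges.

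So your ``otherwise'' case, with $\lambda_1=0$ and both isolated and non-isolated vertices in $S$, never occurs. Either $\Sigma$ has an edge, in which case $S$ has no isolated vertices and the $k=2$ bound is exactly the stated one. Or $\Sigma$ is edgeless, in which case the index set in the second inequality is empty. That case is not ``trivial''; the statement implicitly assumes at least one edge, as the paper's proof also does.

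For the same reason, drop your remark that restricting to non-isolated vertices ``changes nothing except dropping zeros''. In general, dropping zeros can lower a maximum. Here it is harmless only because no isolated vertices lie in $S$ when $\Sigma$ has an edge.
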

	\begin{proof}
		The first inequality follows by using Theorem \ref{improve_Stanic_result} for $k=1$ and the fact that $s_1(i) = d_i^+-d_i^-$. For the second inequality consider for a non-isolated vertex $i$,
		\begin{align*}
			s_2(i)&=(A^2\mathbf{1})_i\\
			&= w_{2}^{+}(i)-w_{2}^{-}(i)\\
			&=\sum \limits_{j \in N_i^+}(d_j^+-d_j^-) - \sum \limits_{j \in N_i^-}(d_j^+-d_j^-)\\
			&=d_i(m^+_i-m^-_i)
		\end{align*}
		Now using Theorem \ref{improve_Stanic_result} for $k=2$ gives the second inequality.
	\end{proof}
	
	For a signed graph without isolated vertices, Theorem \ref{weighted_bound} gives another useful consequence.
	\begin{corollary}
		Let $\Sigma$ be a signed graph with no isolated vertices. If a non-negative eigenvector $x$ is associated with the largest eigenvalue $\lambda_{1}(\Sigma)$ of a signed graph  $\Sigma$ with support $S$, then 
		\[\lambda_{1}(\Sigma)\leq \max \limits_{i \in S} \dfrac{\sum \limits_{j \in N_i^+}d_j-\sum \limits_{j \in N_i^-}d_j}{d_i}\]
	\end{corollary}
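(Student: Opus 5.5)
The plan is to apply Theorem \ref{weighted_bound} with $k=1$ and the positive test vector $y$ given by the degrees, $y_j=d_j$. Since $\Sigma$ has no isolated vertices, every $d_j\geq 1$, so $y$ is indeed a positive vector and Theorem \ref{weighted_bound} applies.

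The next step is to compute $R^1_i(y)$. For $k=1$ the walk counts are simple: $w_1^+(i,j)-w_1^-(i,j)=a_{ij}$, which equals $1$ if $j\in N_i^+$, $-1$ if $j\in N_i^-$, and $0$ otherwise. Hence
\[
R^1_i(y)=\frac{(Ay)_i}{y_i}=\frac{\sum_{j\in N_i^+}d_j-\sum_{j\in N_i^-}d_j}{d_i}.
\]
The upper inequality of Theorem \ref{weighted_bound} then gives $\lambda_1(\Sigma)\leq \max_{i\in S}R^1_i(y)$, which is exactly the claimed bound.

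The only point needing care is the positivity of $y$, and the hypothesis of no isolated vertices supplies it. The rest is a direct substitution, so I expect no real obstacle.
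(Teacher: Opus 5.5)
Your proposal is correct and is exactly the paper's argument: substitute $k=1$ and $y=(d_1,\dots,d_n)$ into Theorem~\ref{weighted_bound} and take its upper bound. You also make explicit why $y$ is a positive vector, namely that there are no isolated vertices, which the paper leaves implicit.
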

	\begin{proof}
		Substitute $y= (d_1, \dots, d_n)$ and $k=1$ in Theorem \ref{weighted_bound}.
	\end{proof}    
	
	Next, we provide examples illustrating that the bound established in Theorem \ref{improve_Stanic_result} is sharper than the corresponding bound given in Theorem \ref{Stanic_result}.
	
	\begin{example}
		Consider the signed graph $\Sigma=(K_{4}, -)\cup(K_{3},+)$. The adjacency matrix is given as follows: $$A(\Sigma)=\begin{bmatrix}
			0&-1&-1&-1&0&0 &0 \\
			-1&0&-1&-1&0&0 &0 \\
			-1&-1&0&-1&0&0 &0 \\
			-1&-1&-1&0&0&0 &0 \\
			0&0&0&0&0&1 &1 \\
			0&0&0&0&1&0 &1 \\
			0&0&0&0&1&1 &0 \\   
		\end{bmatrix}$$
		It is easy to verify that the eigenvalues of $A(\Sigma)$ are $2, 1, 1, 1, -1, -1,$ and $-3$. Hence, the largest eigenvalue of $A(\Sigma)$ is $\lambda_{1}(\Sigma)=2$, with a corresponding nonnegative eigenvector $x=(0,0,0,0,1,1,1)$. Note that $\max\{d_{i}^{+}-d_{i}^{-}:i \in supp(x)\}=2$, whereas $\max\{|d_{i}^{+}-d_{i}^{-}|:1\leq i\leq 7\}=3$. Therefore, Theorem \ref{improve_Stanic_result} yields $\lambda_{1}(\Sigma)\leq 2$, while Theorem \ref{Stanic_result} gives the bound $\lambda_{1}(\Sigma)\leq 3$.
		
		Similarly, $\max\{w_{3}^{+}(i)-w_{3}^{-}(i):i \in supp(x)\}=8$, whereas $\max\{|w_{3}^{+}(i)-w_{3}^{-}(i)|:1\leq i\leq 7\}=27$. Consequently, Theorem \ref{improve_Stanic_result} implies $\lambda_{1}^{3}(\Sigma)\leq 8$, whereas Theorem \ref{Stanic_result} yields the estimate $\lambda_{1}^{3}(\Sigma)\leq 27$. Thus, the bounds obtained from Theorem \ref{improve_Stanic_result} are attained and are strictly sharper than those provided by Theorem \ref{Stanic_result}. 
	\end{example} 
	
	\begin{example}
		\begin{figure}
			\centering
			\includegraphics[width=0.4\linewidth]{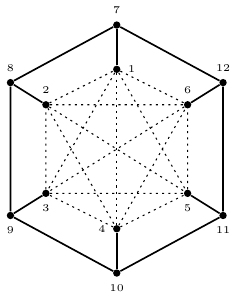}
			\caption{Connected Signed Graph (The negative edges are dashed lines)}
			\label{fig:Connected signed graph}
		\end{figure}
		Consider the signed graph $\Sigma$ in Figure \ref{fig:Connected signed graph}. The adjacency matrix is given as follows: $$A(\Sigma)=\begin{bmatrix}
			-A(K_{6})&I\\
			I&A(C_{6}) \\  
		\end{bmatrix}.$$
		The eigenvalues of $A(\Sigma)$ are $\frac{-3+\sqrt{53}}{2}, 2,2,\sqrt{2},\sqrt{2}, \frac{-1+\sqrt{13}}{2},0,0, -\sqrt{2}, -\sqrt{2}, \frac{-1-\sqrt{13}}{2}$ and $ \frac{-3-\sqrt{53}}{2}.$ Hence, the largest eigenvalue of $A(\Sigma)$ is $\lambda_{1}(\Sigma)=\frac{-3+\sqrt{53}}{2}$, with a corresponding nonnegative eigenvector $x=(2,2,2,2,2,2,7+\sqrt{53},7+\sqrt{53},7+\sqrt{53},7+\sqrt{53},7+\sqrt{53},7+\sqrt{53})$.
		Note that $\max\{d_{i}^{+}-d_{i}^{-}: i \in supp(x)\}=3$, whereas $\max\{|d_{i}^{+}-d_{i}^{-}|:1\leq i\leq 12\}=4$. Therefore, Theorem \ref{improve_Stanic_result} yields $\lambda_{1}(\Sigma)\leq 3$, while Theorem \ref{Stanic_result} gives the bound $\lambda_{1}(\Sigma)\leq 4$.
		
		Similarly, $\max\{w_{3}^{+}(i)-w_{3}^{-}(i):i \in supp(x)\}=27$, whereas $\max\{|w_{3}^{+}(i)-w_{3}^{-}(i)|:1\leq i\leq 12\}=113$. Consequently, Theorem \ref{improve_Stanic_result} implies $\lambda_{1}^{3}(\Sigma)\leq 27$, whereas Theorem \ref{Stanic_result} yields the estimate $\lambda_{1}^{3}(\Sigma)\leq 113$. Thus, the bounds obtained from Theorem \ref{improve_Stanic_result} are strictly sharper than those provided by Theorem \ref{Stanic_result}.
		
	\end{example}

	Next we switch our discussion to the least eigenvalue $\lambda_n(\Sigma)$ of a signed graph $\Sigma$. We note down a simple technique to derive lower bounds for $\lambda_n(\Sigma)$ using the Weyl inequality. First, we recall the Weyl inequality. Let $\lambda_{1}(M)\geq \lambda_{2}(M)\geq \cdots \geq \lambda_{n}(M)$ be the ordering of the eigenvalues of a symmetric matrix $M$. Then the eigenvalues of the sum of two symmetric matrices satisfy the following inequality. 
	\begin{lemma}\emph{(\cite{MR2978290}, Weyl inequality)}\label{weyl_ineq}
		Let $A, B \in M_{n}$ be symmetric matrices. Then  
		\[\lambda_{n}(A)+\lambda_{n}(B)\leq \lambda_{n}(A+B).\]
		Equality holds if and only if there is a nonzero vector $x$ such that $Ax=\lambda_{n}(A)x$, $Bx=\lambda_{n}(B)x$ and $(A+B)x=\lambda_{n}(A+B)x$.
	\end{lemma}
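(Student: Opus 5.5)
The plan is to use the Courant--Fischer (Rayleigh quotient) characterisation of the least eigenvalue of a real symmetric matrix,
\[
\lambda_n(M)=\min_{x\neq 0}\frac{x^TMx}{x^Tx}.
\]
Take a unit vector $x$ with $(A+B)x=\lambda_n(A+B)x$. Then
\[
\lambda_n(A+B)=x^T(A+B)x=x^TAx+x^TBx\ \ge\ \lambda_n(A)+\lambda_n(B),
\]
since each Rayleigh quotient is bounded below by the corresponding least eigenvalue. This settles the inequality.

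For the equality characterisation, the \emph{if} direction is immediate. If a nonzero $x$ is a common eigenvector for $A$, $B$ and $A+B$ with the stated eigenvalues, then
\[
\lambda_n(A+B)x=(A+B)x=(\lambda_n(A)+\lambda_n(B))x,
\]
so the two sides agree.

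For the \emph{only if} direction, suppose equality holds and take the same unit eigenvector $x$ of $A+B$ for $\lambda_n(A+B)$. Equality in the chain forces
\[
x^TAx=\lambda_n(A)\quad\text{and}\quad x^TBx=\lambda_n(B),
\]
because each term is at least the corresponding minimum and their sum equals the sum of the minima.

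The key step is that a unit vector attaining the minimum of the Rayleigh quotient must be an eigenvector for $\lambda_n$. To see this, expand $x=\sum_i \alpha_i u_i$ in an orthonormal eigenbasis $\{u_i\}$ of $A$. Then
\[
x^TAx-\lambda_n(A)=\sum_i \alpha_i^2\bigl(\lambda_i(A)-\lambda_n(A)\bigr)\ \ge 0,
\]
and every term is nonnegative. So the expression vanishes only if $\alpha_i=0$ whenever $\lambda_i(A)>\lambda_n(A)$, which gives $Ax=\lambda_n(A)x$. The same argument gives $Bx=\lambda_n(B)x$, and $(A+B)x=\lambda_n(A+B)x$ holds by the choice of $x$.

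The only slightly delicate point is this last step. It relies on symmetry, through the orthonormal eigenbasis, to turn equality of Rayleigh quotients into eigenvector equations. Everything else is routine.
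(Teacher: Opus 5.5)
Your argument is correct and complete. The Rayleigh-quotient bound gives the inequality, and the orthonormal-eigenbasis expansion correctly turns equality of Rayleigh quotients into the eigenvector equations. The paper gives no proof of its own: it simply cites Horn and Johnson's Weyl inequality, whose Courant--Fischer argument, specialised to the least eigenvalue, should reduce to essentially what you wrote.
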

	
	The adjacency matrix of $\Sigma$ admits the decomposition
	\[
	A(\Sigma)=A(\Sigma_{+})+A(\Sigma_{-}),
	\]
	where $A(\Sigma_{+})$ and $A(\Sigma_{-})$ are the adjacency matrices of the positive and negative spanning subgraphs of $\Sigma$, respectively.
	By Lemma \ref{weyl_ineq}, 
	\begin{align} \label{inequ_adj_one}
		\nonumber \lambda_{n}(\Sigma)&=\lambda_{n}(A(\Sigma_{+})+A(\Sigma_{-}))\\
		\nonumber   &\geq \lambda_{n}(\Sigma_{+})+\lambda_{n}(\Sigma_{-}) \\
		&= \lambda_{n}(\Sigma_{+})-\lambda_{1}(-\Sigma_{-}).
	\end{align}
	Substituting lower bounds for $\lambda_n(\Sigma_+)$ and $-\lambda_1(-\Sigma_-)$ gives lower bound for the least eigenvalue of $\Sigma$. We illustrate one such example and characterize the graphs that attain the equality in the bound.
	
	Nikiforov proved the following result in \cite{nikiforov2002some}.
	\begin{theorem}\emph{(\cite{nikiforov2002some}, Theorem 1)}\label{Niki_largest}
		Let $G$ be a graph with $n$ vertices, $m$ edges and clique number $\omega(G).$ Let $\lambda_{1}(G)$ be the largest eigenvalue of the adjacency matrix $A(G).$ Then \[\lambda_{1}^{2}(G)\leq 2m \frac{\omega(G)-1}{\omega(G)}.\]
	\end{theorem}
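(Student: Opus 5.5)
This is Nikiforov's bound, $\lambda_1^2(G)\le 2m\,\frac{\omega-1}{\omega}$. I would prove it via the Motzkin--Straus theorem together with a Cauchy--Schwarz step. Write $\omega=\omega(G)$ and let $x$ be a unit nonnegative Perron eigenvector of $A(G)$, so $\lambda_1=\sum_{ij\in E}2x_ix_j$.

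First I would record the Motzkin--Straus inequality. For any nonnegative $y$ with $\sum_i y_i=1$,
\[
\sum_{ij\in E}2y_iy_j\le \frac{\omega-1}{\omega}.
\]
The standard proof moves weight between two nonadjacent vertices $u,v$. The objective is linear along the segment shifting mass from $u$ to $v$, so one may push all mass of one of them to the other without decreasing the objective. Repeating this, the support eventually becomes a clique of size $k\le\omega$. On a clique the objective is $1-\sum y_i^2\le 1-\frac1k\le 1-\frac1\omega$.

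Next, I would apply this with $y_i=x_i^2$, which satisfies $\sum_i y_i=\|x\|^2=1$. This gives
\[
\sum_{ij\in E}x_i^2x_j^2\le \tfrac12\cdot\frac{\omega-1}{\omega}.
\]
Cauchy--Schwarz over the $m$ edges then yields
\[
\lambda_1^2=\Bigl(2\sum_{ij\in E}x_ix_j\Bigr)^2\le 4m\sum_{ij\in E}x_i^2x_j^2\le 2m\,\frac{\omega-1}{\omega}.
\]

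Since the theorem is quoted from \cite{nikiforov2002some}, one could also simply cite it. If a proof is given, the only nonroutine step is the Motzkin--Straus bound, specifically the mass-shifting argument: it must terminate, and it must not decrease the objective. Both follow because each shift reduces the support size by one, and the objective is linear in the shift parameter, so an endpoint of the segment does at least as well as the interior.
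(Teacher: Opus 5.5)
The paper gives no proof of this statement; it quotes it from \cite{nikiforov2002some} and uses it as a black box. Your argument is correct and is essentially Nikiforov's original proof. Cauchy--Schwarz over the $m$ edges gives $\lambda_1^2\le 4m\sum_{ij\in E}x_i^2x_j^2$. Motzkin--Straus applied to $y_i=x_i^2$ then bounds this sum by $\frac12\cdot\frac{\omega-1}{\omega}$, so the constants come out exactly right.

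Your sketch of Motzkin--Straus is also sound. The objective is linear in the shift parameter precisely because $u\not\sim v$. The support shrinks by one at each step until it is a clique of size $k\le\omega$, where the objective is $1-\sum y_i^2\le 1-1/k$.

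One small point: the nonnegative Perron choice of $x$ is not needed. Any unit $\lambda_1$-eigenvector works, since $y_i=x_i^2\ge 0$ automatically.
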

	In the same paper \cite{nikiforov2002some}, Nikiforov gave another spectral radius bound as follows.
	\begin{theorem}\emph{(\cite{nikiforov2002some}, Theorem 7)}\label{nikiforov_bound}
		Let $G$ be a graph with $n$ vertices, $m$ edges and minimum degree $\delta$. Let $\lambda_{1}(G)$ be the largest eigenvalue of the adjacency matrix $A(G).$ Then \[\lambda_{1}(G)\leq \frac{\delta-1}{2} + \sqrt{2m-n\delta+\frac{(1+\delta)^2}{4}}.\]
	\end{theorem}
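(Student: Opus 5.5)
The plan is the classical Perron-vector argument of Hong--Shu--Fang type: bound the quadratic expression $\lambda_1^2-(\delta-1)\lambda_1$ by evaluating $A^2-(\delta-1)A$ at the vertex where the Perron vector is largest. The target intermediate inequality is
\[
\lambda_1^2-(\delta-1)\lambda_1\le 2m-(n-1)\delta=2m-n\delta+\delta .
\]
Solving this quadratic in $\lambda_1$ gives
\[
\lambda_1\le \frac{\delta-1}{2}+\sqrt{2m-n\delta+\delta+\frac{(\delta-1)^2}{4}}
=\frac{\delta-1}{2}+\sqrt{2m-n\delta+\frac{(1+\delta)^2}{4}},
\]
which is exactly the statement of Theorem~\ref{nikiforov_bound}.

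\emph{Reduction.} It suffices to treat a component $H$ of $G$ with $\lambda_1(H)=\lambda_1(G)$. Since $n$, $m$ and $\delta$ enter the right-hand side monotonically in the needed direction, this passage to a component needs a quick check; otherwise one can run the argument directly on $G$ with a nonnegative Perron vector. Let $x\ge 0$ be a Perron vector, normalized so that $x_u=\max_k x_k=1$.

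\emph{Main computation.} Write $c_{uk}=(A^2)_{uk}$ for the number of common neighbours of $u$ and $k$ (so $c_{uu}=d_u$), and let $a_{uk}$ denote the entries of $A=A(G)$. Reading off the $u$-th row of $(A^2-(\delta-1)A)x=(\lambda_1^2-(\delta-1)\lambda_1)x$ gives
\[
\lambda_1^2-(\delta-1)\lambda_1=d_u+\sum_{k\ne u}\bigl(c_{uk}-(\delta-1)a_{uk}\bigr)x_k .
\]
If every coefficient on the right is nonnegative, then $x_k\le 1$ bounds the right side by $d_u+\sum_{k\ne u}c_{uk}-(\delta-1)d_u$. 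The degree count then finishes it:
\begin{align*}
\sum_{k\ne u}c_{uk}&=\sum_{j\sim u}(d_j-1),\\
\sum_{j\sim u}d_j&=2m-d_u-\sum_{j\not\sim u,\, j\ne u}d_j\le 2m-d_u-(n-1-d_u)\delta .
\end{align*}
Substituting these, the $d_u$ terms cancel and the total is $2m-(n-1)\delta$, as required.

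\emph{Main obstacle.} The difficulty is the coefficients $c_{uk}-(\delta-1)$ for neighbours $k\sim u$, which may be negative, so that replacing $x_k$ by $1$ is not legitimate termwise. I would try first to regroup the sum as $\sum_{j\sim u}(Ax)_j-(\delta-1)\sum_{k\sim u}x_k$, that is,
\[
\sum_{j\sim u}\Bigl(\sum_{k\sim j}x_k-(\delta-1)x_j\Bigr).
\]
For each $j\sim u$ the inner term is $x_u+\sum_{k\sim j,\,k\ne u}x_k-(\delta-1)x_j$. Since $x_j\le 1$ and $j$ has at least $\delta-1$ neighbours other than $u$, I would aim to show that the deficit caused by the factor $x_j$ is compensated, using the slack between $d_j-1$ and $\delta-1$. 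Concretely, the goal is
\[
\sum_{k\sim j}x_k-(\delta-1)x_j\le d_j-(\delta-1),
\]
which can be derived from the eigenvalue equation $\lambda_1x_j=\sum_{k\sim j}x_k$ together with $x_k\le1$. Summing this over $j\sim u$ gives the same bound as in the nonnegative case, and if it holds the proof is complete. If this local inequality fails, the fallback is to reproduce Nikiforov's original proof, which I expect to handle precisely this point by a slightly different averaging.
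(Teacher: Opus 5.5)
\textbf{There is a genuine gap, and it is at the step you flagged.} Your local inequality
\[
\sum_{k\sim j}x_k-(\delta-1)x_j\le d_j-(\delta-1)
\]
does not follow from $\lambda_1x_j=\sum_{k\sim j}x_k$ and $x_k\le 1$, and it is false in general. Regrouping at $u$ cannot rescue the strategy either. The inequality it would prove is $\lambda_1^2-(\delta-1)\lambda_1\le \sum_{j\sim u}d_j-(\delta-1)d_u$ for a vertex $u$ maximizing the Perron vector, and that inequality is itself false.

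Take $G=K_{2,5}$. Then $\delta=2$ and $\lambda_1=\sqrt{10}$. The Perron vector is $1$ on the two vertices of degree $5$ and $2/\sqrt{10}$ on the five vertices of degree $2$. For $u$ of degree $5$ and any $j\sim u$,
\[
\sum_{k\sim j}x_k-x_j=2-2/\sqrt{10}>1=d_j-(\delta-1).
\]
Summing over the five neighbours gives $\lambda_1^2-\lambda_1=10-\sqrt{10}\approx 6.84$, whereas $\sum_{j\sim u}d_j-d_u=5$. The theorem still holds here, since $2m-(n-1)\delta=8$. However, the row of $A^2-(\delta-1)A$ attaining $8$ belongs to a degree-$2$ vertex, not to $u$. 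Evaluating at the maximal Perron entry only works when the coefficients multiplying $x_k\le 1$ are nonnegative, and the negative entries $c_{uk}-(\delta-1)$ are exactly what break it.

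\textbf{The fix is to use all rows at once instead of the row of $u$.} Put $B=A^2-(\delta-1)A$, and let $x\ge 0$, $x\neq 0$, be a Perron vector of $G$; no passage to a component is needed. Since $B$ is symmetric and $Bx=(\lambda_1^2-(\delta-1)\lambda_1)x$,
\[
(\lambda_1^2-(\delta-1)\lambda_1)\,\mathbf{1}^Tx=\mathbf{1}^TBx=(B\mathbf{1})^Tx=\sum_k r_kx_k,\qquad r_k=\sum_{j\sim k}d_j-(\delta-1)d_k .
\]
This step uses only $x_k\ge 0$, not the sign of the entries of $B$. Your own degree count, applied at an arbitrary vertex $k$ rather than at $u$, gives $r_k\le 2m-(n-1)\delta$ for every $k$. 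Since $\mathbf{1}^Tx>0$, this yields $\lambda_1^2-(\delta-1)\lambda_1\le 2m-(n-1)\delta$, and your final quadratic step finishes the proof.

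This is the same duality trick as in the paper's Theorem~\ref{weighted_bound} with $y=\mathbf{1}$. The paper itself only quotes this result from Nikiforov and gives no proof of it.
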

	Stanley gave the following spectral radius bound \cite{stanley1987bound} in terms of the number of edges.
	\begin{theorem}\emph{\cite{stanley1987bound}}\label{stanley_bound}
		Let $G$ be a graph with $n$ vertices and $m$ edges. Let $\lambda_{1}(G)$ be the largest eigenvalue of the adjacency matrix $A(G).$ Then \[\lambda_{1}(G)\leq -\frac{1}{2} + \sqrt{2m+\frac{1}{4}}.\]
	\end{theorem}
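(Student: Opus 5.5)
The plan is the classical Perron-vector argument: bound $\lambda_1^2$ vertex by vertex, then absorb the correction terms with a second estimate that produces $\lambda_1$ itself. This yields the quadratic inequality $\lambda_1^2+\lambda_1\le 2m$, which is equivalent to the statement.

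Write $\lambda=\lambda_1(G)$. Since $A(G)$ is nonnegative, Perron--Frobenius gives a unit eigenvector $x=(x_1,\dots,x_n)$ for $\lambda$ with $x_i\ge 0$ (connectedness is not needed). If $m=0$ the claim is trivial, so we may assume $\lambda\ge 1$.

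\emph{Step 1 (local bound).} The eigen-equation gives $\lambda x_i=\sum_{j\sim i}x_j$ for each vertex $i$. By Cauchy--Schwarz,
\[
\lambda^2x_i^2\le d_i\sum_{j\sim i}x_j^2 .
\]
Since $i$ is not adjacent to itself and $\|x\|=1$, we have $\sum_{j\sim i}x_j^2\le 1-x_i^2$. Hence
\[
\lambda^2x_i^2\le d_i(1-x_i^2).
\]

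\emph{Step 2 (summation).} Summing over $i$ and using $\sum_i x_i^2=1$ and $\sum_i d_i=2m$ gives
\[
\lambda^2+\sum_i d_ix_i^2\le 2m .
\]

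\emph{Step 3 (the key estimate).} It remains to show $\sum_i d_ix_i^2\ge\lambda$. This follows from
\[
\lambda=x^TA(G)x=\sum_{ij\in E}2x_ix_j\le\sum_{ij\in E}(x_i^2+x_j^2)=\sum_i d_ix_i^2 ,
\]
using the AM--GM inequality $2x_ix_j\le x_i^2+x_j^2$. Combining with Step 2 gives $\lambda^2+\lambda\le 2m$. Completing the square, $(\lambda+\tfrac12)^2\le 2m+\tfrac14$, and since $\lambda+\tfrac12>0$ this is exactly $\lambda\le -\tfrac12+\sqrt{2m+\tfrac14}$.

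The main obstacle is spotting Step 3, namely that the leftover term $\sum_i d_ix_i^2$ can be bounded below by $\lambda$ itself rather than discarded. Dropping that term only gives the weaker bound $\lambda\le\sqrt{2m}$; keeping it via the edge-wise AM--GM is what produces the extra $-\tfrac12$. Everything else is routine.
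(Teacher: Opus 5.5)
Your proof is correct: Cauchy--Schwarz gives $\lambda_1^2\le 2m-\sum_i d_ix_i^2$, the edgewise AM--GM gives $\sum_i d_ix_i^2\ge x^TA(G)x=\lambda_1$, and together these yield $\lambda_1^2+\lambda_1\le 2m$, which is the stated bound because $\lambda_1\ge 0$. The paper gives no proof of this statement and simply cites \cite{stanley1987bound}; your argument is the standard proof of Stanley's bound, and the nonnegativity from Perron--Frobenius is not needed, since both inequalities hold for any real unit eigenvector of $\lambda_1$.
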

	
	For a signed graph $\Sigma$, let $\epsilon(\Sigma)$ denote the minimum number of edges whose removal causes $\Sigma$ to be balanced, called the frustration index of $\Sigma$. Let $\omega_b(\Sigma)$ be the maximum order of a balanced complete signed subgraph of $\Sigma$, called the balanced clique number of $\Sigma$. In 2023 \cite{kannan2023signed}, the following inequality was proved by Kannan and Pragada for signed graphs.
	\begin{lemma} \emph{(\cite{kannan2023signed}, Theorem 3.3)}\label{KannanShiva_BoundThm}
		Let $\Sigma$ be a signed graph with $m$ edges, $\epsilon(\Sigma)$ be the frustration index and $\omega_b(\Sigma)$ be the balanced clique number. Then
		\begin{equation*}
			\lambda_1^2(\Sigma) \leq 2(m-\epsilon(\Sigma)) \left(1-\dfrac{1}{\omega_b(\Sigma)}\right)
		\end{equation*}
	\end{lemma}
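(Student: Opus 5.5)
The plan is to reduce to Nikiforov's bound, Theorem~\ref{Niki_largest}, applied to the positive part of a suitably switched copy of $\Sigma$. Let $\mathbf{x}$ be a unit eigenvector of $A(\Sigma)$ for $\lambda_1(\Sigma)$.

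\emph{Step 1: switch so that $\mathbf{x}\ge 0$.} Switch $\Sigma$ with respect to $S=\{i: x_i<0\}$, and call the result $\Sigma'$. The spectrum is unchanged, and the vector $|\mathbf{x}|=(|x_1|,\dots,|x_n|)$ is a nonnegative unit eigenvector of $A(\Sigma')$ for $\lambda_1$. Since $\operatorname{tr}A(\Sigma)=0$, we have $\lambda_1\ge 0$, so it suffices to bound $\lambda_1$ itself and then square.

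\emph{Step 2: discard the negative edges.} Because $|\mathbf{x}|\ge 0$, every negative edge of $\Sigma'$ contributes a nonpositive term to the quadratic form. Hence
\[
\lambda_1(\Sigma)=|\mathbf{x}|^TA(\Sigma')|\mathbf{x}| \le |\mathbf{x}|^TA(\Sigma'_+)|\mathbf{x}| \le \lambda_1(\Sigma'_+),
\]
where the last step is the Rayleigh quotient for the unsigned graph $\Sigma'_+$. If $\Sigma'_+$ has no edges, this forces $\lambda_1=0$ and the bound is trivial. So assume $\Sigma'_+$ has at least one edge.

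\emph{Step 3: compare the parameters of $\Sigma'_+$ with those of $\Sigma$.} Deleting all negative edges of $\Sigma'$ leaves an all-positive, hence balanced, signed graph. The frustration index is switching invariant, so $|E^-(\Sigma')|\ge \epsilon(\Sigma')=\epsilon(\Sigma)$. Therefore $\Sigma'_+$ has at most $m-\epsilon(\Sigma)$ edges.

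Next, any clique of the unsigned graph $\Sigma'_+$ is an all-positive complete subgraph of $\Sigma'$. Such a subgraph is balanced, and balance is preserved under switching, so the corresponding vertex set spans a balanced complete subgraph of $\Sigma$. Thus $\omega(\Sigma'_+)\le\omega_b(\Sigma)$.

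\emph{Step 4: conclude.} Apply Theorem~\ref{Niki_largest} to $\Sigma'_+$, and use that the function $\omega\mapsto 1-1/\omega$ is increasing:
\[
\lambda_1^2(\Sigma)\le \lambda_1^2(\Sigma'_+)\le 2|E(\Sigma'_+)|\Bigl(1-\tfrac{1}{\omega(\Sigma'_+)}\Bigr)\le 2(m-\epsilon(\Sigma))\Bigl(1-\tfrac{1}{\omega_b(\Sigma)}\Bigr).
\]

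The step that needs the most care is Step 3. The correct switching must be chosen, namely the one determined by the sign pattern of the eigenvector, and one must check that both the edge-count bound via the frustration index and the clique bound via balanced cliques survive that switching. Each of these follows from the switching invariance of cycle signs. The remaining steps are routine.
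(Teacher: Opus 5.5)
Your proof is correct. The paper gives no argument for this lemma; it simply quotes it from \cite{kannan2023signed}. So there is no internal proof to compare against. What you supply is a self-contained derivation that uses only switching invariance and Nikiforov's bound (Theorem~\ref{Niki_largest}), both already available in the paper.

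Each step checks out:
\begin{itemize}
\item With $D=\mathrm{diag}(\pm 1)$ recording the signs of $\mathbf{x}$, one has $A(\Sigma')=DA(\Sigma)D$ and $D\mathbf{x}=|\mathbf{x}|$. Hence $|\mathbf{x}|$ is a nonnegative $\lambda_1$-eigenvector of $A(\Sigma')$.
\item Each negative edge then contributes $-2|x_i||x_j|\le 0$ to the quadratic form.
\item The bounds $|E^-(\Sigma')|\ge\epsilon(\Sigma')=\epsilon(\Sigma)$ and $\omega(\Sigma'_+)\le\omega_b(\Sigma)$ both follow from switching invariance of balance.
\item The final substitution is legitimate because $0\le\lambda_1(\Sigma)\le\lambda_1(\Sigma'_+)$, and both $|E(\Sigma'_+)|$ and $1-1/\omega$ are nonnegative and monotone.
\end{itemize}

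A side benefit of this route is that it isolates exactly where slack can occur: dropping the negative edges, the Rayleigh quotient for $\Sigma'_+$, Nikiforov's bound, and the two parameter comparisons. The equality characterization quoted as Lemma~\ref{KS_Bound_Equal}, which the paper relies on in Lemma~\ref{Fav_Equality}, can be analyzed by forcing each of these to be tight.
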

	The equality cases in the above result were characterized by Lan et al..
	\begin{lemma}\emph{(\cite{lan2023remarks}, Theorem 1.7)}\label{KS_Bound_Equal}
		Let $\Sigma$ be a connected signed graph with $m \geq 1$ edges, $\epsilon(\Sigma)$ be the frustration index and $\omega_b(\Sigma)$ be the balanced clique number. Then
		\begin{equation}\label{KannanShiva_Bound}
			\lambda_1^2(\Sigma) = 2(m-\epsilon(\Sigma)) \left(1-\dfrac{1}{\omega_b(\Sigma)}\right)
		\end{equation}
		holds if and only if $\Sigma$ satisfies one of the following:
		\begin{itemize}

			\item $\omega_b(\Sigma)>2$ and 
			$\Sigma$ is a balanced regular complete $\omega_b(\Sigma)$-partite graph; 
			
			\item $\omega_b(\Sigma)=2$ and
			$\Sigma$ is a balanced complete bipartite graph.
		\end{itemize}
	\end{lemma}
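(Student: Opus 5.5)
The plan is to re-run the proof of Lemma \ref{KannanShiva_BoundThm} through a chain of inequalities, and then read off what equality in every link forces. Let $x$ be a unit eigenvector for $\lambda_1(\Sigma)$. Switch $\Sigma$ with respect to the set $\{u: x_u<0\}$ to get $\Sigma'$. Switching preserves the spectrum, $m$, $\epsilon$ and $\omega_b$, and the switched eigenvector $|x|$ is nonnegative; I rename it $x$. Let $H$ be the unsigned spanning subgraph of $\Sigma'$ formed by its positive edges. Then
\[
\lambda_1(\Sigma)=x^TA(\Sigma')x=x^TA(H)x-2\sum_{ij\in E^-(\Sigma')}x_ix_j\le x^TA(H)x\le \lambda_1(H).
\]
Deleting $E^-(\Sigma')$ leaves $\Sigma'$ balanced, so $|E^-(\Sigma')|\ge \epsilon(\Sigma)$ and hence $e(H)\le m-\epsilon(\Sigma)$. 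Every clique of $H$ is an all-positive, hence balanced, clique of $\Sigma'$, so $\omega(H)\le\omega_b(\Sigma)$. Theorem \ref{Niki_largest} then gives $\lambda_1^2(H)\le 2e(H)(1-1/\omega(H))\le 2(m-\epsilon)(1-1/\omega_b)$, which proves the inequality.

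For the forward direction, suppose equality holds in \eqref{KannanShiva_Bound}. Then every step above is tight: $e(H)=m-\epsilon$, $\omega(H)=\omega_b$, $x^TA(H)x=\lambda_1(H)$, $x_ix_j=0$ on every negative edge of $\Sigma'$, and equality holds in Nikiforov's bound for $H$. I will invoke Nikiforov's equality characterization (the Motzkin--Straus equality case): apart from isolated vertices, $H$ is a regular complete $\omega$-partite graph, or a complete bipartite graph when $\omega=2$.

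The main obstacle is excluding isolated vertices of $H$ and negative edges, and this is where connectedness enters. Since $x$ attains $\lambda_1(H)$, it is a Perron vector of $H$. So $x$ is positive exactly on the nontrivial component $H_0$ of $H$ (the unique component carrying $\lambda_1(H)$) and zero on the other vertices. Take a vertex $u$ with $x_u=0$. Any edge from $u$ to $V(H_0)$ must be negative, since a positive one would enlarge $H_0$. Now use the eigenvalue equation of $\Sigma'$ at $u$:
\[
0=\lambda_1 x_u=\sum_{j\sim u}\sigma'(uj)x_j=-\sum_{j\in N(u)\cap V(H_0)}x_j .
\]
Since $x_j>0$ on $V(H_0)$, this forces $u$ to have no neighbours in $V(H_0)$. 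By connectedness of $\Sigma'$, every vertex lies in $V(H_0)$, so $x>0$. Then $x_ix_j=0$ on negative edges means $E^-(\Sigma')=\emptyset$. Hence $\Sigma'=H$, so $\Sigma$ is balanced, and $\Sigma'$ is a complete multipartite graph of the stated type with $\omega_b=\omega(H)$.

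For the converse, for a balanced $\Sigma$ I switch to the all-positive graph, so that $\epsilon=0$, and compute directly. For a regular complete $k$-partite graph with parts of size $s$: $\lambda_1=(k-1)s$, $m=k(k-1)s^2/2$ and $\omega_b=k$, so $2m(1-1/k)=(k-1)^2s^2=\lambda_1^2$. For $K_{a,b}$: $\lambda_1^2=ab=m=2m(1-\tfrac12)$. In both cases equality holds in \eqref{KannanShiva_Bound}.
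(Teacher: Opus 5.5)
Your proof is correct. There is no in-paper argument to compare it with: the paper does not prove this lemma but imports it from \cite{lan2023remarks} (Theorem 1.7). It uses it only once, for the connected all-negative graph $(C_1,-)$ in the proof of Lemma~\ref{Fav_Equality}. What you give is therefore a self-contained replacement, and it also reproves Lemma~\ref{KannanShiva_BoundThm} along the way.

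The equality analysis is sound. Tightness of the chain makes the switched eigenvector $x$ a Perron vector of the positive part $H$. Nikiforov's equality case then makes $H$ a single complete bipartite or regular complete multipartite component $H_0$ plus isolated vertices. The eigen-equation of $\Sigma'$ at a coordinate with $x_u=0$ sees only negative edges into $V(H_0)$, where $x>0$, so no edge leaves $V(H_0)$. Connectedness, which is used exactly here, then gives $x>0$ and hence $E^-(\Sigma')=\emptyset$.

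Two small points should be stated explicitly.

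\emph{The equality case of Nikiforov's bound.} Your one external ingredient is that equality holds only for complete bipartite graphs when $\omega=2$, or regular complete $\omega$-partite graphs when $\omega\ge 3$, up to isolated vertices. Theorem~\ref{Niki_largest} as quoted in the paper does not include this, so you should cite it from Nikiforov directly.

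\emph{The claim that ``every step is tight''.} Splitting $2e(H)(1-1/\omega(H))=2(m-\epsilon)(1-1/\omega_b)$ into $e(H)=m-\epsilon$ and $\omega(H)=\omega_b$ needs the common value to be positive. This holds because $m\ge 1$ forces $\lambda_1(\Sigma)>0$ (the trace is $0$ and $\sum\lambda_i^2=2m>0$). You do not actually need the split, though. Once $\Sigma'=H$ is all-positive, every clique is balanced, so $\omega_b(\Sigma)=\omega(H)$ directly. Equality in Nikiforov's bound for $H$ follows from the sandwich $\lambda_1^2(\Sigma)\le\lambda_1^2(H)\le 2e(H)(1-1/\omega(H))\le 2(m-\epsilon)(1-1/\omega_b)=\lambda_1^2(\Sigma)$ without any factor-by-factor comparison.
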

	The edge bipartiteness of a graph $G$, denoted by $\epsilon_b(G)$, is the minimum number of edges whose removal makes the graph bipartite. Using bipartiteness, Kannan and Pragada gave a modified version of a known classical bound on the least eigenvalue (see \cite{favaron1993some}) of the adjacency matrix of an unsigned graph $G$. We include Kannan and Pragada's proof in order to characterize the graphs attaining the equality.
	
	\begin{lemma}\emph{(\cite{kannan2023signed}, Corollary 3.4)}\label{smallest_eigenvalue}
		Let $G$ be a graph on $n$ vertices and $m \geq 1$ edges. Let $\epsilon_{b}(G)$ denote the edge bipartiteness and $\lambda_{n}(G)$ be the least eigenvalue of the adjacency matrix $A(G)$ of the graph $G.$ Then
		\begin{equation}\label{Favaron_Ineq}
			\lambda_{n}^{2}(G)\leq m-\epsilon_{b}(G)
		\end{equation}
	\end{lemma}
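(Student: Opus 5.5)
The plan is to pass to a signed graph whose largest eigenvalue is $-\lambda_n(G)$ and then apply the balanced-clique bound of Lemma~\ref{KannanShiva_BoundThm}. Consider the signed graph $\Sigma=(G,-)$, so that $A(\Sigma)=-A(G)$. Then
\[
\lambda_1(\Sigma)=-\lambda_n(G), \qquad \lambda_1^2(\Sigma)=\lambda_n^2(G).
\]
It therefore suffices to show $\lambda_1^2(\Sigma)\le m-\epsilon_b(G)$. For this we compute the two quantities appearing in Lemma~\ref{KannanShiva_BoundThm}: the frustration index $\epsilon(\Sigma)$ and the balanced clique number $\omega_b(\Sigma)$.

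For the frustration index, note that in $(G,-)$ a cycle is negative exactly when it has odd length. A subgraph of $(G,-)$ is therefore balanced if and only if the underlying graph has no odd cycles, i.e.\ is bipartite. Hence the minimum number of edges whose deletion makes $\Sigma$ balanced equals the minimum number whose deletion makes $G$ bipartite, giving $\epsilon(\Sigma)=\epsilon_b(G)$.

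For the balanced clique number, a complete subgraph $K_r$ of $(G,-)$ is balanced exactly when $K_r$ has no odd cycle, which forces $r\le 2$. Since $m\ge 1$, an edge exists, so $\omega_b(\Sigma)=2$.

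Substituting these values into Lemma~\ref{KannanShiva_BoundThm} gives
\[
\lambda_n^2(G)=\lambda_1^2(\Sigma)\le 2\bigl(m-\epsilon_b(G)\bigr)\Bigl(1-\tfrac12\Bigr)=m-\epsilon_b(G),
\]
which is the claimed inequality.

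The only step needing care is the identification $\epsilon(\Sigma)=\epsilon_b(G)$, which depends on matching balance in $(G,-)$ with bipartiteness of the remaining graph. For the equality characterization the paper seems to want (this is why the proof is being included), the plan is to apply Lemma~\ref{KS_Bound_Equal} to each connected component. The case $\omega_b=2$ gives that $(G,-)$ is a balanced complete bipartite graph. This suggests that equality holds if and only if $G$ is complete bipartite, modulo checking that isolated vertices and the handling of several components behave correctly.
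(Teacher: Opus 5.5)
Your proof is correct and is essentially the paper's argument: you pass to $\Sigma=(G,-)$, observe that $\omega_b(\Sigma)=2$ and $\epsilon(\Sigma)=\epsilon_b(G)$, and substitute these into Lemma~\ref{KannanShiva_BoundThm}. Your sketch of the equality case, applying Lemma~\ref{KS_Bound_Equal} component by component and allowing isolated vertices, also matches how the paper proves Lemma~\ref{Fav_Equality}.
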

	\begin{proof}
		Set $\Sigma=(G,-)$ that is, the underlying graph $G$ with all edges given negative sign. By Lemma \ref{KannanShiva_BoundThm}, we get,\\
		\begin{equation*}
			\lambda_n^2(G) =\lambda_1^2(\Sigma)\leq  2(m-\epsilon(\Sigma)) \left(1-\dfrac{1}{\omega_b(\Sigma)}\right) 
		\end{equation*}
		Now in an all negative edged signed graph every triangle is unbalanced, hence $\omega_b(\Sigma)=2$. Moreover, in such a graph every even cycle is balanced, and every odd cycle is unbalanced. Therefore $\Sigma$ is balanced if and only if $G$ contains no odd cycles, that is, $G$ is bipartite. This gives that $\epsilon(\Sigma)=\epsilon_b(G)$. Substituting in the above inequality, we get the desired result.
	\end{proof}
	Lemma \ref{KS_Bound_Equal} can be used to prove the equality cases in Lemma \ref{smallest_eigenvalue}.
	\begin{lemma}\label{Fav_Equality}
		For a graph $G$ having at least one edge, the equality
		\begin{equation}\label{Fav_Eq}
			\lambda_{n}^{2}(G)= m-\epsilon_{b}(G)
		\end{equation}
		holds if and only if $G$ is isomorphic to a complete bipartite graph with at least one edge and possibly some isolated vertices.
	\end{lemma}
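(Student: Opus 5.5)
Set $\Sigma=(G,-)$, as in the proof of Lemma~\ref{smallest_eigenvalue}. Then $\lambda_1(\Sigma)=-\lambda_n(G)$, $\epsilon(\Sigma)=\epsilon_b(G)$ and $\omega_b(\Sigma)=2$, since $G$ has an edge and every triangle of $\Sigma$ is negative. Thus \eqref{Fav_Eq} is exactly equality in Lemma~\ref{KannanShiva_BoundThm} for $\Sigma$ with $\omega_b=2$, namely $\lambda_1^2(\Sigma)=2(m-\epsilon(\Sigma))\cdot\frac12$. The plan is to reduce to connected graphs, apply Lemma~\ref{KS_Bound_Equal}, and translate back.

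\emph{Sufficiency.} Suppose $G=K_{p,q}\cup tK_1$ with $pq\ge 1$. Then $\lambda_n(G)=-\sqrt{pq}$, $m=pq$ and $\epsilon_b(G)=0$, so \eqref{Fav_Eq} holds directly.

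\emph{Necessity.} Let $G_1,\dots,G_r$ be the components of $G$ having at least one edge, with $m_j$ edges each. Since the spectrum of $G$ is the union of the component spectra, $\lambda_n^2(G)=\max_j\lambda_n^2(G_j)$. Edge bipartiteness is additive over components, so $m-\epsilon_b(G)=\sum_j\bigl(m_j-\epsilon_b(G_j)\bigr)$. By Lemma~\ref{smallest_eigenvalue} applied to each $G_j$, we get $\lambda_n^2(G_j)\le m_j-\epsilon_b(G_j)$. Each summand $m_j-\epsilon_b(G_j)$ is strictly positive, because a nonempty graph always has a nonempty bipartite subgraph. Equality in \eqref{Fav_Eq} therefore forces $r=1$, and then equality holds for the unique nontrivial component $G_1$.

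Now $\Sigma_1=(G_1,-)$ is connected with $\omega_b(\Sigma_1)=2$, and it attains equality in \eqref{KannanShiva_Bound}. The second case of Lemma~\ref{KS_Bound_Equal} then says that $\Sigma_1$ is a balanced complete bipartite signed graph, so $G_1$ is complete bipartite. Together with the isolated vertices this gives the stated form of $G$.

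The main obstacle is making sure the reduction to one component is airtight. The key point is the strict positivity of $m_j-\epsilon_b(G_j)$, which holds since $\epsilon_b(G_j)\le m_j/2<m_j$ (take a maximum cut). A second point to check is that the first case of Lemma~\ref{KS_Bound_Equal} cannot occur; this is clear because $\omega_b(\Sigma_1)=2$ exactly.
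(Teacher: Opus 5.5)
Your proof is correct and follows essentially the same route as the paper: split $G$ into components, apply Lemma~\ref{smallest_eigenvalue} to each, use additivity of $m-\epsilon_b$ over components to rule out a second nontrivial component, and apply the $\omega_b=2$ case of Lemma~\ref{KS_Bound_Equal} to $(G_1,-)$. The only differences are the ordering (you first force a single nontrivial component, while the paper first shows the extremal component is complete bipartite) and your max-cut argument for $m_j-\epsilon_b(G_j)>0$, which makes explicit a step the paper only asserts.
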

	\begin{proof}
		The sufficiency part is obvious.\\
		For the necessity part, suppose the equality \eqref{Fav_Eq} holds for a graph $G$. Let $C_1,\cdots, C_k$ be the connected components of $G$ with $m_1, \cdots, m_k$ edges respectively and having edge bipartiteness $\epsilon_b(C_1), \cdots,\epsilon_b(C_k)$ respectively.\\
		Without loss of generality, let $C_1$ be the component of $G$ such that $\lambda_{min}(C_1)=\lambda_n(G)$.
		Applying Lemma \ref{smallest_eigenvalue} on $C_1$ we get $ \lambda_n^2(G)=\lambda_{min}^2(C_1)\leq m_1 - \epsilon_b(C_1) \leq m-\epsilon_b(G)$. But since \eqref{Fav_Eq} holds, we must have $\lambda_{min}^2(C_1)=m_1 - \epsilon_b(C_1)$. From the proof of Lemma \ref{smallest_eigenvalue}, it is clear that 
		this equality holds if and only if $\lambda_1^2(\Sigma_1) = 2(m_1-\epsilon(\Sigma_1))\left(1-\dfrac{1}{\omega_b(\Sigma_1)}\right)$ where $\Sigma_1=(C_1,-)$. By Lemma \ref{KS_Bound_Equal} and the fact that $G$ has at least one edge, this equality holds if and only if $\Sigma_1$ is a balanced complete bipartite graph and hence $C_1$ must be a complete bipartite graph.
		
		On the other hand, again since the equality \eqref{Fav_Eq} holds, we must also have $m_1-\epsilon_b(C_1) = m-\epsilon_b(G)$ But, $m-\epsilon_b(G) =\sum \limits_{i=1}^{k} (m_i-\epsilon_b(C_i))$. Thus we must have $\sum \limits_{i\neq 1} (m_i-\epsilon_b(C_i))=0$ which forces $ (m_i-\epsilon_b(C_i))=0$ for all $i=2,3,\cdots, k$. This happens only if $C_2, \cdots, C_k$ are all isolated vertices. Therefore, $G$ must be a complete bipartite graph with possibly some isolated vertices.
	\end{proof}
	
	We are now ready to illustrate a use of equation \eqref{inequ_adj_one} to provide a sharp bound for the least eigenvalue of the adjacency matrix of a signed graph.\\
	The following result gives a lower bound of $\lambda_{n}(\Sigma)$ involving the number of positive edges, negative edges and edge bipartiteness of the subgraph $\Sigma_+$ and the minimum degree of the subgraph $\Sigma_-$ of  $\Sigma$.
	\begin{theorem}\label{adj_small_three}
		Let $\Sigma$ be a signed graph with spanning subgraph $\Sigma_{+}$ comprising the $m^{+}$ positive edges and spanning subgraph $\Sigma_{-}$ comprising the $m^{-}$ negative edges. Let $\epsilon_{b}(\Sigma_{+})$ be the edge bipartiteness of $\Sigma_{+}$ and $\delta_{-}$ be the minimum degree of the subgraph $-\Sigma_{-}.$ Then
		\begin{equation}\label{Ineq_three}
			\lambda_{n}(\Sigma)\geq  -\sqrt{m^{+}-\epsilon_{b}(\Sigma_{+})}-\frac{\delta_{-}-1}{2}-\sqrt{2m^{-}-n\delta_{-}+\frac{(1+\delta_{-})^{2}}{4}}.
		\end{equation}
		If $m^+ \geq 1$, equality in \eqref{Ineq_three} holds if and only if the vertex set $V(\Sigma)$ can be partitioned as $V(\Sigma)= S_1 \dot{\cup} S_2 \dot{\cup} Z $ with $S_1, S_2 \neq \phi$ such that the following conditions hold:
		\begin{itemize}
			\item [(C1)] $\Sigma_+$ is the disjoint union of a complete bipartite graph $K_{a,b}$, with parts $S_1, S_2$ and $t = \vert Z \vert$ isolated vertices.
			\item [(C2)] $-\Sigma_-$ is $\delta_-$ regular.
			\item [(C3)] No negative edges join $Z$ to $S_1 \cup S_2$.
		\end{itemize}
	\end{theorem}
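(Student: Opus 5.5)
The plan is to combine \eqref{inequ_adj_one} with the two cited upper bounds. By \eqref{inequ_adj_one}, $\lambda_n(\Sigma)\ge \lambda_n(\Sigma_+)-\lambda_1(-\Sigma_-)$. Since $\Sigma_+$ is an unsigned graph with $m^+$ edges, Lemma \ref{smallest_eigenvalue} gives $\lambda_n^2(\Sigma_+)\le m^+-\epsilon_b(\Sigma_+)$, so $\lambda_n(\Sigma_+)\ge -\sqrt{m^+-\epsilon_b(\Sigma_+)}$. This also holds trivially if $m^+=0$. Since $-\Sigma_-$ is an unsigned graph on $n$ vertices with $m^-$ edges and minimum degree $\delta_-$, Theorem \ref{nikiforov_bound} bounds $\lambda_1(-\Sigma_-)$ by $\frac{\delta_--1}{2}+\sqrt{2m^--n\delta_-+\frac{(1+\delta_-)^2}{4}}$. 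If $m^-=0$, then $\delta_-=0$ and the bound equals $-\tfrac12+\tfrac12=0$, which is still valid. Adding the two bounds gives \eqref{Ineq_three}.

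For equality when $m^+\ge1$, all three inequalities used must be tight. These are the Weyl step, Lemma \ref{smallest_eigenvalue} for $\Sigma_+$, and Theorem \ref{nikiforov_bound} for $-\Sigma_-$.
\begin{itemize}
\item By Lemma \ref{Fav_Equality}, tightness of Lemma \ref{smallest_eigenvalue} forces $\Sigma_+\cong K_{a,b}\cup tK_1$, which is (C1).
\item For Nikiforov's bound, I will need the equality characterization of Theorem \ref{nikiforov_bound}. It is not stated in the paper, but it is known from \cite{nikiforov2002some}: equality holds iff the graph is regular, or is a disjoint union of a complete graph and isolated vertices (some care is needed for the degenerate $\delta=0$ case). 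I would try to show that only the $\delta_-$-regular case survives, giving (C2).
\item By Lemma \ref{weyl_ineq}, Weyl equality gives a common vector $x$ with $A(\Sigma_+)x=\lambda_n(\Sigma_+)x$ and $A(\Sigma_-)x=\lambda_n(\Sigma_-)x=-\lambda_1(-\Sigma_-)x$. The least eigenvectors of $K_{a,b}\cup tK_1$ for the eigenvalue $-\sqrt{ab}$ are supported on $S_1\cup S_2$, take constant values $\alpha$ on $S_1$ and $\beta$ on $S_2$, and are zero on $Z$. So $x$ is forced to have exactly this shape.
\item If $-\Sigma_-$ is $\delta_-$-regular, then $x$ being a $\delta_-$-eigenvector of $A(-\Sigma_-)$ with $x|_Z=0$ requires, at every $z\in Z$, the sum of $x$ over the negative neighbours of $z$ to vanish. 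Since $x$ is $\alpha$ on $S_1$ and $\beta$ on $S_2$, this sum is $\alpha$ times the number of such neighbours in $S_1$ plus $\beta$ times the number in $S_2$, with $\alpha,\beta$ of opposite sign. I would like to conclude there are no negative edges from $Z$ to $S_1\cup S_2$, which is (C3).
\end{itemize}
The cleanest way to get (C3) is Perron--Frobenius: on a component of the regular graph $-\Sigma_-$, a $\delta_-$-eigenvector has constant value. I will use this to show that each component of $-\Sigma_-$ meeting $S_1\cup S_2$ avoids $Z$, and that within it $\alpha$ and $\beta$ must agree up to the component constant.

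\textbf{Main obstacle.} This step is the hard part. A component containing vertices of both $S_1$ and $S_2$ would need $\alpha=\beta$, which contradicts the opposite signs. So negative edges can only join vertices within $S_1$ or within $S_2$. These do appear in the conclusion implicitly, since (C1)--(C3) allow them. I must also rule out the non-regular extremal case of Nikiforov's bound. That case would put a complete graph on some vertices and make everything else isolated, so $\delta_-=0$. A top eigenvector supported on that clique then has to equal $x$ up to scaling, which forces restrictive structure. I would check whether this case is actually absorbed by (C2), since with $\delta_-=0$ the regularity claim reads as ``$-\Sigma_-$ is empty''. If it is not, I expect the stated characterization needs this case handled carefully, or excluded by the common-eigenvector condition.

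For sufficiency, assume (C1)--(C3). I will exhibit a vector that attains every bound: $\pm1$ on $S_1$ and $S_2$ respectively, suitably scaled, and $0$ on $Z$. I will check that it is simultaneously a least eigenvector of $A(\Sigma_+)$ and a $-\delta_-$-eigenvector of $A(\Sigma_-)$. This uses regularity and (C3), plus whatever constraints on negative edges within $S_1$ and $S_2$ emerge from the necessity argument. I will then verify that Nikiforov's bound equals $\delta_-$ for $\delta_-$-regular graphs.
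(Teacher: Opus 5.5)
\textbf{There is a genuine gap in the necessity of (C2), and hence of (C3).}

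Your proof of the inequality matches the paper's. Your sufficiency plan is also fine and needs no extra constraints: by (C1) every pair in $S_1\times S_2$ already carries a positive edge. So, using (C3), each $u\in S_1$ has exactly $\delta_-$ negative neighbours, all in $S_1$, and symmetrically for $S_2$.

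The problem is necessity. You derive (C3) from regularity, so (C3) is not established either. You plan to read (C2) off an equality characterization of Nikiforov's bound, but the one you quote is wrong. Non-regular extremal graphs with $\delta\ge 1$ exist:
\begin{itemize}
\item $K_5\cup C_4$: here $n=9$, $m=14$, $\delta=2$, and the bound is $\tfrac12+\sqrt{12.25}=4=\lambda_1$;
\item the wheel $K_1\vee C_5$: the bound is $1+\sqrt6=\lambda_1$.
\end{itemize}
You also leave the $K_k\cup tK_1$ case open. It is easily excluded: its top eigenvector is the clique indicator, which has constant sign, while $x$ changes sign between $S_1$ and $S_2$. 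But even after settling it, your argument would not show that only the regular case survives. The theorem itself is correct; it is your route to (C2) that fails.

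The missing idea, which is how the paper argues, is to use the common eigenvector \emph{before} knowing regularity, and to use equality in Nikiforov's bound only as an identity. Let $\rho=\lambda_1(-\Sigma_-)$.
\begin{enumerate}
\item Take any component $C$ of $-\Sigma_-$ meeting $S_1\cup S_2$. Then $x|_C\neq 0$ and $A(C)x|_C=\rho\,x|_C$. Since $\rho(C)\le\rho$, this forces $\rho(C)=\rho$, so $x|_C$ is a Perron vector of $C$: all entries nonzero and of one sign.
\item Consequently $C$ misses $Z$, which is (C3). It also lies inside $S_1$ or inside $S_2$, and it is $\rho$-regular because $x$ is constant there. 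In particular $a,b\ge\rho+1$.
\item Squaring the equality $\rho=\frac{\delta_--1}{2}+\sqrt{\cdots}$ gives $\rho^2+\rho(1-\delta_-)+\delta_-(n-1)=2m^-$.
\item Let $m_Z$ be the number of negative edges inside $Z$. By (C3) and step 2, $2m^-=(a+b)\rho+2m_Z\ge(a+b)\rho+|Z|\delta_-$.
\item With $n=a+b+|Z|$, steps 3 and 4 combine to $(\rho+1)(\rho-\delta_-)\ge(a+b)(\rho-\delta_-)$. So $\delta_-<\rho$ would force $a+b\le\rho+1<2\rho+2$, contradicting step 2. Hence $\delta_-=\rho$.
\item Every component $C\subseteq Z$ then satisfies $\rho\ge\rho(C)\ge\bar d(C)\ge\delta(C)\ge\delta_-=\rho$, so it is $\rho$-regular too. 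This gives (C2).
\end{enumerate}
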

	\begin{proof}
		
		Applying Lemma \ref{smallest_eigenvalue} to the graph $\Sigma_{+}$ and Theorem \ref{nikiforov_bound} to the graph $-\Sigma_{-}$, we obtain 
		\begin{equation}\label{Fav_Sigma+}
			\lambda_{n}(\Sigma_{+})\geq -\sqrt{m^{+}-\epsilon_{b}(\Sigma_{+})}
		\end{equation}
		and 
		\begin{equation}\label{Niki_Bound_}
			-\lambda_{1}(-\Sigma_{-})\geq -\frac{\delta_{-}-1}{2} - \sqrt{2m^{-}-n\delta_{-}+\frac{(1+\delta_{-})^2}{4}}.
		\end{equation}
		Substituting these in \eqref{inequ_adj_one}, we get
		\[\lambda_{n}(\Sigma)\geq  -\sqrt{m^{+}-\epsilon_{b}(\Sigma_{+})}-\frac{\delta_{-}-1}{2}-\sqrt{2m^{-}-n\delta_{-}+\frac{(1+\delta_{-})^{2}}{4}}.\]
		
		Now we shall prove the equality case. \\
		Suppose $(C1), (C2), (C3)$ hold. Define $x \in \mathbb{R}^n$ as follows:\\
		$$
		x_w =
		\begin{cases}
			\frac{1}{\sqrt{2a}}, & \text{if } w \in S_1,\\
			\dfrac{-1}{\sqrt{2b}}, & \text{if } w \in S_2,\\
			0, & \text{if } w\in Z. \\
		\end{cases}
		$$
		Fix $u \in S_1$. The positive neighbors of $u$ are the $b$ vertices in $S_2$. By $(C1)$, the negative neighbors of $u$ cannot be in $S_2$ and by $(C3)$ those cannot be in $Z$ as well. By $(C2)$ there are exactly $\delta_-$ negative neighbors of $u$ all of which should lie in $S_1$. Therefore, we have,
		\begin{equation*}
			(Ax)_u= \sum \limits_{v \in N^+_i}x_v -  \sum \limits_{v \in N^-_i}x_v=\dfrac{-b}{\sqrt{2b}}-\dfrac{\delta_-}{\sqrt{2a}}=(-\sqrt{ab}-\delta_-)x_u
		\end{equation*}
		Symmetrically, we can show that for $u \in S_2$
		\begin{equation*}
			(Ax)_u=(-\sqrt{ab}-\delta_-)x_u
		\end{equation*}
		Also for $u \in Z$ using $(C1)$ and $(C3)$ we easily get that
		\begin{equation*}
			(Ax)_u=(-\sqrt{ab}-\delta_-)x_u=0
		\end{equation*}
		Therefore $(-\sqrt{ab}-\delta_-)$ is an eigenvalue of $\Sigma$ which forces $\lambda_n(\Sigma) \leq (-\sqrt{ab}-\delta_-)$.\\
		On the other hand, by $(C1)$
		, $\epsilon_b(\Sigma_+)=0, m^+=ab$ and by $(C2)$, $2m^-=n\delta_-$. Therefore, inequality \eqref{Ineq_three} yields,
		\begin{align*}
			\lambda_n(\Sigma) &\geq -\sqrt{ab} -\dfrac{\delta_--1}{2}-\sqrt{\dfrac{(1+\delta_-)^2}{4}}\\
			&= -\sqrt{ab} -\dfrac{\delta_--1}{2}-\dfrac{(1+\delta_-)}{2}=(-\sqrt{ab}-\delta_-)
		\end{align*}
		This forces $\lambda_n(\Sigma) = (-\sqrt{ab}-\delta_-)$ and the equality in \eqref{Ineq_three} holds.\\
		Conversely, suppose \eqref{Ineq_three} is an equality. Then equality must hold in \eqref{inequ_adj_one}, \eqref{Fav_Sigma+}, \eqref{Niki_Bound_}.
		
		Since $m^+ \geq 1$ and \eqref{Fav_Sigma+} holds, by Lemma \ref{Fav_Equality}, we get that $\Sigma_+ \cong K_{a,b} \dot{\cup} tK_1$ with $a,b \geq 1, t \geq 0$. Write $S_1, S_2$ for the parts forming the complete bipartite structure and call the set of $t$ isolated vertices as $Z$. Then $\vert S_1 \vert =a, \vert S_2 \vert =b$. This is $(C1)$. 
		
		Clearly $\lambda_n(\Sigma_+) = -\sqrt{ab}$ and is a simple eigenvalue with eigenvector $x$ as defined above.\\
		Since \eqref{inequ_adj_one} holds with equality by Lemma \ref{weyl_ineq}, there is a unit eigenvector $y$ with $A(\Sigma_+) y =-\sqrt{ab}. y$ and $A(-\Sigma_-)y = \rho y$ where $\rho=\lambda_1(-\Sigma_-)$. By simplicity of $\lambda_n(\Sigma_+)$, $y =\pm x$, so $x$ itself is an eigenvector of $A(-\Sigma_-)$ corresponding to $\rho$.
		
		Let $C$ be a component of $-\Sigma_-$ satisfying $C\cap(S_1 \cup S_2) \neq \phi$. Let $x_{|C}$ be the vector $x$ restricted to vertices in $C$. Then it is easy to see that, $x_{|C} \neq 0$ and $A(C) x_{|C}= \rho x_{|C} $. This gives $\rho \leq \rho(C)$ and since $C$ is a component of $-\Sigma_-$ we must have that $\rho(C) \leq \rho$. This forces $\rho= \rho(C)$. Using Perron Frobenius Theorem on the connected graph $C$, we get that all entries of $x_{|C}$ must be of same sign and non-zero. This forces $C \cap Z = \phi$ which is $(C3)$. Moreover, the condition that the entries of $x_{|C}$ are of the same sign forces that $C$ must completely lie in $S_1$ or in $S_2$. By the structure of $x$ in $S_1$ and $S_2$, $x$ is forced to be constant vector and hence $A(C) \mathbf{1}=\rho \mathbf{1}$ which implies that $C$ is a $\rho$- regular graph. Thus, we have proved that for a component $C$ of $-\Sigma_-$ with $C \cap (S_1 \cup S_2)$, it must be a $\rho$- regular graph completely lying in $S_1$ or $S_2$.
		
		Let $u \in S_1$ . If $C$ is the component of $-\Sigma_-$ containing $u$ then by the previous argument $C$ must be contained completely in $S_1$ and $C$ is $\rho$-regular. Therefore $\rho+1 \leq \vert S_1 \vert =a$. Similarly we can argue that $\rho+1 \leq b$. These inequalities give that 
		\begin{equation}\label{contradic}
			a+b \geq 2\rho+2
		\end{equation}
		\textbf{Claim:} $\delta_-= \rho$ \\
		\textit{Proof:} Clearly $\rho \geq \dfrac{2m^-}{n} \geq \delta_-$. Suppose $\delta_- \neq \rho$ and let $d:= \delta_-<\rho.$\\
		Since $\forall u \in S_1 \cup S_2$, $d_{-\Sigma_-}(u) =\rho$, $d$ is attained by a vertex in $Z$ and hence $Z \neq \phi$. Since every vertex in $Z$ has degree at least $d$ in $-\Sigma_-$, setting $m_Z:= \vert E(-\Sigma_-[Z])\vert$, we get that, $2m_Z\geq \vert Z \vert d$. Therefore, counting negative edges in $\Sigma$, we get $2m^-=(a+b)\rho+2m_Z$. Now since \eqref{Niki_Bound_} holds with equality we must have , 
		\begin{align*}
			\rho = \dfrac{d-1}{2}+ &\sqrt{2m^{-}-nd +\dfrac{(d+1)^2}{4}}\\
			\rho-\dfrac{(d-1)}{2}&=\sqrt{2m^--nd +\dfrac{(d+1)^2}{4}}\\
			\rho^2+\dfrac{(d-1)^2}{4}-&\rho(d-1)= 2m^--nd+\dfrac{(1+d)^2}{4}\\
			\rho^2+\rho(1-d)+&\dfrac{(d-1)^2}{4}-\dfrac{(d+1)^2}{4}+nd=2m^-\\
			\rho^2+\rho(1-d)+&d(n-1)=2m^- \\
		\end{align*}
		
		With $n=a+b+\vert Z \vert$ we get,
		\begin{equation*}
			\rho^{2}+\rho-d\rho+(a+b)d+|Z|d-d
			=(a+b)\rho+2m_Z
			\ge (a+b)\rho+|Z|d.
		\end{equation*}
		Cancelling $\vert Z \vert d$ and rearranging we get,
		\begin{equation*}
			\rho^{2}+\rho-d\rho-d
			\ge (a+b)(\rho-d),
			\quad \text{i.e.,} \quad
			(\rho+1)(\rho-d)
			\ge (a+b)(\rho-d).
		\end{equation*}
		Since $\rho -d > 0$ we must have $(a+b) \leq \rho+1$ contradicting \eqref{contradic}. Therefore $\delta_-=\rho$.
		\newline
		Finally, every component $C\subseteq Z$ of $-\Sigma^{-}$ satisfies
		\[
		\rho \ge \rho(C) \ge \bar{d}(C) \ge \delta(C) \ge \delta^{-}=\rho,
		\]
		where $\bar{d}(C)$ is the average degree; so all these are equalities and
		$C$ is $\rho$-regular. Thus $-\Sigma^{-}$ is $\rho$-regular that is $\delta_-$-regular  throughout,
		which is (C2).
	\end{proof}
	
	In the following result, we establish another lower bound for the smallest eigenvalue $\lambda_n(\Sigma)$ of a signed graph $\Sigma$.

	\begin{theorem}\label{adj_small_one}
		Let $\Sigma$ be a signed graph with at least one negative edge and let $\Sigma_{+}$ and $\Sigma_{-}$ denote the spanning subgraphs consisting only of the positive and negative edges of $\Sigma$, respectively. Suppose that $\Sigma_{+}$ contains $m^{+}$ positive edges and $\Sigma_{-}$ contains $m^{-}$ negative edges. Let $\epsilon_{b}(\Sigma_{+})$ denote the edge bipartiteness of $\Sigma_{+}$, and let $\omega(-\Sigma_{-})$ denote the clique number of the unsigned graph $-\Sigma_{-}$. Then
		\[
		\lambda_{n}(\Sigma)\geq -\sqrt{m^{+}-\epsilon_{b}(\Sigma_{+})}
		-\sqrt{2m^{-}\left(1-\frac{1}{\omega(-\Sigma_{-})}\right)}.
		\]
		
		Equality holds if $\Sigma=(K_{n},-)$,  or if $\Sigma=(K_{p,q},-)$, where $p+q=n$.
		
	\end{theorem}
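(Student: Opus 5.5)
The inequality follows the same template as Theorem \ref{adj_small_three}: start from the Weyl-type estimate \eqref{inequ_adj_one},
\[
\lambda_n(\Sigma)\ \ge\ \lambda_n(\Sigma_+)-\lambda_1(-\Sigma_-),
\]
and bound the two terms on the right separately.

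\textbf{Bounding the two terms.}
\begin{enumerate}
\item For the positive part I apply Lemma \ref{smallest_eigenvalue} to the unsigned graph $\Sigma_+$. This gives $\lambda_n(\Sigma_+)\ge -\sqrt{m^+-\epsilon_b(\Sigma_+)}$ when $m^+\ge 1$. When $m^+=0$ the bound is trivial, since then $\lambda_n(\Sigma_+)=0$.
\item For the negative part, $-\Sigma_-$ is an unsigned graph with $m^-\ge 1$ edges. Theorem \ref{Niki_largest} gives
\[
\lambda_1^2(-\Sigma_-)\le 2m^-\Bigl(1-\frac{1}{\omega(-\Sigma_-)}\Bigr).
\]
Since $\lambda_1(-\Sigma_-)\ge 0$, taking square roots yields $-\lambda_1(-\Sigma_-)\ge -\sqrt{2m^-(1-1/\omega(-\Sigma_-))}$.
\end{enumerate}
Substituting both estimates into \eqref{inequ_adj_one} gives the stated inequality.

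\textbf{Equality cases.} Here only sufficiency is claimed, so I would verify the two families directly.
\begin{itemize}
\item For $\Sigma=(K_n,-)$: $m^+=0$, $m^-=n(n-1)/2$ and $\omega(-\Sigma_-)=n$. The right-hand side is $-\sqrt{n(n-1)\cdot\frac{n-1}{n}}=-(n-1)$. The spectrum of $-A(K_n)$ is $\{-(n-1),1^{(n-1)}\}$, so $\lambda_n(\Sigma)=-(n-1)$ and equality holds.
\item For $\Sigma=(K_{p,q},-)$: the spectrum is symmetric because the graph is bipartite, so $\lambda_n(\Sigma)=-\sqrt{pq}$. With $m^-=pq$ and $\omega=2$, the bound equals $-\sqrt{2pq\cdot\frac12}=-\sqrt{pq}$, giving equality.
\end{itemize}

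\textbf{Expected difficulty.} The main point needing care is the degenerate case $m^+=0$. There Lemma \ref{smallest_eigenvalue} is stated for $m\ge 1$, so I handle it by the direct observation $\lambda_n(\Sigma_+)=0$ noted above. One should also check that $\omega(-\Sigma_-)\ge 2$, which holds because at least one negative edge exists; this makes the Nikiforov bound meaningful. Beyond these checks the argument is routine.
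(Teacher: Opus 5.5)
Your proof is correct and follows the paper's argument: the Weyl estimate \eqref{inequ_adj_one}, Lemma \ref{smallest_eigenvalue} applied to $\Sigma_+$, Theorem \ref{Niki_largest} applied to $-\Sigma_-$, and a direct check of the two equality families. Your separate treatment of $m^+=0$ is a worthwhile addition, because both equality examples have $m^+=0$, and the paper applies Lemma \ref{smallest_eigenvalue} in that case without noting its hypothesis $m\geq 1$.
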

	\begin{proof}
		
		Applying Lemma \ref{smallest_eigenvalue} to the unsigned graph $\Sigma_{+}$ and Theorem \ref{Niki_largest} to the unsigned graph $-\Sigma_{-}$, we obtain $$\lambda_{n}(\Sigma_{+})\geq -\sqrt{m^{+}-\epsilon_{b}(\Sigma_{+})}$$
		and $$-\lambda_{1}(-\Sigma_{-})\geq -\sqrt{2m^{-}\Big(1-\frac{1}{\omega(-\Sigma_{-})}\Big)}.$$
		Substituting these estimates in the inequality \eqref{inequ_adj_one}, we obtain
		\[\lambda_{n}(\Sigma)\geq  -\sqrt{m^{+}-\epsilon_{b}(\Sigma_{+})}-\sqrt{2m^{-}\Big(1-\frac{1}{\omega(-\Sigma_{-})}\Big)}.\]
		
		If $\Sigma=(K_{n}, -)$ or $\Sigma=(K_{p,q}, -)$, then it is easy to see that the equality holds.
	\end{proof}
	
	\begin{corollary}\label{adj_small_two}
		Let $\Sigma=(G,\sigma)$ be a signed graph with at least one positive edge and at least one negative edge. Let $\Sigma_{+}$ and $\Sigma_{-}$ denote the spanning subgraphs of $\Sigma$ induced by the positive and negative edges, respectively, with $m^{+}$ and $m^{-}$ edges. Let $\omega(\Sigma_{+})$ and $\omega(-\Sigma_{-})$ denote the clique numbers of $\Sigma_{+}$ and $-\Sigma_{-}$, respectively. Then
		\[
		\lambda_{n}(\Sigma)\geq
		-\sqrt{2m^{+}\left(1-\frac{1}{\omega(\Sigma_{+})}\right)}
		-\sqrt{2m^{-}\left(1-\frac{1}{\omega(-\Sigma_{-})}\right)}.
		\]
		
	\end{corollary}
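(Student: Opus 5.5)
Starting from the decomposition $A(\Sigma)=A(\Sigma_+)+A(\Sigma_-)$, Lemma \ref{weyl_ineq} gives inequality \eqref{inequ_adj_one}:
\[
\lambda_n(\Sigma)\geq \lambda_n(\Sigma_+)-\lambda_1(-\Sigma_-).
\]
Both matrices $A(\Sigma_+)$ and $A(-\Sigma_-)$ are adjacency matrices of unsigned graphs, so the remaining task is to bound the two terms separately. The negative-edge term is treated exactly as in Theorem \ref{adj_small_one}. Applying Theorem \ref{Niki_largest} to the unsigned graph $-\Sigma_-$, which has $m^-\geq 1$ edges, gives
\[
-\lambda_1(-\Sigma_-)\geq -\sqrt{2m^-\bigl(1-1/\omega(-\Sigma_-)\bigr)}.
\]

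For the positive term, Lemma \ref{smallest_eigenvalue} is replaced by a clique-number bound on $|\lambda_n(\Sigma_+)|$. Since $\Sigma_+$ is an unsigned graph, Perron--Frobenius gives $|\lambda_n(\Sigma_+)|\le \rho(\Sigma_+)=\lambda_1(\Sigma_+)$. Theorem \ref{Niki_largest}, applied to $\Sigma_+$ with its $m^+\geq 1$ edges, then yields
\[
\lambda_1^2(\Sigma_+)\le 2m^+\bigl(1-1/\omega(\Sigma_+)\bigr),
\]
and hence $\lambda_n(\Sigma_+)\ge -\sqrt{2m^+(1-1/\omega(\Sigma_+))}$. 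The same estimate can also be obtained without Perron--Frobenius: view $\Sigma_+$ as the signed graph $(\Sigma_+,+)$ and apply Lemma \ref{KannanShiva_BoundThm} to the spectral radius. Substituting both estimates into \eqref{inequ_adj_one} gives the claimed bound.

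The only step needing care is the passage from $\lambda_n$ to $\lambda_1$ for $\Sigma_+$, namely the inequality $|\lambda_n(G)|\le\lambda_1(G)$ for a nonnegative symmetric matrix. This is standard. The hypotheses of at least one positive and at least one negative edge guarantee $m^\pm\ge1$, so both clique numbers are at least $2$ and the expressions under the square roots are well defined. In the degenerate case of a signed graph with no edges of one sign, the corresponding term would simply vanish, but the corollary as stated excludes this case.
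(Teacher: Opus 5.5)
Your argument is correct, but it handles the positive part differently from the paper.

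\textbf{The paper's route.} The paper never bounds $\lambda_n(\Sigma_+)$ through $\lambda_1(\Sigma_+)$. It derives the corollary from Theorem~\ref{adj_small_one}, whose positive term comes from the edge-bipartiteness bound $\lambda_n^2(\Sigma_+)\le m^+-\epsilon_b(\Sigma_+)$ of Lemma~\ref{smallest_eigenvalue}. It then observes that $\omega(\Sigma_+)\ge 2$ gives
$m^+-\epsilon_b(\Sigma_+)\le m^+\le 2m^+\bigl(1-1/\omega(\Sigma_+)\bigr)$.

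\textbf{Your route.} You use $|\lambda_n(\Sigma_+)|\le\lambda_1(\Sigma_+)$ (Perron--Frobenius) and apply Nikiforov's Theorem~\ref{Niki_largest} directly to $\Sigma_+$.

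\textbf{Comparison.} The paper's route makes explicit that the corollary is a formal weakening of Theorem~\ref{adj_small_one}: its lower bound on $\lambda_n(\Sigma)$ is never sharper. The price is reliance on the frustration-index machinery behind Lemma~\ref{smallest_eigenvalue}. Your route is self-contained, needing only Weyl's inequality, Perron--Frobenius and Nikiforov's clique bound applied to each of $\Sigma_+$ and $-\Sigma_-$.

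\textbf{A side remark to fix.} Your alternative ``without Perron--Frobenius'' does not work as stated. Lemma~\ref{KannanShiva_BoundThm} bounds $\lambda_1^2$, not the spectral radius. For $(\Sigma_+,+)$ one has $\epsilon=0$ and $\omega_b=\omega(\Sigma_+)$, so the lemma just reproduces Nikiforov's bound on $\lambda_1(\Sigma_+)$. You would still need $|\lambda_n|\le\lambda_1$. To control $-\lambda_n(\Sigma_+)=\lambda_1(-\Sigma_+)$ through that lemma, you would apply it to $(\Sigma_+,-)$, where $\omega_b=2$ and $\epsilon=\epsilon_b(\Sigma_+)$. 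That is exactly Lemma~\ref{smallest_eigenvalue}, i.e.\ the paper's route.
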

	\begin{proof}
		Note that since $\Sigma_+$ has at least one edge, $\omega(\Sigma_+) \geq 2$ which gives $2m^{+}\left(1-\frac{1}{\omega(\Sigma_{+})}\right) \geq m^+ \geq m^+ - \epsilon_b(\Sigma_+)$. Now the result follows from Theorem \ref{adj_small_one}.
	\end{proof}
	Next, we use Nikiforov's idea in Corollary \ref{adj_small_two} and get the following result.
	\begin{corollary}
		Let $\Sigma$ be a signed graph with $n$ vertices and $m$ edges. Let $m^{+}$ and $m^{-}$ denote the number of positive edges and negative edges in $\Sigma.$ Then
		\[\lambda_{n}(\Sigma)\geq  -\sqrt{2m^{+}\Bigg(1-\Big\lfloor\frac{1}{2}+\sqrt{2m^{+}+\frac{1}{4}}\Big\rfloor^{-1}\Bigg)}-\sqrt{2m^{-}\Bigg(1-\Big\lfloor\frac{1}{2}+\sqrt{2m^{-}+\frac{1}{4}}\Big\rfloor^{-1}\Bigg)}.\]
	\end{corollary}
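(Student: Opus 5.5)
The plan is to start from Corollary \ref{adj_small_two} and bound each clique number from above by a function of the number of edges alone. The key observation, which is Nikiforov's idea, is that the function $\omega \mapsto 1-\frac{1}{\omega}$ is increasing in $\omega$. So any upper bound on $\omega(\Sigma_+)$ in terms of $m^+$, and on $\omega(-\Sigma_-)$ in terms of $m^-$, gives an upper bound on each radicand. That in turn gives a lower bound on each term $-\sqrt{\cdot}$.

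The first step is the clique bound. If an unsigned graph $H$ with $e$ edges has clique number $\omega$, then it contains $K_\omega$, so $\binom{\omega}{2}\le e$. This means $\omega^2-\omega-2e\le 0$, which gives $\omega \le \frac12+\sqrt{2e+\frac14}$. Since $\omega$ is an integer, we get $\omega \le \big\lfloor \frac12+\sqrt{2e+\frac14}\big\rfloor$. Apply this with $H=\Sigma_+$, $e=m^+$, and with $H=-\Sigma_-$, $e=m^-$. Monotonicity then gives
\[
2m^{\pm}\Big(1-\frac{1}{\omega}\Big)\le 2m^{\pm}\Big(1-\Big\lfloor\frac{1}{2}+\sqrt{2m^{\pm}+\frac{1}{4}}\Big\rfloor^{-1}\Big).
\]
Substituting into Corollary \ref{adj_small_two} yields the statement.

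The main obstacle is the degenerate case, since Corollary \ref{adj_small_two} assumes at least one positive and at least one negative edge, while the present statement does not.

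If $m^-=0$ but $m^+\ge 1$, the second term vanishes, because $\lfloor \frac12+\sqrt{1/4}\rfloor=1$ makes the factor zero. In this case I would argue directly. Inequality \eqref{inequ_adj_one} gives $\lambda_n(\Sigma)\ge \lambda_n(\Sigma_+)$. Since $\lambda_n(\Sigma_+)\ge -\lambda_1(\Sigma_+)$, Theorem \ref{Niki_largest} gives $\lambda_n(\Sigma_+)\ge -\sqrt{2m^+(1-1/\omega(\Sigma_+))}$, and the clique bound above finishes it.

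The case $m^+=0$ is handled symmetrically using Theorem \ref{Niki_largest} on $-\Sigma_-$. If both are zero, both sides equal $0$.

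I would also check that $\omega\ge 1$ in every case, so the reciprocals are defined.
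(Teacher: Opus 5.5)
Your proposal is correct and follows the paper's proof: you bound each clique number by $\lfloor \frac12+\sqrt{2e+\frac14}\rfloor$ via $\binom{\omega}{2}\le e$, then substitute into Corollary \ref{adj_small_two}, and you also make explicit the monotonicity of $1-1/\omega$ that this substitution uses. Your separate handling of the cases $m^+=0$ or $m^-=0$ is a worthwhile addition that the paper's proof omits, since Corollary \ref{adj_small_two} requires at least one edge of each sign and the statement does not.
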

	\begin{proof}
		Let $\omega(\Sigma_{+})$ be the clique number of signed subgraph $\Sigma_{+}$. Then \[\binom{\omega(\Sigma_{+})}{2}\leq m^{+}.\]  This inequality gives us \[\omega(\Sigma_{+})\leq \frac{1}{2}+\sqrt{2m^{+}+\frac{1}{4}}.\]
		Since $\omega(\Sigma_{+})$ is an integer, so \[\omega(\Sigma_{+})\leq \Big\lfloor\frac{1}{2}+\sqrt{2m^{+}+\frac{1}{4}}\Big\rfloor.\]
		Similarly, \[\omega(-\Sigma_{-})\leq \Big\lfloor\frac{1}{2}+\sqrt{2m^{-}+\frac{1}{4}}\Big\rfloor.\]
		Substituting these into Corollary \ref{adj_small_two}, we get the result.
	\end{proof}

	If $\Sigma$ is a signed graph, then we get the following corollaries using Stanley's bound \cite{stanley1987bound} and Hong's bound \cite{yuan1988bound} respectively.

	\begin{corollary}
		Let $\Sigma$ be a signed graph with subgraph $\Sigma_{+}$ consisting of all $m^{+}$ positive edges and subgraph $\Sigma_{-}$ consisting of all $m^{-}$ negative edges. Let $\epsilon_{b}(\Sigma_{+})$ be the edge bipartiteness of $\Sigma_{+}.$ Then
		\[\lambda_{n}(\Sigma)\geq \frac{1}{2} -\sqrt{m^{+}-\epsilon_{b}(\Sigma_{+})}-\sqrt{2m^{-}+\frac{1}{4}}.\]
		Equality holds if either $\Sigma=(K_{n}, -)$ or $\Sigma=(K_{\lfloor \frac{n}{2}\rfloor, \lceil\frac{n}{2}\rceil}, +).$
	\end{corollary}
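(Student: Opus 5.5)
The plan is to follow the template of Theorem~\ref{adj_small_three} and Theorem~\ref{adj_small_one}. We start from inequality \eqref{inequ_adj_one},
\[
\lambda_n(\Sigma)\ge \lambda_n(\Sigma_+)-\lambda_1(-\Sigma_-),
\]
and bound the two terms separately. For the positive part we use Lemma~\ref{smallest_eigenvalue}, applied to the unsigned graph $\Sigma_+$, which gives $\lambda_n(\Sigma_+)\ge -\sqrt{m^+-\epsilon_b(\Sigma_+)}$. (If $m^+=0$, this holds trivially since $\lambda_n(\Sigma_+)=0$.) For the negative part we apply Stanley's bound, Theorem~\ref{stanley_bound}, to the unsigned graph $-\Sigma_-$, which has $m^-$ edges:
\[
\lambda_1(-\Sigma_-)\le -\tfrac12+\sqrt{2m^-+\tfrac14}.
\]
Substituting both bounds into \eqref{inequ_adj_one} gives
\[
\lambda_n(\Sigma)\ge \tfrac12-\sqrt{m^+-\epsilon_b(\Sigma_+)}-\sqrt{2m^-+\tfrac14},
\]
which is the claimed inequality.

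The statement only asserts that equality holds in two cases, so it remains to check those by direct computation.

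\emph{Case $\Sigma=(K_n,-)$.} Here $m^+=0$ and $m^-=\binom n2$. Then $2m^-+\tfrac14=(n-\tfrac12)^2$, so the right-hand side equals
\[
\tfrac12-\bigl(n-\tfrac12\bigr)=1-n.
\]
The spectrum of $(K_n,-)$ is the negative of the spectrum of $K_n$, so $\lambda_n(\Sigma)=-(n-1)$, and equality holds.

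\emph{Case $\Sigma=(K_{\lfloor n/2\rfloor,\lceil n/2\rceil},+)$.} Here $m^-=0$, $\epsilon_b(\Sigma_+)=0$ and $m^+=\lfloor n/2\rfloor\lceil n/2\rceil$. The right-hand side is then
\[
\tfrac12-\sqrt{m^+}-\tfrac12=-\sqrt{\lfloor n/2\rfloor\lceil n/2\rceil}.
\]
This equals the least eigenvalue of the complete bipartite graph, namely $-\sqrt{pq}$ with $p=\lfloor n/2\rfloor$ and $q=\lceil n/2\rceil$. In this case the Stanley term contributes exactly $0$, because $-\tfrac12+\sqrt{1/4}=0$.

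There is no real obstacle here. The only care needed is the degenerate cases $m^+=0$ or $m^-=0$, where one must confirm that Lemma~\ref{smallest_eigenvalue} and Theorem~\ref{stanley_bound} still hold trivially (the corresponding matrix is zero, and both bounds evaluate to $0$). The argument does not rely on Lemma~\ref{smallest_eigenvalue} requiring at least one edge.
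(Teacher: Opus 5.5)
Your proof is correct and follows the paper's argument exactly: Weyl's inequality \eqref{inequ_adj_one}, then Lemma~\ref{smallest_eigenvalue} applied to $\Sigma_+$ and Stanley's bound (Theorem~\ref{stanley_bound}) applied to $-\Sigma_-$. Your explicit checks of the two equality cases, and of the degenerate cases $m^+=0$ or $m^-=0$, fill in what the paper simply calls obvious.
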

	
	\begin{proof}
		By applying Lemma \ref{smallest_eigenvalue} on $\Sigma_{+}$, we get $\lambda_{n}(\Sigma_{+})\geq -\sqrt{m^{+}-\epsilon_{b}(\Sigma_{+})}.$
		By applying Theorem on $-\Sigma_{-}$ \ref{stanley_bound}, we get $-\lambda_{1}(-\Sigma_{-})\geq \frac{1}{2} -\sqrt{2m^{-}+\frac{1}{4}}.$
		By substituting these bounds in inequality (\ref{inequ_adj_one}), we get
		\[\lambda_{n}(\Sigma)\geq \frac{1}{2} -\sqrt{m^{+}-\epsilon_{b}(\Sigma_{+})}-\sqrt{2m^{-}+\frac{1}{4}}.\]
		
		If $\Sigma=(K_{n}, -)$ or $\Sigma=(K_{\lfloor \frac{n}{2}\rfloor, \lceil\frac{n}{2}\rceil}, +)$, then equality is obvious.
	\end{proof}
	
	\begin{corollary}
		Let $\Sigma$ be a signed graph with subgraph $\Sigma_{+}$ consisting of all $m^{+}$ positive edges and subgraph $\Sigma_{-}$ consisting of all $m^{-}$ negative edges. Let $\epsilon_{b}(\Sigma_{+})$ be the edge bipartiteness of $\Sigma_{+}.$ If the subgraph $\Sigma_{-}$ is connected, then
		\[\lambda_{n}(\Sigma)\geq -\sqrt{m^{+}-\epsilon_{b}(\Sigma_{+})}-\sqrt{2m^{-}-n+1}.\]
		Equality holds if $\Sigma=(K_{n}, -)$.
	\end{corollary}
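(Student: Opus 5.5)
Apply inequality \eqref{inequ_adj_one}, $\lambda_n(\Sigma)\ge \lambda_n(\Sigma_+)-\lambda_1(-\Sigma_-)$, as in the previous corollaries, and bound each term separately. For the positive part, Lemma \ref{smallest_eigenvalue} applied to the unsigned graph $\Sigma_+$ gives $\lambda_n(\Sigma_+)\ge -\sqrt{m^+-\epsilon_b(\Sigma_+)}$; if $m^+=0$ this is trivial, since then $\lambda_n(\Sigma_+)=0$ and the right side is $0$. For the negative part, $-\Sigma_-$ is an unsigned graph whose edges are exactly the $m^-$ negative edges of $\Sigma$, but it is a spanning subgraph, so it has $n$ vertices. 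By hypothesis $\Sigma_-$ is connected, so Hong's bound \cite{yuan1988bound}, $\lambda_1(G)\le\sqrt{2m-n+1}$ for a connected graph with $n$ vertices and $m$ edges, applies and gives $\lambda_1(-\Sigma_-)\le\sqrt{2m^--n+1}$. Substituting both bounds into \eqref{inequ_adj_one} yields the stated inequality.

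The only point needing care is that Hong's bound requires connectedness and no isolated vertices. Since $\Sigma_-$ is a connected spanning subgraph, every vertex is incident to a negative edge when $n\ge2$, so the hypothesis is exactly what is needed. Its $n$ equals the $n$ of $\Sigma$, so the expression $2m^--n+1$ is the right one. If $n=1$ everything is $0$ and the claim is trivial.

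For the equality claim, take $\Sigma=(K_n,-)$. Then $m^+=0$, so the first term vanishes, and $m^-=n(n-1)/2$, so $2m^--n+1=(n-1)^2$ and the bound reads $\lambda_n(\Sigma)\ge-(n-1)$. The spectrum of $(K_n,-)$ is the negative of that of $K_n$, namely $\{1^{(n-1)},-(n-1)\}$, so $\lambda_n=-(n-1)$ and equality holds.

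I do not expect a real obstacle here. The statement only asserts sufficiency ("if"), so no characterization is required. The one step to state explicitly is the verification that Hong's hypotheses (connected, $n$ counted over all of $V$) are met by the spanning subgraph $-\Sigma_-$.
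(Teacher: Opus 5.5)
Your proposal is correct and follows the argument the paper intends. The paper gives no separate proof, but it obtains the corollary the same way: combine \eqref{inequ_adj_one} with Lemma \ref{smallest_eigenvalue} for $\Sigma_+$ and Hong's bound $\lambda_1(-\Sigma_-)\le\sqrt{2m^--n+1}$ for the connected spanning graph $-\Sigma_-$, then check $(K_n,-)$ directly. Your treatment of $m^+=0$ and your check that Hong's bound uses the full vertex count $n$ are the right details to make explicit (strictly, Hong's bound only needs no isolated vertices, which connectedness gives for $n\ge 2$).
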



	
	\section*{Acknowledgments}
	The first author acknowledges support from the DST INSPIRE Fellowship (IF220692). The second and third authors acknowledge financial support from the ANRF-CRG grant (File Number: CRG/2023/002747). The authors acknowledge the use of ChatGPT for improving the exposition of the manuscript, refining some of the proofs and constructing the example. All mathematical statements have been independently verified by the authors, who are solely responsible for the correctness of the results presented in this paper.
	
	\section*{Data Availability} No data were generated or analysed in this study, as the work is entirely theoretical. 
	
	\section*{Conflict of interest:} 
	The authors state that they possess no conflicts of interest.
	\bibliographystyle{amsplain}
	\bibliography{Paper1ref}
	
	\vspace{0.4cm}
	\affl{Abhay Jayarajan}{ma23resch02001@iith.ac.in, abhayjayarajan@gmail.com}{ Department of Mathematics, Indian Institute of Technology Hyderabad, Kandi, Sangareddy 502284, India.}
	
	\affl{M. Rajesh Kannan}{rajeshkannan@math.iith.ac.in, rajeshkannan1.m@gmail.com}{Department of Mathematics, Indian Institute of Technology Hyderabad, Kandi, Sangareddy 502284, India.}
	
	\affl{Priti Prasanna Mondal}{priti.prasanna@prjt.math.iith.ac.in, pritiprasanna1992@gmail.com}{Department of Mathematics, Indian Institute of Technology Hyderabad, Kandi, Sangareddy 502284, India.}
	
	\affl{Shivaramakrishna Pragada}{shivaramakrishna\_pragada@sfu.ca, shivaramkratos@gmail.com}{Department of Mathematics, Simon Fraser University, Burnaby, Canada}
\end{document}